\numberwithin{equation}{section}
\newtheorem{theorem}{Theorem}[section]
\newtheorem{lemma}[theorem]{Lemma}
\newtheorem{corollary}[theorem]{Corollary}
\theoremstyle{definition}
\newtheorem{definition}[theorem]{Definition}
\newtheorem{example}[theorem]{Example}
\newcommand{\cali}[1]{\mathscr{#1}}
\newcommand{\Div}{{\rm Div}}
\newcommand{\supp}{{\rm supp}}
\newcommand{\ddc}{{dd^c}}
\newcommand{\dbar}{{\overline\partial}}
\newcommand{\id}{{\rm id}}
\newcommand{\reg}{{\rm reg}}
\newcommand{\met}{{\rm Met}}
\newcommand{\eq}{{\rm eq}}
\newcommand{\Cc}{\cali{C}}
\newcommand{\FS}{{{_\mathrm{FS}}}}
\newcommand{\MP}{{{_\mathrm{MP}}}}
\newcommand{\C}{\mathbb{C}}
\newcommand{\N}{\mathbb{N}}
\newcommand{\R}{\mathbb{R}}
\renewcommand{\P}{\mathbb{P}}
\newcommand{\cO}{{\mathcal O}}
\title[Equidistribution and convergence speed for zeros of 
holomorphic sections]{Equidistribution and convergence speed 
for zeros of holomorphic sections of singular Hermitian line bundles}
\author{Tien-Cuong Dinh}
\address{Department of Mathematics, National University 
of Singapore, 10 Lower Kent Ridge Road, Singapore 119076}
\email{matdtc@nus.edu.sg,  
url : http://www.math.nus.edu.sg/$\sim$matdtc}
\thanks{T.-C.\ D.\ partially supported by Start-Up 
Grant R-146-000-204-133 from National University of Singapore}
\author{Xiaonan Ma}
\address{Institut Universitaire de France
\&Universit\'e Paris Diderot - Paris 7,
UFR de Math\'ematiques, Case 7012,
75205 Paris Cedex 13, France}
\email{xiaonan.ma@imj-prg.fr}
\thanks{X.\ M.\ partially supported by 
Institut Universitaire de France and 
funded through the Institutional Strategy of 
the University of Cologne within the German Excellence Initiative}
\author{George Marinescu}
\address{Universit{\"a}t zu K{\"o}ln,
Mathematisches Institut, Weyertal 86-90, 50931 K{\"o}ln, Germany\\
    \& Institute of Mathematics `Simion Stoilow', Romanian Academy,
Bucharest, Romania}
\email{gmarines@math.uni-koeln.de}
\thanks{G.\ M.\ partially supported by DFG funded 
projects SFB/TR 12, MA 2469/2-1 and ENS Paris}
\date{\today}
\begin{document}

\begin{abstract}
We establish the equidistribution of zeros of random holomorphic 
sections of powers of a semipositive singular Hermitian line bundle, 
with an estimate of the convergence speed.
\end{abstract}

\maketitle

\tableofcontents
\section{Introduction}\label{s0}
The purpose of this paper is to study the convergence speed of
the zero-divisors of random sequences of holomorphic sections 
in high tensor powers of a 
holomorphic line bundle endowed with a singular Hermitian metric.
 
Distribution of zeros of random polynomials is a classical subject, 
starting with the papers of Bloch-P\'olya, 
Littlewood-Offord, Hammersley, Kac and Erd\"os-Tur\'an, 
see e.g., \cite{BDi:97,BLev,BSh,SheppVanderbei} 
for a review and complete references.
After the work of Nonnenmacher-Voros 
\cite{No:97,NoVo:98}, general methods were developed by 
Shiffman-Zelditch \cite{ShZ99} and Dinh-Sibony \cite{DS06b} 
to describe the asymptotic distribution of zeros of random 
holomorphic sections of a positive line bundle over 
a projective manifold endowed with a smooth positively 
curved metric. 
The paper \cite{DS06b} gives moreover a good estimate of
the convergence speed and applies to general measures 
(e.\,g.,\ equidistribution of complex zeros of homogeneous 
polynomials with real coefficients). 
These methods were extended to the non-compact setting
in \cite{DMS}. Some important technical tools for higher dimension 
used in the previous works were introduced
by Forn{\ae}ss-Sibony \cite{FS95}.

In \cite{CM11} it was shown that the equidistribution results
from \cite{DS06b,ShZ99} extend to the case of a singular 
Hermitian holomorphic line bundle with strictly positive curvature 
current. 

We will start with an abstract statement. For an arbitrary complex
vector space $V$ 
we denote by $\mathbb{P}(V)$ the projective space of 
$1$-dimensional subspaces of $V$. For $v\in V$ we denote by
$[v]$ its class in $\mathbb{P}(V)$. Fix now a vector space $V$ 
of complex dimension $d+1$.
Recall that there is a canonical identification of 
$\mathbb{P}(V^{*})$ with the Grassmannian 
$G_{d}(V)$ of hyperplanes in $\P(V)$, given by 
$\mathbb{P}(V^{*})\ni[\xi]\longmapsto H_{\xi}
:=\P(\ker\xi)\in G_{d}(V) $, 
for $\xi\in V^{*}\setminus\{0\}$.
If $V$ is endowed with a Hermitian metric, then we denote by 
$\omega_\FS$ the induced Fubini-Study 
form on projective spaces $\P(V)$ normalized so that 
$\sigma_\FS:=\omega_\FS^d$ is a probability measure.
We also use the same notations for $\P(V^*)$.

Fix an integer $1\leqslant k\leqslant n$. 
We consider on $\P(V^*)^k$ the Haar measure $\sigma_\MP$ 
associated with the natural action of 
the unitary group on the factors of $\P(V^*)^k$ (cf. \eqref{MP}).
If $\xi=(\xi_1,\ldots,\xi_k)$ is a point in $\P(V^*)^k$, denote by 
$H_\xi$ the intersection of the hyperplanes $H_{\xi_i}$ in $\P(V)$. 
The following extension of Theorem \ref{th_abstract_1} 
stated below
can be obtained using the ideas in  \cite{DMS} and \cite{DS06b}.
This is a version of Large Deviation Theorem in our setting.
\begin{theorem} \label{th_abstract_high}
Let $(X,\omega_X)$ be a compact K\"ahler manifold of dimension 
$n$ and let $V$ be a Hermitian 
complex vector space of dimension $d+1$. 
Let $\Phi:~X\dashrightarrow~\P(V)$ be a meromorphic map. 
Then, there exist $c>0$ depending only on $(X,\omega_X)$ 
and $m>0$ depending only on $k$ such that for any $\gamma>0$ 
there is a subset $E_{\gamma}$ of $\P(V^*)^k$ 
with the following properties
\begin{enumerate}
\item[(a)] $\sigma_\MP(E_{\gamma})
\leqslant c\, d^m e^{-\gamma/c}$. 
\item[(b)] For $\xi$ outside $E_{\gamma}$\,, 
the current $\Phi^*[H_\xi]$ is well-defined and 
\begin{align}\label{eq:1.1}
\|\Phi^*[H_\xi]-\Phi^*(\omega_\FS^k)\|_{-2}
\leqslant \gamma m_{k-1},
\end{align}
where $m_k$ denotes the mass of the current 
$\Phi^*(\omega_\FS^k)$ {\rm(}cf. (\ref{eq:3.1}), (\ref{eq:3.4})
for the definitions of the semi-norm $\|\cdot\|_{-2}$
and mass on currents{\rm)}. 
\end{enumerate}
\end{theorem}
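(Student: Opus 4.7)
The natural route is to bootstrap from the single-hyperplane case $k=1$, i.e.\ Theorem~\ref{th_abstract_1}, by a telescoping decomposition and a union bound. I would first apply Theorem~\ref{th_abstract_1} with a threshold $\gamma'$ to be calibrated later, obtaining an exceptional set $E^{(1)}_{\gamma'}\subset\P(V^*)$ of $\sigma_\FS$-measure at most $c\,d^{m}e^{-\gamma'/c}$ such that, for $\eta\notin E^{(1)}_{\gamma'}$, the current $\Phi^*[H_\eta]$ is a well-defined positive closed $(1,1)$-current, cohomologous to $\Phi^*\omega_\FS$, and
\[
\|\Phi^*[H_\eta]-\Phi^*\omega_\FS\|_{-2}\leqslant\gamma'.
\]
The candidate exceptional set for our theorem is then
\[
E_\gamma=\bigcup_{j=1}^{k}\bigl\{\xi\in\P(V^*)^k:\xi_j\in E^{(1)}_{\gamma'}\bigr\};
\]
the product structure of $\sigma_\MP$ and a union bound give $\sigma_\MP(E_\gamma)\leqslant k\,c\,d^{m}e^{-\gamma'/c}$, and a calibration of $\gamma'$ that absorbs the factor $k$ into the $k$-dependent polynomial prefactor yields claim (a).

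For $\xi\notin E_\gamma$ each factor $\Phi^*[H_{\xi_j}]$ is well defined, and a Bertini-type argument combined with the Forn\ae ss--Sibony regularization machinery ensures that the wedge product $\Phi^*[H_\xi]=\Phi^*[H_{\xi_1}]\wedge\cdots\wedge\Phi^*[H_{\xi_k}]$ is a well-defined positive closed $(k,k)$-current. I would then decompose the error by the standard telescoping identity
\[
\Phi^*[H_\xi]-\Phi^*(\omega_\FS^k)=\sum_{j=1}^{k} T_j\wedge\bigl(\Phi^*[H_{\xi_j}]-\Phi^*\omega_\FS\bigr),
\]
where $T_j:=\Phi^*[H_{\xi_1}]\wedge\cdots\wedge\Phi^*[H_{\xi_{j-1}}]\wedge\Phi^*\omega_\FS^{\,k-j}$ is a positive closed current of bidegree $(k-1,k-1)$, cohomologous to $\Phi^*\omega_\FS^{\,k-1}$, and hence of total mass equal to $m_{k-1}$. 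This reduces matters to controlling each summand in the $\|\cdot\|_{-2}$ seminorm.

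The crux is a Chern--Levine--Nirenberg-style inequality of the form
\[
\|T\wedge\ddc u\|_{-2}\leqslant C\,\mathrm{mass}(T)\,\|\ddc u\|_{-2}
\]
for $T$ a positive closed current of complementary bidegree and $u$ a quasi-plurisubharmonic potential difference of the type appearing above; one proves it by pairing against a smooth $(n-k,n-k)$ test form and moving $\ddc$ onto the test form via integration by parts. Applied to each summand of the telescoping sum together with the bound from the first step, this gives
\[
\|\Phi^*[H_\xi]-\Phi^*(\omega_\FS^k)\|_{-2}\leqslant C\,k\,\gamma'\,m_{k-1},
\]
and a final choice $\gamma'\sim\gamma/(Ck)$ delivers (b) after re-absorbing constants. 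The main obstacle is to establish the existence of $\Phi^*[H_\xi]$ for generic $\xi$ \emph{and} the wedge-product inequality simultaneously, because the intermediate currents $T_j$ carry unbounded potentials; this rests on the DSH test-form calculus from \cite{DS06b} underlying \eqref{eq:3.1} together with the Forn\ae ss--Sibony regularization of positive closed currents, exactly in the spirit used in \cite{DMS,CM11}.
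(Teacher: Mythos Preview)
There is a genuine gap at the step you call the ``crux'': the inequality
\[
\|T\wedge\ddc u\|_{-2}\leqslant C\,\|T\|\,\|\ddc u\|_{-2}
\]
is not available when $T$ is singular, and your sketch via integration by parts does not prove it. After moving $\ddc$ onto the test form you obtain $\langle T\wedge\ddc u,\varphi\rangle=\int_X u\,T\wedge\ddc\varphi$, and you must now bound an integral of $u$ against the \emph{singular} signed measure $T\wedge\ddc\varphi$, whose support lies on $\supp T$. The seminorm $\|\ddc u\|_{-2}$ only controls $\int_X u\,\ddc\psi$ for \emph{smooth} $\psi$, i.e.\ integrals of $u$ against absolutely continuous measures with $\Cc^0$ density; it gives no control of $u$ along a fixed analytic subset. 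In your telescoping each $T_j$ contains factors $\Phi^*[H_{\xi_i}]$, which are currents of integration on hypersurfaces, so this obstruction is unavoidable. A related structural symptom: your exceptional set has the product form $\bigcup_j\pi_j^{-1}(E^{(1)}_{\gamma'})$, so membership depends only on each $\xi_j$ separately, whereas closeness of $\Phi^*[H_\xi]$ to $\Phi^*(\omega_\FS^k)$ genuinely depends on how the hyperplanes $H_{\xi_j}$ interact on $\Phi(X)$.

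The paper avoids this by working directly on the parameter space $\P(V^*)^k$ rather than telescoping on $X$. For a fixed $\Cc^2$ test form $\varphi$ on $X$ it sets $v(\xi)=\langle\Phi^*[H_\xi],\varphi\rangle$, realized via the incidence variety $\widetilde X\subset X\times\P(V^*)^k$ as $v=(\Pi_2)_*(\Pi_1)^*\varphi$. The bound $-\omega_X^{n-k+1}\leq\ddc\varphi\leq\omega_X^{n-k+1}$ yields $-T\leq\ddc v\leq T$ for the fixed positive closed $(1,1)$-current $T=(\Pi_2)_*(\Pi_1)^*(\omega_X^{n-k+1})$ on $\P(V^*)^k$, and one then invokes an exponential concentration estimate for $\omega_\MP$-psh functions on $\P(V^*)^k$ (Lemmas~\ref{lemma_DS} and~\ref{lemma_key}) to obtain $|v(\xi)-\int v\,d\sigma_\MP|\leq\gamma\vartheta$ outside a set depending only on $T$ and $\gamma$, hence independent of $\varphi$. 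The mass $\vartheta=\|T\|$ is then computed, via the universal incidence variety in $\P(V)\times\P(V^*)^k$, to be bounded by a $k$-dependent constant times $m_{k-1}$. Thus the factor $m_{k-1}$ enters through a mass computation on the parameter space, not through a wedge-product inequality on $X$.
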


Consider now a holomorphic line bundle $L\to X$ 
on a compact K\"ahler manifold $(X,\omega_{X})$
endowed with a singular Hermitian metric $h^L$.
Let $K_{X}$ be the canonical line bundle on $X$
with the metric induced by $\omega_{X}$.
Let $(F,h^F)$ be an auxiliary Hermitian holomorphic line bundle 
endowed with a smooth metric $h^F$. 
These metrics and the volume form $\omega_X^n$ induce 
an $L^2$ scalar product 
\eqref{herm_prod} on the space of sections of $L^p\otimes F$ 
and we denote by $H^0_{(2)}(X,L^p\otimes F)$
the space of holomorphic $L^2$ sections (cf.\ \eqref{ell2}). 
These spaces are finite dimensional Hilbert spaces 
endowed with the scalar product \eqref{herm_prod}.

This induces Fubini-Study metrics $\omega_{\FS}$ and probability 
measures
$\sigma_\FS$ on the spaces $\P(H^0_{(2)}(X,L^p\otimes F))$
and also multi-projective metrics 
$\omega_\MP$ and natural probability measures 
$\sigma_p:=\sigma_{p,\MP}$
on $\P(H_{(2)}^0(X,L^p\otimes F))^k$ (see \eqref{MP}).
Consider the probability space
\begin{equation}\label{probsp_2}
(\Omega_k(L,F),\sigma_{\infty})
:=\prod_{p=1}^\infty \big(\P (H_{(2)}^0(X,L^p\otimes F))^k,
\sigma_p\big).
\end{equation}
Although we don't indicate explicitly, these spaces depend on 
$h^L$, $h^F$. 
If $F$ is trivial we just write $(\Omega_k(L),\sigma_{\infty})$.

We have the following equidistribution result with speed estimate 
for the zeros of random 
$L^2$ holomorphic sections of big line bundles endowed with
semipositively curved metrics. For a holomorphic section $s$ 
of a line bundle we denote by $\Div(s)$ the associated divisor
and by $[\Div(s)]$ the current of integration on $\Div(s)$.
We refer to Definition
\ref{def_speed} for the notion of convergence speed of currents. 
\begin{theorem}\label{th_eq_speed}
Let $(X,\omega_X)$ be a compact K\"ahler manifold of dimension 
$n$, and let $L$ be a holomorphic line bundle endowed with 
a singular metric $h^L$ such that $c_1(L,h^L)\geq 0$ on $X$.

(i) Assume that $L$ is big and let $\widetilde{h}^L$ be 
a singular Hermitian metric on $L$ with 
$c_1(L,\widetilde{h}^L)\geq \varepsilon\omega_X$ 
for some $\varepsilon>0$. 
Assume that $h^L\leqslant A\, \widetilde{h}^L$ for some 
constant $A> 0$. 
Then for $\sigma_\infty$-almost every sequence 
$([s_p])\in(\Omega_1(L),\sigma_{\infty})$,
${(\frac1p}[\Div(s_p)])$ 
converges to $c_1(L,h^L)$ on $X$ as $p\to\infty$ with 
speed $O\big(\frac1p\log p\big)$.

(ii) Let $U\subset X$ be an 
open set such that 
$c_1(L,h^L)\geq \varepsilon\omega_X$ on a neighborhood of 
$\overline U$ for some $\varepsilon>0$. 
Then for $\sigma_\infty$-almost every sequence 
$([s_p])\in(\Omega_1(L,K_X),\sigma_\infty)$,
${(\frac1p}[\Div(s_p)])$ 
converges to $c_1(L,h^L)$ on $U$ as $p\to\infty$ 
with speed $O\big(\frac1p\log p\big)$.
\end{theorem}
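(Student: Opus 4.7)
The plan is to reduce Theorem \ref{th_eq_speed} to the abstract large deviation bound Theorem \ref{th_abstract_high} (with $k=1$) via the Kodaira-type meromorphic map $\Phi_p: X \dashrightarrow \P(V_p)$, where $V_p := H^0_{(2)}(X, L^p\otimes F)^*$, associated to an orthonormal basis $\{s^{(p)}_j\}_{j=0}^{d_p}$ of $H^0_{(2)}(X, L^p\otimes F)$; afterwards one compares $\Phi_p^*(\omega_\FS)$ with $p\, c_1(L, h^L)$ through a logarithmic Bergman kernel estimate. Under the canonical identification $\P(V_p^*) \cong G_{d_p}(\P(V_p))$, a non-zero section $s$ corresponds to the hyperplane $H_\xi$ it cuts out and $\Phi_p^*[H_\xi] = [\Div(s)]$; moreover, the identification is unitarily equivariant so that $\sigma_p$ on $\P(H^0_{(2)}(X, L^p\otimes F))$ is transported to $\sigma_\MP$ on $\P(V_p^*)$.

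Apply Theorem \ref{th_abstract_high} with $\gamma = \gamma_p := \lambda \log p$ to obtain, for each $p$, an exceptional set $E_{\gamma_p,p}$ of $\sigma_p$-measure at most $c\, d_p^m p^{-\lambda/c}$. Since $L$ is big (or in case (ii) since bigness follows from strict positivity near $\overline U$), $d_p = O(p^n)$, and for $\lambda$ large enough these measures are summable in $p$. A Borel--Cantelli argument in the product space $(\Omega_1(L,F), \sigma_\infty)$ then shows that $\sigma_\infty$-a.s.\ the section $[s_p]$ lies outside $E_{\gamma_p, p}$ for all but finitely many $p$, and for such $p$ the theorem supplies
$$\bigl\|[\Div(s_p)] - \Phi_p^*(\omega_\FS)\bigr\|_{-2} \leqslant \gamma_p\, m_0 \leqslant C\log p,$$
the quantity $m_0$ being uniform in $p$. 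Dividing by $p$ reduces the statement to the deterministic estimate $\bigl\|\tfrac{1}{p}\Phi_p^*(\omega_\FS) - c_1(L, h^L)\bigr\|_{-2} = O(\log p / p)$. Writing $B_p(x) := \sum_j |s^{(p)}_j(x)|^2_{(h^L)^p\otimes h^F}$ for the Bergman kernel function, the Poincar\'e--Lelong identity yields, on $X$,
$$\tfrac{1}{p}\Phi_p^*(\omega_\FS) - c_1(L, h^L) = \tfrac{1}{2\pi p}\,\ddbar\log B_p + \tfrac{1}{p}\,c_1(F, h^F);$$
since $\|\cdot\|_{-2}$ is a second-order semi-norm, it suffices to show $\|\tfrac{1}{p}\log B_p\|_{L^1} = O(\log p/p)$, globally for (i) and on $U$ for (ii).

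The upper bound $\log B_p \leqslant C\log p$ follows from the submean value inequality for plurisubharmonic functions, combined with $h^L \leqslant A\,\widetilde h^L$ (respectively the local strict positivity in (ii)) and the dimension bound $d_p = O(p^n)$. The lower bound is obtained by a peak section construction: via H\"ormander or Ohsawa--Takegoshi $L^2$-estimates applied to $\widetilde h^L$ in case (i), and directly to $h^L$ in case (ii) -- where the $K_X$ twist removes the need for a curvature correction and strict positivity near $\overline U$ provides the required curvature -- one constructs at a generic point $x$ a holomorphic section of $L^p\otimes F$ with controlled $L^2$ norm and prescribed jet, giving $\log B_p(x) \geqslant -C\log p$. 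The main obstacle is precisely this Bergman kernel control: the Tian--Zelditch--Catlin expansion is unavailable in the singular setting, and Lelong number sets of $h^L$ preclude any uniform pointwise lower bound, so one must content oneself with an $L^1$ (resp.\ local) estimate extracted from $L^2$ methods with multiplier ideals. The logarithmic loss is intrinsic to both the dimension factor $d_p^m$ in Theorem \ref{th_abstract_high} and to the peak section construction, and it is precisely this loss that determines the final speed $O(\log p / p)$.
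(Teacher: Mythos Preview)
Your overall strategy is correct and coincides with the paper's: reduce to the abstract large-deviation estimate (the paper packages the $k=1$ case plus Borel--Cantelli as Corollary \ref{cor_abstract1}), then prove $\|\log B_p\|_{L^1}=O(\log p)$ (Theorems \ref{t4.1} and \ref{t4.2}). However, your sketch of the Bergman kernel bounds has real gaps.

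For the upper bound, the submean inequality applied to a section with $\|s\|_p=1$ yields, in a local frame with psh weight $\psi$,
\[
\log B_p(z)\;\leq\;\log(Cr^{-2n})+2p\Big(\sup_{B(z,r)}\psi-\psi(z)\Big),
\]
and when $h^L$ is singular the oscillation term is \emph{not} pointwise bounded, so ``$\log B_p\le C\log p$'' fails as stated. The hypothesis $h^L\le A\,\widetilde h^L$ is irrelevant here (it says $h^L$ is \emph{less} singular than $\widetilde h^L$, which helps only for lower bounds), and the dimension bound $d_p=O(p^n)$ plays no role either. The paper controls the positive part of $\log B_p$ in $L^1$ by a separate technical lemma (Lemma \ref{lemma_max_psh}) comparing $\int\psi$ with $\int\sup_{B(\cdot,\rho^4)}\psi$, choosing $r=p^{-4}$; this is the missing ingredient. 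For the lower bound in (i), Ohsawa--Takegoshi followed by H\"ormander with the auxiliary metric $H_p=(h^L)^{\otimes(p-p_0)}\otimes(\widetilde h^L)^{\otimes p_0}$ does not give $\log B_p\ge -C\log p$ but rather $\log B_p\ge\eta$ for a fixed $L^1$ function $\eta$ independent of $p$; this is precisely where $h^L\le A\,\widetilde h^L$ enters. Finally, your parenthetical ``bigness follows from strict positivity near $\overline U$'' is false---bigness is global---but it is also unnecessary: $d_p=O(p^n)$ holds for any line bundle on a compact manifold by Siegel's lemma.
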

The assumption $h^L\leqslant A\, \widetilde{h}^L$ in (i) means that
$h^L$ is less singular than the positively curved metric
 $\widetilde{h}^L$. 
Note that the assumptions in (i) and (ii) are necessary. 
Without them there could be very few sections in $H^0(X,L^p)$ 
or $H^0(X,L^p\otimes K_X)$, respectively, that is, 
their dimension could be bounded independently of $p$.

We consider next continuous Hermitian metrics on 
ample line bundles. Let $L$ be an ample line bundle 
over a compact K\"ahler manifold $X$ of dimension $n$. 
Let $h^L_{0}$ be a smooth Hermitian metric 
on $L$ such that $\alpha=c_{1}(L,h^L_{0})$ is a K\"ahler form.
Let $h^L$ be a continuous Hermitian metric on $L$ which
is associated with a continuous function 
$\varphi$ by $h^L=h^L_{0}e^{-2\varphi}$. We call $\varphi$ 
a global weight of $h$.
We do not assume that the curvature current $c_{1}(L,h^L)$ 
is positive (it is not of order 0 in general).  

Define the equilibrium weight $\varphi_{\eq}$ associated with the 
continuous weight $\varphi$ as the upper envelope of all
$\alpha$-psh functions (cf.\ \eqref{alpha-psh}) smaller 
than $\varphi$ on $X$, 
\begin{equation}\label{e_phie}
\varphi_{\eq}:X\to[-\infty,\infty),\:\:\varphi_{\eq}(x)
:={\sup}^{*}\Big\{\psi(x):  
\text{$\psi\in PSH(X,\alpha)$, $\psi\leq\varphi$\:\:on $X$}\Big\}
\end{equation}
where the star denotes upper semi-continuous regularization. 
(The upper semi-continuous regularization of a function $\psi$ is 
$\psi^*(x)=\limsup_{y\to x}\psi(y)$.)
The equilibrium first Chern form is defined by
\begin{align}\label{eq:1.4}
\omega_{\eq}:=\alpha+\ddc \varphi_{\eq}\,.
\end{align}
The equilibrium metric on $L$ is given by 
$h^L_{\eq}=h^L_{0}e^{-2\varphi_{\eq}}$; it satisfies
$c_1(L,h^L_{\eq})=\omega_{\eq}$. 
Note that here $\varphi_{\eq}$ is bounded on $X$ 
since $\varphi$ is bounded and $\alpha$ is a K\"ahler form, 
so constant functions are $\alpha$-psh.
Therefore, the wedge-products $\omega_{\eq}^{k}$, 
$1\leq k\leq n$, are well-defined \cite{BT82}.
The equilibrium measure is given by $\mu_{\eq}=\omega_{\eq}^{n}$.
When $X$ is the projective line $\P^{1}$ and $L$ is the hyperplane
line bundle $\mathcal{O}(1)$, the measure $\mu_{\eq}$ is 
a minimizer of the weighted logarithmic energy \cite{ST97}.

The following result generalizes a result by Berman \cite{Berman} 
where smooth weights $\varphi$ were considered.
It shows that the equilibrium weight
of a global H\"older weight can be uniformly approximated by 
global Fubini-Study weights, with speed estimate.
\begin{theorem} \label{th_cv_holder}
Let $(X,\omega_X)$ be a compact K\"ahler manifold, $(L,h^L_{0})$ 
be an ample line bundle
endowed with a smooth metric $h^L_{0}$ such that 
$c_{1}(L,h^L_{0})$ is a K\"ahler form.
Let $h^L=h^L_{0}e^{-2\varphi}$ be a singular metric on $L$, 
such that $\varphi$ is 
H\"older continuous on $X$.
Then the equilibrium weight $\varphi_{\eq}$ is 
continuous on $X$. Moreover, the global Fubini-Study
weights $\varphi_p$ given by \eqref{e:21} converge to 
$\varphi_{\eq}$ with estimate
\begin{equation}\label{e5.1}
\|\varphi_p-\varphi_{\eq}\|_{\infty}=O
\left(\frac1p{\log p}\right)\,,\:\:p\to\infty\,,
\end{equation}
where $\|\cdot\|_{\infty}$ denotes the supremum norm on $X$.
In particular, for any $1\leq k\leq n$ we have 
$\frac{1}{p^k}\omega_p^k\to\omega_{\eq}^k$ on $X$ as 
$p\to\infty$ with speed $O\big(\frac1p\log p\big)$. 
\end{theorem}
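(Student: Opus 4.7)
The plan is to prove the two-sided uniform estimate $\|\varphi_p-\varphi_{\eq}\|_{\infty}=O(\log p/p)$ directly; all three conclusions of the theorem follow from it. Continuity of $\varphi_{\eq}$ is automatic as a uniform limit of the manifestly continuous functions $\varphi_p$ (these are continuous because for $p$ large $L^p$ is globally generated, so the Bergman kernel $K_p(x,x)=\sum_j|s_j(x)|^2_{h_0^{L^p}}$ arising from an $L^2$-orthonormal basis of $H^0(X,L^p)$ is strictly positive and smooth); and the $O(\log p/p)$ speed for $\frac{1}{p^k}\omega_p^k\to\omega_{\eq}^k$ then follows from Bedford-Taylor theory applied to the uniformly bounded $\alpha$-psh potentials $\varphi_p,\varphi_{\eq}$, combined with iterated integration by parts against a smooth $(n-k,n-k)$-test form, with error terms uniformly controlled by $\|\varphi_p-\varphi_{\eq}\|_{\infty}$.

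For the upper bound, I fix an $L^2$-normalized $s\in H^0(X,L^p)$ and set $\psi_s:=\frac{1}{p}\log|s|_{h_0^{L^p}}$; this is $\alpha$-psh and satisfies $\int_X e^{2p(\psi_s-\varphi)}\,\omega_X^n\leq 1$. Using a smooth local potential of $\alpha$ and the submean inequality for plurisubharmonic functions on a coordinate ball $B(x,r)$, together with the H\"older estimate $\varphi\geq\varphi(x)-Cr^\beta$ (where $\beta\in(0,1]$ is the H\"older exponent of $\varphi$), one obtains
\[
c\,r^{2n}e^{2p\psi_s(x)}\leq\int_{B(x,r)}e^{2p\psi_s}\,\omega_X^n\leq e^{2p\varphi(x)+2Cpr^\beta}.
\]
Choosing $r=((\log p)/p)^{1/\beta}$ yields $\psi_s(x)\leq\varphi(x)+C'(\log p)/p$ uniformly in $x$ and $s$. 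Since $\psi_s-C'(\log p)/p$ is $\alpha$-psh and dominated by $\varphi$, the definition \eqref{e_phie} of $\varphi_{\eq}$ forces $\psi_s\leq\varphi_{\eq}+C'(\log p)/p$, and the reproducing-kernel identity $\varphi_p(x)=\sup_{\|s\|_{L^2}=1}\psi_s(x)$ upgrades this pointwise to $\varphi_p\leq\varphi_{\eq}+O(\log p/p)$.

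For the lower bound I would invoke the Ohsawa-Takegoshi extension theorem. Write $L^p=K_X\otimes E_p$ with $E_p:=L^p\otimes K_X^{-1}$, which is ample for $p\geq p_0$, and equip $E_p$ with $h_0^{E_p}e^{-2p\varphi_{\eq}}$; the curvature current is $p\alpha-c_1(K_X,h^{K_X})+p\ddc\varphi_{\eq}\geq\varepsilon\omega_X$ for $p$ large. For each $x_0\in X$, Ohsawa-Takegoshi produces $s_{p,x_0}\in H^0(X,L^p)$ with $|s_{p,x_0}(x_0)|_{h_0^{L^p}}\geq 1$ and
\[
\int_X|s_{p,x_0}|^2_{h_0^{L^p}}e^{-2p\varphi_{\eq}}\,\omega_X^n\leq C\,e^{-2p\varphi_{\eq}(x_0)},
\]
with $C$ independent of $x_0$ and $p$. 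Since $\varphi_{\eq}\leq\varphi$, the same bound holds with $\varphi$ in place of $\varphi_{\eq}$ on the left; normalizing in the $L^2$-norm attached to $h^L$ and using $\varphi_p(x_0)\geq\frac{1}{p}\log\bigl(|s_{p,x_0}(x_0)|_{h_0^{L^p}}/\|s_{p,x_0}\|_{L^2}\bigr)$ gives $\varphi_p(x_0)\geq\varphi_{\eq}(x_0)-\frac{\log C}{2p}$, a speed even of order $1/p$. The main obstacle here is producing this extension with a constant uniform in both $x_0\in X$ and $p\geq p_0$: one must track the $K_X$-twist carefully so that the curvature hypothesis is in force for all large $p$, and verify that mere boundedness of $\varphi_{\eq}$ (itself a consequence of boundedness of $\varphi$ together with the fact that constants are $\alpha$-psh) is enough at a single-point extension, avoiding any multiplier-ideal obstruction that an unbounded weight might introduce.
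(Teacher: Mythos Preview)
Your upper bound is correct and matches the paper's argument: both apply the submean inequality on a ball of radius comparable to $p^{-1/\beta}$, exploit the H\"older continuity of $\varphi$ to control the oscillation of the weight on that ball, and then invoke the $\alpha$-psh property of $\varphi_p$ (equivalently, of each $\psi_s$) to replace $\varphi$ by $\varphi_{\eq}$ via the envelope definition \eqref{e_phie}.

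The lower bound has a genuine gap. You equip $E_p=L^p\otimes K_X^{-1}$ with $h_0^{E_p}e^{-2p\varphi_{\eq}}$ and assert that its curvature
\[
p(\alpha+\ddc\varphi_{\eq})-c_1(K_X,h^{K_X})=p\,\omega_{\eq}-c_1(K_X,h^{K_X})
\]
is $\geq\varepsilon\omega_X$ for $p$ large. But $\omega_{\eq}$ is only \emph{semi}positive: in general there is no $\delta>0$ with $\omega_{\eq}\geq\delta\omega_X$ (indeed $\omega_{\eq}^n=0$ on the open set $\{\varphi_{\eq}<\varphi\}$, which is nonempty whenever $\varphi$ is not itself $\alpha$-psh, and $\omega_{\eq}\geq\delta\omega_X$ would force $\omega_{\eq}^n\geq\delta^n\omega_X^n>0$). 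Multiplying by $p$ therefore buys nothing, and the curvature is merely $\geq -c_1(K_X,h^{K_X})$, which need not be positive. The Ohsawa--Takegoshi/H\"ormander hypothesis fails as written. The paper's remedy is to reserve a fixed number $p_0$ of copies of $L$ carrying the strictly positive smooth metric $h^L_0$: on $L^p$ it uses
\[
H_{e,p}:=(h^L_{\eq})^{\otimes(p-p_0)}\otimes(h^L_0)^{\otimes p_0},
\]
whose curvature $(p-p_0)\,\omega_{\eq}+p_0\alpha\geq p_0\alpha$ is bounded below by a fixed K\"ahler form independently of $p$. With $p_0$ chosen once and for all (large enough to also absorb the Ricci term and the auxiliary weight $n\log|y-z|$), the peak-section construction of Section~\ref{section_hyp} produces \eqref{e:Bke2b}; since $\varphi_{\eq}\leq\varphi$ and both are bounded, one transfers to the $h_p$-norm and obtains $\varphi_p\geq\varphi_{\eq}-c/p$ exactly as you intended. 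Your argument is salvaged by precisely this positivity injection.
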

\begin{corollary}\label{cor_cv_holder}
Let $(X,\omega_X)$, $(L,h^L)$  be as in 
Theorem \ref{th_cv_holder}. Let $1\leq k\leq n$.  
Then for $\sigma_{\infty}$-almost every sequence 
$(S_p)\in(\Omega_k(L),\sigma_\infty)$,
$S_p=([s_p^{(1)}],\ldots,[s_p^{(k)}])$,
the sequence of currents of integration on the common zeros 
$\frac{1}{p^k}\big[s_p^{(1)}=\ldots=s_p^{(k)}=0\big]$ converges 
to $\omega_{\eq}^k$ on $X$ as $p\to\infty$ 
with speed $O\big(\frac1p\log p\big)$. 
\end{corollary}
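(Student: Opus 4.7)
The plan is to combine the abstract large-deviation estimate of Theorem \ref{th_abstract_high} (applied to each level $p$) with the Fubini-Study approximation from Theorem \ref{th_cv_holder}, and then promote the resulting deviation estimate to an almost-sure statement via Borel-Cantelli. For each $p\geq 1$, let $\Phi_p:X\dashrightarrow\P(H^0(X,L^p)^*)$ be the Kodaira map associated with an $L^2$-orthonormal basis of $H^0(X,L^p)$. Under the identification $\P(V^*)\cong\P(H^0(X,L^p))$ with $V=H^0(X,L^p)^*$, a point $\xi=([s_p^{(1)}],\ldots,[s_p^{(k)}])\in\P(V^*)^k$ corresponds exactly to a $k$-tuple of sections, and one checks that
\[
\Phi_p^*[H_\xi]=\bigl[s_p^{(1)}=\ldots=s_p^{(k)}=0\bigr],\qquad \Phi_p^*(\omega_\FS^k)=\omega_p^k,
\]
where $\omega_p=\Phi_p^*\omega_\FS$ is precisely the Fubini-Study pullback used in Theorem \ref{th_cv_holder}.

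Since $L$ is ample, the Hilbert polynomial gives $\dim H^0(X,L^p)=O(p^n)$, hence $d_p^m=O(p^{nm})$ with $m$ the constant from Theorem \ref{th_abstract_high}. Applying that theorem at level $p$ with $\gamma_p:=C\log p$ for $C$ sufficiently large produces exceptional sets $E_p\subset\P(H^0(X,L^p))^k$ with $\sigma_p(E_p)\leq cd_p^m p^{-C/c}=O(p^{-2})$, which is summable over $p$. By Borel-Cantelli, for $\sigma_\infty$-almost every sequence $(S_p)$ one has $S_p\notin E_p$ for all $p\geq p_0(S_p)$. For such a good sequence, Theorem \ref{th_abstract_high}(b) yields
\[
\bigl\|[s_p^{(1)}=\ldots=s_p^{(k)}=0]-\omega_p^k\bigr\|_{-2}\leq \gamma_p\, m_{p,k-1},
\]
where $m_{p,k-1}$ is the mass of $\omega_p^{k-1}$. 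Writing $\omega_p^{k-1}=p^{k-1}(\omega_p/p)^{k-1}$ and using that $(\omega_p/p)^{k-1}\to\omega_{\eq}^{k-1}$ weakly as a consequence of Theorem \ref{th_cv_holder} (trivially for $k=1$), the pairing of $(\omega_p/p)^{k-1}$ against $\omega_X^{n-k+1}$ is uniformly bounded, so $m_{p,k-1}=O(p^{k-1})$.

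Dividing the displayed inequality by $p^k$ therefore gives
\[
\Bigl\|\tfrac{1}{p^k}\bigl[s_p^{(1)}=\ldots=s_p^{(k)}=0\bigr]-\tfrac{1}{p^k}\omega_p^k\Bigr\|_{-2}=O\!\Bigl(\tfrac{\log p}{p}\Bigr),
\]
and combining this with $\frac{1}{p^k}\omega_p^k\to\omega_{\eq}^k$ at speed $O(\frac{1}{p}\log p)$ (from Theorem \ref{th_cv_holder}) produces the asserted almost-sure convergence at the stated speed. The main technical point I expect is confirming that the seminorm bound $\|\cdot\|_{-2}=O(\frac{\log p}{p})$ indeed matches the notion of convergence speed from Definition \ref{def_speed}, together with the uniform-in-$p$ mass control $m_{p,k-1}=O(p^{k-1})$; both are essentially bookkeeping once definitions are unwound, but they are what makes the quantitative rate go through.
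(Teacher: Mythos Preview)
Your proposal is correct and follows essentially the same route as the paper: the paper's proof simply cites Corollary \ref{cor_abstract_2} (which packages precisely the steps you spell out---apply Theorem \ref{th_abstract_high} at each level, choose $\lambda_p$ a large multiple of $\log p$, use the bound $m_{p,k-1}=O(p^{k-1})$, and run Borel--Cantelli) together with Theorem \ref{th_cv_holder}. One minor difference: for the mass bound $m_{p,k-1}=O(p^{k-1})$ the paper argues cohomologically via Lemma \ref{le:mk} (using that the mass of a positive closed current depends only on its class and that $\{\omega_p\}=p\,c_1(L)$), which is cleaner and does not rely on the convergence from Theorem \ref{th_cv_holder}; your convergence argument also works but is slightly roundabout.
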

The paper is organized like follows. In Section \ref{s:prelim} 
we recall the notions of Bergman kernel and Fubini-Study currents
in the context of singular Hermitian metrics. 
In Section \ref{section_abstract}
we describe a general setting for the equidistribution of zeros,
which also delivers precise information 
about the convergence speed. In Section \ref{section_hyp} 
we apply these results to semipositive
Hermitian metrics and prove Theorem \ref{th_eq_speed}.
Finally, in Section \ref{s:hoelder} we consider the case of 
arbitrary singular metrics and prove Theorem \ref{th_cv_holder}
and Corollary \ref{cor_cv_holder}. 

\medskip
\noindent
\textbf{Acknowledgements.} The paper was partially written during 
the visits of the first author at the 
Laboratorio Fibonacci and Centro De Giorgi
and the second author at National University of Singapore. 
They would like to thank 
these organizations and Marco Abate, Carlo Carminati, 
Stefano Marmi, David Sauzin for their hospitality.
\section{Preliminaries}\label{s:prelim}
Let $X$ be a complex manifold.
We assume that the reader is acquainted with the notion of
plurisubharmonic (henceforth abbreviated psh) function 
$\varphi:X\to[-\infty,\infty)$, see \cite[Ch.\,I\,(5.1)]{D12}.
Recall that psh functions are locally integrable 
(\cite[Ch.\,I\,(4.17), (5.3)]{D12}).
A function $\varphi:X\to[-\infty,\infty)$ is called quasi-psh if it is 
locally given as the sum of a psh function and a smooth function.

We also assume that the reader is familiar to the notion of 
positive current (in the sense of Lelong, i.\,e., non-negative, 
see \cite[Ch.\,III\,(1.13)]{D12}, \cite[B.2.11]{MM07}).
For a positive current $\beta$ we write $\beta\geq0$.
If $\alpha$ is a closed real current of bidegree $(1, 1)$ on $X$
we define the space of $\alpha$-psh functions as
\begin{equation}\label{alpha-psh}
PSH(X,\alpha):=\big\{\varphi:X\to[-\infty,\infty): \text{$\varphi$ 
quasi-psh,
$\ddc\psi+\alpha\geq0$}\big\}.
\end{equation}
Here $d^c=\frac{1}{2\pi i}(\partial-\overline\partial)$, hence 
$dd^c=\frac{i}{\pi}\partial\overline\partial$.

Let $(X,\omega_X)$ be a compact K\"ahler manifold of dimension
$n$ and consider a holomorphic line bundle $L\to X$. 
Let $U\subset X$ be an open set for which there exists 
a local holomorphic frame $e_{L}:U\longrightarrow L$. 

\par Let $h^L$ be a smooth Hermitian metric on $L$. 
Recall that the first Chern 
form $c_1(L,h^L)$ of $h$ is defined by
\begin{equation}\label{e:curb1}
c_1(L,h^L)\mid_{_{U}}=-dd^c\log|e_{L}|_{h^L}=\frac{i}{2\pi}\,R^L,
\end{equation} 
where $R^L$ is the curvature of the holomorphic Hermitian 
connection $\nabla^L$ on $(L,{h^L})$.

\par If $h^L$ is a singular Hermitian metric on $L$ then 
we set 
\begin{equation}\label{e:lw}
|e_{L}|_{h^L}^2=e^{-2\varphi},
\end{equation} 
where the function $\varphi\in L^1_{loc}(U)$ is called the 
local weight of the metric $h$ with respect to the frame $e_{L}$ 
(see \cite{D90}, also \cite[p.\ 97]{MM07}). 
The curvature of ${h^L}$, 
\begin{equation}\label{e:curb2}
c_1(L,h^L)\mid_{_{U}}=dd^c\varphi\,,
\end{equation}
is a well-defined closed (1,1) current on $X$. 
The cohomology class of $c_1(L,h^L)$ in $H^{1,1}(X,\R)$ does not 
depend on the choice of ${h^L}$.
This is the Chern class of $L$ and we denote it by $c_1(L)$. 

We say that the metric ${h^L}$ is 
semipositively curved if $c_1(L,h^L)$ is a positive current. 
Equivalently, the local weights $\varphi$ given by \eqref{e:lw}  
are (equal almost everywhere) to psh functions. 
Recall that a line bundle $L$ is said to be pseudoeffective if
it admits a (singular) semipositively curved metric $h^L$ 
(see \cite{D90}).

Let $L$ be a holomorphic line bundle and $h^L_0$ be 
a smooth metric on $L$. Set $\alpha=c_1(L,h^L_0)$. 
Let us denote by $\met^{+}(L)$ the set of semipositively 
curved metrics on $L$. There exists a bijection
\begin{equation}\label{psh-met}
PSH(X,\alpha)\longrightarrow\met^+(L)\,,\:\: 
\varphi\longmapsto h^L_{\varphi}=h^L_0 e^{-2\varphi},
\end{equation}
and $c_1(L,h^L_{\varphi})=\alpha+dd^c\varphi$.

Let $(F,h^F)$ be an auxiliary Hermitian holomorphic line bundle
endowed with a smooth metric $h^F$. We denote by
\begin{equation}\label{e:hp}
h_p=(h^L)^{\otimes p}\otimes h^F,
\end{equation} 
the metric induced by $h^L$, $h^F$ on $L^p\otimes F$.
Consider the space $L^2(X,L^p\otimes F)$ of $L^2$ sections 
of $L^p\otimes F$ relative to the metric $h_p$ and the volume form  
$\omega_X^n$ on $X$, endowed with the inner product
\begin{equation}\label{herm_prod}
(s,s')_p=\int_{X}\langle s,s'\rangle_{h_p}\,\omega_X^n\,,\;\;
{\rm where}\;\;s,s'\in L^2(X,L^p\otimes F).
\end{equation} 
We let $\|s\|_p^2=(s,s)_p$. 
Let us denote by
\begin{equation}\label{ell2}
H^0_{(2)}(X,L^p\otimes F):=\Big\{s\in L^2(X,L^p\otimes F): 
s \text{ holomorphic}\Big\}
\end{equation}
the space of $L^2$-holomorphic sections of $L^p\otimes F$.
In the same way, let $L^2_{q,r}(X,L^p\otimes F)$
be the space of 
$L^2$-integrable $(q,r)$-forms with values in 
$L^p\otimes F$ relative to $h_{p}$  and $\omega_{X}$.
We will add `loc' for spaces of locally $L^2$-integrable forms 
when  $X$ is not compact.

For a section $s\in H^0_{(2)}(X,L^p\otimes F)$ we denote by 
$\Div(s)$ the divisor defined by $s$ (cf.\ \cite[(2.1.4)]{MM07}) and 
by $[\Div(s)]$ the current of integration on $\Div(s)$ 
(cf.\ \cite[Ch.\,III\,(2.5)]{D12}, \cite[(B.2.16)]{MM07}).
Note that for two non-zero elements 
$s,s'\in H^0_{(2)}(X,L^p\otimes F)$ 
which are in the same equivalence class 
in $\P(H^0_{(2)}(X,L^p\otimes F))$ we have 
$\Div(s)=\Div(s')$, so $\Div$ is well-defined 
on $\P(H^0_{(2)}(X,L^p\otimes F))$.

Assume now that $(L,{h^L})$ 
is a holomorphic line bundle endowed 
with semipositively curved singular metric. 
Denote by $\Sigma\subset X$ the set of points where ${h^L}$
is not bounded. This set has zero Lebesgue mass. 
Let $\{s^p_j\}_{j=1}^{d_p}$ be an orthonormal basis of 
$H^0_{(2)}(X,L^p\otimes F)$.  
Let $B_p$ be the Bergman kernel function defined by 
\begin{equation}\label{e:Bergfcn}
B_p(x)=\sum_{j=1}^{d_p}|s^p_j(x)|_{h_{p}}^2\,,\;\;\;
x\in X\setminus\Sigma\, ,
\end{equation}
where $h_p$ is given by \eqref{e:hp}. 
Let $s^p_j=f^p_je_L^{\otimes p}\otimes e_F$, where 
$f^p_j\in\mathcal{O}(U)$ and $e_L$, $e_F$
are holomorphic frames of $L$, $F$ on $U$. 
Let $\varphi'$ be the local weight of
$h^F$ with respect to $e_F$, defined as in \eqref{e:lw}.
Then on $U\setminus\Sigma$ the following holds
\begin{equation}\label{e:Bergfcn1}
\log B_p=\log\Big(\sum_{j=1}^{d_p}|f^p_j|^2\Big)
-2p\,\varphi-2\,\varphi'\,.
\end{equation}
The right-hand side of \eqref{e:Bergfcn1} is a difference of psh 
(hence locally integrable) functions on $U$, 
so defines an element in $L^1_{loc}(U,\omega_X^n)$. 
Therefore, $\log B_p$ defines an element in $L^1(X,\omega_X^n)$. 

The Kodaira map is the meromorphic map given by
\begin{equation}\label{e:kod}
\begin{split}
&\Phi_p:X\dashrightarrow
\P\big(H^0_{(2)}(X,L^p\otimes F)^*\big)\,,\\
\Phi_p(x)=&\big\{s\in H^0_{(2)}(X,L^p\otimes F):
s(x)=0\big\}\,,\:\:x\in X\setminus Bs_p\,,
\end{split}
\end{equation}
where a point in $\P\big(H^0_{(2)}(X,L^p\otimes F)^*\big)$
is identified with a hyperplane through the origin
in $H^0_{(2)}(X,L^p\otimes F)$ and $Bs_p=\{x\in X:s(x)=0\;
\text{for all $s\in H^0_{(2)}(X,L^p\otimes F)$}\}$ 
is the base locus of $H^0_{(2)}(X,L^p\otimes F)$. 
We define the \emph{Fubini-Study currents} by 
\begin{equation}\label{fs}
\omega_p=\Phi_p^*(\omega_{_\mathrm{FS}}),
\end{equation}
where $\omega_\FS$ denotes the Fubini-Study $(1,1)$-form on 
$\P\big(H^0_{(2)}(X,L^p\otimes F)^*\big)$.
They are positive closed $(1,1)$-currents obtained by pulling back
the Fubini-Study form $\omega_\FS$. 
The current $\omega_p$ is in fact given by an $L^1$-form on $X$, 
which is smooth outside the set of indeterminacy of $\Phi_p$, 
see Lemma \ref{lemma_pullback} below. We have
\begin{equation}\label{fs1}
\omega_p\mid_{_U}
=\frac12\,\ddc\log\Big(\sum_{j=1}^{d_p}|f^p_j|^2\Big)\,,
\end{equation}
hence by \eqref{e:Bergfcn1}
\begin{equation}\label{e:Bergfcn2}
\frac12\,\ddc\log B_p=\omega_p-p\,c_1(L,h^L)-\,c_1(F,h^F)\,.
\end{equation}

We used above the following basic property that we will give 
a proof for the reader's convenience. 

\begin{lemma} \label{lemma_pullback}
Let $\Phi:Y\dashrightarrow Z$ be a meromorphic map between two compact 
complex manifolds $Y$, $Z$ 
of dimensions $\ell$ and $m$ respectively. 
Let $\alpha$ be a smooth $(q,r)$-form on $Z$ 
with $0\leq q,r\leq \min(\ell,m)$. 
Then the $(q,r)$-current $\Phi^*(\alpha)$ on $Y$ 
is well-defined and given by a $(q,r)$-form with $L^1$ coefficients 
which is smooth outside the indeterminacy set of $\Phi$. 
\end{lemma}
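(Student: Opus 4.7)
The plan is to define $\Phi^*(\alpha)$ through a desingularization of the graph of $\Phi$, and then to extract the stated regularity and $L^1$ integrability from standard properties of proper modifications. Let $I(\Phi) \subset Y$ denote the indeterminacy set of $\Phi$, an analytic subset of codimension at least two, and let $\Gamma \subset Y \times Z$ be the closure of the graph of $\Phi|_{Y \setminus I(\Phi)}$. By Hironaka, choose a resolution $\pi: \widetilde{\Gamma} \to \Gamma$ with $\widetilde{\Gamma}$ smooth, and set $\pi_Y = \pr_Y \circ \pi$, $\pi_Z = \pr_Z \circ \pi$, where $\pr_Y, \pr_Z$ denote the natural projections from $Y \times Z$. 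Both $\pi_Y$ and $\pi_Z$ are proper holomorphic maps between compact complex manifolds, and $\pi_Y$ is a modification which restricts to a biholomorphism above $Y \setminus I(\Phi)$.

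I would then define $\Phi^*(\alpha) := (\pi_Y)_*(\pi_Z^*\alpha)$. Since $\alpha$ is smooth, $\pi_Z^*\alpha$ is a smooth $(q,r)$-form on $\widetilde{\Gamma}$, and pushing it forward by the proper map $\pi_Y$ yields a well-defined $(q,r)$-current on $Y$. The construction is independent of the chosen resolution, since any two are dominated by a common third and pushforward is functorial. Above $Y \setminus I(\Phi)$, $\pi_Y$ is a biholomorphism onto its image and $\pi_Z \circ \pi_Y^{-1} = \Phi$, so on this dense open set $\Phi^*(\alpha)$ coincides with the classical smooth pullback; this proves the smoothness assertion off the indeterminacy set.

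It remains to show that the coefficients of $\Phi^*(\alpha)$ are in $L^1(Y)$. Pairing against a smooth, compactly supported test form $\beta$ of complementary bidegree gives
\begin{equation*}
\langle \Phi^*(\alpha), \beta \rangle = \int_{\widetilde{\Gamma}} \pi_Z^*(\alpha) \wedge \pi_Y^*(\beta),
\end{equation*}
which by compactness of $\widetilde{\Gamma}$ and smoothness of the integrand is bounded by a constant multiple of $\|\beta\|_{C^0}$; hence $\Phi^*(\alpha)$ is a current of order zero, whose coefficients are complex Radon measures on $Y$. The main obstacle is to upgrade these measures to absolutely continuous densities in $L^1$. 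I would do this locally: on a chart $U \subset Y$ around a point of $I(\Phi)$, after possibly performing further blow-ups of $\widetilde{\Gamma}$, one may arrange that $\pi_Y$ is a monomial map with normal-crossing exceptional divisor, and in such coordinates the fiber integrals defining $(\pi_Y)_*$ reduce to integrals of type $\int |z|^{-a}\,dV$ with $a$ strictly below the real codimension, hence convergent. This gives $L^1_{\loc}$ coefficients on $Y$, and compactness of $Y$ upgrades this to $L^1$ globally. Equivalently, one may invoke the standard principle that push-forwards of smooth forms under proper modifications of smooth complex manifolds have $L^1_{\loc}$ coefficients.
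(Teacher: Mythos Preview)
Your framework is sound and close to the paper's, but there are two points worth noting.

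First, the resolution $\widetilde{\Gamma}\to\Gamma$ is unnecessary. The paper works directly with the (possibly singular) graph $\Gamma\subset Y\times Z$: integration over $\reg(\Gamma)$ defines a positive closed current $[\Gamma]$ of bidimension $(\ell,\ell)$ with finite mass, and one sets $\Phi^*(\alpha):=(\pi_Y)_*\big(\pi_Z^*(\alpha)\wedge[\Gamma]\big)$. Since $\pi_Z^*(\alpha)$ is smooth, this wedge product is well-defined, and the pairing formula
\[
\langle\Phi^*(\alpha),\beta\rangle=\int_{\reg(\Gamma)}\pi_Z^*(\alpha)\wedge\pi_Y^*(\beta)
\]
shows at once that $\Phi^*(\alpha)$ is of order $0$, exactly as you argue via the resolution. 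Your route through Hironaka is correct but heavier than needed.

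Second, and more substantively, your $L^1$ argument is incomplete. The monomial/normal-crossing sketch is vague, and the heuristic ``$\int|z|^{-a}\,dV$ with $a$ strictly below the real codimension'' is not accurate as stated: already for the blow-up of a point in $\C^2$, the naive Jacobian calculation gives a density $|x|^{-2}$, where the exponent equals the real codimension of the exceptional image; integrability comes from a support constraint, not from the exponent alone. Invoking the ``standard principle'' at the end is essentially assuming the conclusion.

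The paper's argument for $L^1$ is both simpler and self-contained. Having established that $\Phi^*(\alpha)$ is of order $0$ (so its coefficients are measures), one observes that for any proper analytic subset $V\subset Y$, the set $\Gamma\cap\pi_Y^{-1}(V)$ is a proper analytic subset of $\Gamma$, hence has zero $2\ell$-dimensional volume; the pairing formula then shows $\Phi^*(\alpha)$ carries no mass on $V$, in particular none on $I$. Since $\Phi^*(\alpha)$ agrees with the smooth form $(\Phi|_{Y\setminus I})^*(\alpha)$ on $Y\setminus I$ and has no mass on $I$, that smooth form must have $L^1$ coefficients and represent the current on all of $Y$. This ``no mass on the indeterminacy set'' step is the missing idea in your argument, and it replaces your local computation entirely.
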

\proof
Recall that for a meromorphic map $\Phi:Y\dashrightarrow Z$
(\cite[Definition\,2.1.19]{MM07}, \cite{Re57}) 
there is an analytic subset $I$ of $Y$ such that $\Phi$ 
is holomorphic on $Y\setminus I$ and the closure of the graph 
of $\Phi$ over $Y\setminus I$ is an irreducible analytic subset 
of dimension $\ell$ of $Y\times Z$, called the graph of $\Phi$. 
The smallest set $I$ with this property is called the indeterminacy 
set of $\Phi$. Since $Y$ is a manifold, 
$I$ is of codimension at least two \cite[p.\,333]{Re57}. 
Denote by $\Gamma$ the graph of $\Phi$. 
It defines, by integration on its regular part $\reg(\Gamma)$, 
a positive closed current $[\Gamma]$ of bi-dimension $(\ell,\ell)$ 
in $Y\times Z$ \cite[p.\,140]{D12}.

Denote by $\pi_Y,\pi_Z$ the natural projections from $Y\times Z$
to $Y$ and $Z$ respectively. 
The pull-back $\Phi^*(\alpha)$ is defined by 
\begin{equation}\label{e:ur}
\Phi^*(\alpha):= (\pi_Y)_*(\pi_Z^*(\alpha)\wedge [\Gamma]).
\end{equation}
This is the formal definition for any current $\alpha$. 
It makes sense when the wedge-product 
in the last expression is well-defined because here the operator 
$(\pi_Y)_*$ is well-defined on all currents. 
In our setting, since $\pi_Z^*(\alpha)$ is smooth, the current 
$\Phi^*(\alpha)$ is well-defined. 
More precisely, if $\beta$ is a smooth form of bidegree 
$(\ell-q,\ell-r)$ on $Y$ then 
\begin{equation}\label{e:ur1}
\langle \Phi^*(\alpha),\beta\rangle 
= \int_{\reg(\Gamma)} \pi_Z^*(\alpha)\wedge \pi_Y^*(\beta).
\end{equation}
Note that the $2\ell$-dimensional volume of $\Gamma$ 
is finite \cite[p.\,140]{D12}. 

Formula \eqref{e:ur1} shows that the current $\Phi^*(\alpha)$ 
extends continuously to the space of test 
forms $\beta$ with continuous coefficients. So $\Phi^*(\alpha)$
is a current of order 0. 
If $V$ is a proper analytic subset of $Y$,
then $\Gamma\cap\pi_Y^{-1}(V)$ is a proper analytic subset
of $\Gamma$, so $\Gamma\cap \pi_Y^{-1} (y)$ has 
zero $2\ell$-dimensional volume.
Therefore, the last formula implies 
that $\Phi^*(\alpha)$ has no mass on $V$, in particular, 
this current has no mass on the indeterminacy set $I$.

If $\beta$ has compact support in $Y\setminus I$,
since $\pi_Y$ defines a bi-holomorphic map from 
$\Gamma\setminus \pi_Y^{-1}(I)$ to $Y\setminus I$, 
the last integral is equal to  the integral on $Y\setminus I$ 
of the form $(\pi_Y)_*(\pi_Z)^*(\alpha)\wedge\beta$. 
The last expression is equal to 
$(\Phi|_{Y\setminus I})^*(\alpha)\wedge \beta$, where 
$(\Phi|_{Y\setminus I})^*(\alpha)$ 
is the pull-back of the smooth form $\alpha$ by the holomorphic 
map $\Phi|_{Y\setminus I}$. 
We conclude that the current $\Phi^*(\alpha)$ is equal on 
$Y\setminus I$ to the smooth form 
$(\Phi|_{Y\setminus I})^*(\alpha)$. 
Finally, since $\Phi^*(\alpha)$ is of order 0 and has no mass 
on $I$, the form $(\Phi|_{Y\setminus I})^*(\alpha)$ 
has $L^1$ coefficients and is equal, 
in the sense of currents on $Y$, to  $\Phi^*(\alpha)$. 
This completes the proof of the lemma. 
\endproof

Note that the lemma can be extended to meromorphic maps
between open manifolds provided that $\pi_Y$ is proper 
on $\pi_Z^{-1}(\supp(\alpha))\cap\Gamma$. 
Moreover, by definition, if $\alpha$ is closed and/or positive 
then $\Phi^*(\alpha)$ is also closed and/or positive. 

\section{Abstract setting for equidistribution} 
\label{section_abstract}
We will only consider the case of compact K\"ahler manifolds 
but it is certainly easy to extend the 
results to the case of manifolds of Fujiki class and even open 
manifolds satisfying some properties of concavity.

Let $(X,\omega_X)$ be a compact K\"ahler manifold of 
dimension $n$. 
Recall that we can introduce several semi-norms on the set 
of currents of order 0 on $X$. 
If $U$ is an open subset of $X$, $\alpha$ is a strictly positive 
number and $T$ is a current of order 0 on $X$, define 
\begin{align}\label{eq:3.1}
\big\|T\big\|_{U,-\alpha}:=\sup \big|\langle T,u\rangle\big|
\end{align}
where the supremum is taken over smooth test forms $u$ with 
support in $U$ and such that 
their $\Cc^\alpha$-norm satisfies $\|u\|_{\Cc^\alpha}\leq 1$.  

For simplicity, we will drop the letter $U$ when $U=X$. 
In this case, $\|\cdot\|_{-\alpha}$ is a norm 
and the associated topology coincides with the weak topology 
on any set of currents with mass 
bounded by a fixed constant. We will only consider the case 
$\alpha=2$ and we will be interested 
in estimates on $\|\cdot\|_{U,-2}$. The other cases can 
be obtained as a consequence, e.g., if $\alpha<2$, 
we can use the theory of interpolation between Banach spaces
\cite{DS06b}, \cite{Triebel78}.

\begin{definition}\label{def_speed}
Let $(c_p)$ be a sequence of positive numbers converging to $0$. 
Let $\{T_p:p\in\N\}$ and $T$ 
be currents on $X$ with mass bounded by a fixed constant. 
We say that the sequence $(T_p)$ 
converges on $U$ to $T$ with speed $(c_p)$
if $\big\|T_p-T\big\|_{U,-2}\leqslant c_p$ for 
$p$ large enough. We also say that the sequence converges 
with speed $O(c_p)$ if 
it converges with speed $(Cc_p)$ for some $C\geq 0$.
\end{definition}

Recall that a current of order $0$ is an element in the dual of the 
space of continuous forms. The mass of such currents is 
the norm dual 
to the $\mathscr{C}^0$ norm on forms. However, for a positive 
$(q,q)$-current $T$ on $(X,\omega_{X})$,
it is more convenient to use the following notion of mass
\begin{align}\label{eq:3.4}
\|T\| = \left\langle  T, \omega_{X}^{n-q}\right\rangle
\end{align}
which is equivalent to the above mass-norm. The advantage is that 
when $T$ is positive closed, its mass only depends 
on its cohomology class in $H^{q,q}(X,\R)$.

The following result was obtained in  \cite[Theorem\,4]{DMS}, 
where we assumed 
that the map $\Phi$ has generically maximal rank $n$, 
but the proof there is valid 
without this condition. 

\begin{theorem} \label{th_abstract_1}
Let $(X,\omega_X)$ be a compact K\"ahler manifold of 
dimension $n$ and let $V$ be a Hermitian 
complex vector space of dimension $d+1$. Consider a meromorphic
map $\Phi:~X\dashrightarrow  ~\P(V)$. 
Then there exists  $c>0$ depending only on $(X,\omega_X)$ 
such that for any $\gamma>0$ there is a subset $E_\gamma$ 
of $\P(V^*)$ with the following properties:
\begin{enumerate}
\item[(a)] $\sigma_\FS(E_\gamma)\leq c\,d^{\,2}\, e^{-\gamma/c}$. 
\item[(b)] For $\xi$ outside $E_\gamma$\,, the current 
$\Phi^*[H_\xi]$ is well-defined and 
\begin{align}\label{eq:3.2}
\big\|\Phi^*[H_\xi]-\Phi^*(\omega_\FS)\big\|_{-2}
\leqslant \gamma\,. 
\end{align}
\end{enumerate}
\end{theorem}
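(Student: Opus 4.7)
The plan is to use Poincaré–Lelong to write $\Phi^*[H_\xi]-\Phi^*(\omega_\FS)=\ddc(u_\xi\circ\Phi)$, reduce the $\|\cdot\|_{-2}$-seminorm to an $L^1$-bound on the potential via integration by parts, and then combine Jensen, Fubini, and an explicit Beta-function integral on $\P^d$ to obtain exponential concentration in $\xi$.

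For $\xi\in V^*\setminus\{0\}$ with $\Phi(X)\not\subset H_\xi$ (an open condition failing only on a proper analytic subset of $\P(V^*)$), the $\omega_\FS$-psh function
\[
u_\xi([z]):=\log\frac{|\langle z,\xi\rangle|}{\|z\|\,\|\xi\|}\leq 0
\]
on $\P(V)$ satisfies Poincaré–Lelong $[H_\xi]=\omega_\FS+\ddc u_\xi$. Lemma \ref{lemma_pullback} provides $\Phi^*(\omega_\FS)$, and the quasi-psh pull-back $u_\xi\circ\Phi\in L^1(X,\omega_X^n)$ allows us to set $\Phi^*[H_\xi]:=\Phi^*(\omega_\FS)+\ddc(u_\xi\circ\Phi)$. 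For a $\Cc^2$ test $(n-1,n-1)$-form $v$ of norm $\leq 1$, integration by parts together with $|\ddc v|\leq C_X\omega_X^n$ (with $C_X$ depending only on $(X,\omega_X)$) yields
\[
\bigl|\langle\Phi^*[H_\xi]-\Phi^*(\omega_\FS),v\rangle\bigr|\;\leq\;-C_X\,\psi(\xi),\qquad \psi(\xi):=\int_X u_\xi\circ\Phi\,\omega_X^n\leq 0.
\]
It thus suffices to control the lower tail of $\psi$ on $\P(V^*)$.

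Set $M:=\int_X\omega_X^n$ and fix $\alpha\in(0,2)$. Applying Jensen's inequality to the probability measure $\omega_X^n/M$ and then Fubini gives
\[
\int_{\P(V^*)} e^{-\alpha\psi(\xi)/M}\,d\sigma_\FS(\xi) \;\leq\; \frac{1}{M}\int_X\int_{\P(V^*)} e^{-\alpha u_\xi(\Phi(x))}\,d\sigma_\FS(\xi)\,\omega_X^n(x).
\]
By unitary invariance of $\sigma_\FS$ the inner integral is independent of $[\Phi(x)]$, and choosing unitary coordinates with $\Phi(x)=[e_0]$ reduces it to a Beta integral on $\P^d$ that is finite precisely because $\alpha<2$:
\[
\int_{\P^d}|\xi_0|^{-\alpha}\,d\sigma_\FS(\xi)\;=\;d\cdot B\bigl(1-\tfrac{\alpha}{2},\,d\bigr)\;\leq\; K_\alpha\,d^{\alpha/2},
\]
with $K_\alpha$ an absolute constant (by Stirling). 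Hence $\int e^{-\alpha\psi/M}\,d\sigma_\FS\leq K_\alpha d^{\alpha/2}$, and Chebyshev's inequality gives $\sigma_\FS(\{-\psi/M\geq t\})\leq K_\alpha d^{\alpha/2}e^{-\alpha t}$. Taking $\alpha=1$ (so the prefactor $\sqrt d\leq d^2$), defining $E_\gamma:=\{\xi:-C_X\psi(\xi)>\gamma\}$, choosing $t=\gamma/(C_X M)$, and absorbing $C_X$, $M$, $K_1$ into a single $c=c(X,\omega_X)$ establishes both (a) and (b).

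The main technical obstacle is justifying these formal manipulations in the presence of singularities: $u_\xi\circ\Phi$ takes the value $-\infty$ on the indeterminacy set $I$ of $\Phi$ (of codimension $\geq 2$) and on $\Phi^{-1}(H_\xi)$, so the integration by parts must be carried out on the regular part of the graph of $\Phi$ and then extended by approximating $u_\xi$ from above by smooth decreasing functions and invoking standard Bedford–Taylor/Demailly continuity of Monge–Ampère masses across thin sets. The uniform $\Cc^2$-bound providing $C_X$ comes from compactness of $X$; together these are the technical points at which the argument of [DMS, DS06b] concentrates its care.
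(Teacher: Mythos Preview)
Your argument is correct (modulo the standard regularization issues you flag at the end) and takes a route genuinely different from the one the paper follows, which it attributes to \cite{DMS} and which it spells out in the $k\geq1$ generality via Lemmas~\ref{lemma_DS} and~\ref{lemma_key}. There one fixes the test form $\varphi$ first, studies the function $v(\xi)=\langle\Phi^*[H_\xi],\varphi\rangle$ on the parameter space $\P(V^*)$, shows that $\ddc v$ is sandwiched between $\pm T$ for a fixed positive closed $(1,1)$-current $T$, and then invokes the abstract lower-tail estimate for $\omega_\FS$-psh functions from \cite[Proposition~A.9]{DS06b} (Lemma~\ref{lemma_DS}) together with the uniformity device of Lemma~\ref{lemma_key} to produce an exceptional set independent of $\varphi$. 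You instead eliminate the test form at the outset by integration by parts, reducing everything to the tail of the single $\omega_\FS$-psh function $\psi(\xi)=\int_X u_\xi\circ\Phi\,\omega_X^n$, and then establish exponential concentration directly via Jensen, Fubini, and an explicit Beta integral. This is more elementary---no appeal to the black box from \cite{DS06b}---and even yields the sharper prefactor $d^{1/2}$ in place of $d^2$. The paper's approach, on the other hand, extends cleanly to intersections of $k$ hyperplanes (Theorem~\ref{th_abstract_high}), where there is no single explicit potential analogous to $u_\xi$ and one must work with $v(\xi)$ on the multi-projective parameter space.
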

Consider now holomorphic Hermitian line bundles $(L,h^L)$, 
$(F,h^F)$ such that $h^L$ is a singular Hermitian metric.
We have 
$H^0_{(2)}(X,L^p\otimes F)\subset H^0(X,L^p\otimes F)$, thus 
$d_p:=\dim H^0_{(2)}(X,L^p\otimes F)<\infty$.
We assume that $d_p\geq  1$. Note that there exists $C>0$ 
such that $d_p\leqslant Cp^n$ for all $p\in\N$, 
where $C>0$ is a constant depending only 
on $(X,\omega_X)$, $c_1(L)$, $c_1(F)$.
This follows from 
the holomorphic Morse inequalities \cite[Theorem\,1.7.1]{MM07}
or the Siegel Lemma \cite[Lemma 2.2.6]{MM07}. 

We have the following consequence of the above result 
(compare also \cite[Theorem\,2]{DMS}).
\begin{corollary} \label{cor_abstract}
Let $(X,\omega_X)$ be a compact K\"ahler manifold of 
dimension $n$ and let $(L,h^L)$ be a singular 
Hermitian holomorphic line bundle on $X$. 
Let $(F,h^F)$ be a holomorphic line bundle 
with smooth Hermitian metric.
Then there is $c=c(X,L,F)>0$ depending only on $(X,\omega_X)$ 
and $c_1(L)$, $c_1(F)$, 
with the following property. 
For any sequence of positive numbers $\lambda_p$\,, 
there are subsets $E_p\subset\P(H_{(2)}^0(X,L^p\otimes F))$ 
such that for $p$ large enough
\begin{equation}\label{e:abs1.0}
\sigma_p(E_p)\leqslant c\, p^{2n} e^{-\lambda_p/c}\,,
\end{equation}
\begin{equation}\label{e:abs1.1}
\big\|[\Div(s)] -\omega_p\big\|_{-2} \leqslant\lambda_p\,,\:\:
\text{for any 
$[s]\in \P(H^0_{(2)}(X,L^p\otimes F))\setminus E_p$}\,.
\end{equation}
Let $(\lambda_p)$ be a sequence of positive numbers such that
\begin{equation}\label{e:abs1.2}
\liminf_{p\to\infty} \frac{\lambda_p}{\log p}>(2n+1)c\,.
\end{equation}
Then for $\sigma_\infty$-almost every sequence 
$([s_p])\in\Omega_1(L,F)$, the estimate (\ref{e:abs1.1}) holds 
for $s=s_p$ and $p$ large enough.
\end{corollary}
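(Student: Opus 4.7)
The plan is to deduce the corollary from Theorem \ref{th_abstract_1} by applying it to the Kodaira map $\Phi_p$ associated with the space $H^0_{(2)}(X,L^p\otimes F)$, and then invoke a Borel--Cantelli argument in the product space $\Omega_1(L,F)$.

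First I would set $V_p:=H^0_{(2)}(X,L^p\otimes F)^*$, so that $\P(V_p)$ is the target of the Kodaira map $\Phi_p$ in \eqref{e:kod} and $\P(V_p^*)=\P(H^0_{(2)}(X,L^p\otimes F))$ carries the Fubini-Study measure $\sigma_p$. The key geometric identification I need is that, under the identification $\P(V_p^*)\cong G_{d_p-1}(V_p)$ described in the introduction, a point $[s]\in\P(H^0_{(2)}(X,L^p\otimes F))$ corresponds to the hyperplane $H_{[s]}=\P(\{\xi\in V_p:\xi(s)=0\})$, and hence $\Phi_p^{-1}(H_{[s]})$ is exactly the zero set of $s$. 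Using Lemma \ref{lemma_pullback} and the fact that $\Phi_p$ has no base components (generically), this yields $\Phi_p^*[H_{[s]}]=[\Div(s)]$; combined with $\Phi_p^*(\omega_\FS)=\omega_p$ from \eqref{fs}, the inequality in Theorem \ref{th_abstract_1}(b) becomes exactly \eqref{e:abs1.1}.

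Next, for each $p$ I would apply Theorem \ref{th_abstract_1} to $\Phi_p$ with the choice $\gamma=\lambda_p$, obtaining an exceptional set $E_p\subset\P(H^0_{(2)}(X,L^p\otimes F))$ satisfying
\begin{equation*}
\sigma_p(E_p)\leqslant c_0\,(d_p-1)^2\,e^{-\lambda_p/c_0}
\end{equation*}
with $c_0$ depending only on $(X,\omega_X)$. The dimension bound $d_p\leqslant Cp^n$ recalled just before the corollary (from holomorphic Morse inequalities or the Siegel Lemma), where $C$ depends only on $(X,\omega_X)$, $c_1(L)$ and $c_1(F)$, gives $d_p^2\leqslant C^2 p^{2n}$. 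Absorbing the constant $C^2c_0$ into a single constant $c=c(X,L,F)$ larger than $c_0$, we obtain \eqref{e:abs1.0} and \eqref{e:abs1.1} for all $p$ large enough.

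Finally, for the almost-sure statement, fix $(\lambda_p)$ satisfying \eqref{e:abs1.2}, i.e., there exists $\eta>0$ with $\lambda_p\geqslant (2n+1+\eta)c\log p$ for $p$ large. Then
\begin{equation*}
\sigma_p(E_p)\leqslant c\,p^{2n}\,e^{-\lambda_p/c}\leqslant c\,p^{-1-\eta},
\end{equation*}
so $\sum_p\sigma_p(E_p)<\infty$. Since $\sigma_\infty$ is the product measure, Borel--Cantelli applied to the events $\{([s_p]):[s_p]\in E_p\}$ in $\Omega_1(L,F)$ gives that, for $\sigma_\infty$-almost every sequence $([s_p])$, one has $[s_p]\notin E_p$ for all sufficiently large $p$; hence \eqref{e:abs1.1} holds with $s=s_p$ for $p$ large. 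The only step that requires genuine care is the compatibility check $\Phi_p^*[H_{[s]}]=[\Div(s)]$ for $[s]$ outside the (measure zero) set of sections with non-empty base locus intersections; this is the main technical point but follows from the pullback construction in Lemma \ref{lemma_pullback} together with the standard fact that the base locus $Bs_p$ is a proper analytic subset, on which currents of order zero put no mass.
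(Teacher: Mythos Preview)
Your proof is correct and follows essentially the same approach as the paper: apply Theorem \ref{th_abstract_1} to the Kodaira map $\Phi_p$ with $\gamma=\lambda_p$, use the dimension bound $d_p\leqslant Cp^n$ to obtain \eqref{e:abs1.0}, and conclude with Borel--Cantelli. You have in fact spelled out the identification $\Phi_p^*[H_{[s]}]=[\Div(s)]$ more explicitly than the paper, which simply takes it for granted.
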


\proof
We apply Theorem \ref{th_abstract_1} for 
$V=H^0_{(2)}(X,L^p\otimes F)^*$ and for $\Phi=\Phi_p$, 
where $\Phi_p$ is the Kodaira map \eqref{e:kod}. 
The first assertion is a direct consequence of 
Theorem \ref{th_abstract_1}. 
We prove now the second assertion. 
The hypothesis \eqref{e:abs1.2} on $\lambda_p/\log p$ 
and \eqref{e:abs1.0} guarantee that
\[\sum_{p=1}^\infty \sigma_p(E_p)
\leqslant c'\sum_{p=1}^\infty\frac{1}{p^{\,\delta}}<\infty\,\]
for some $c'>0$ and $\delta>1$. 
Hence the set 
\begin{align}\label{eq:3.6}
E=\big\{ ([s_p]) \in \Omega_1(L,F): [s_p]\in E_p 
\text{ for an infinite number of indices } p\big\}
\end{align}
satisfies $\sigma_\infty(E)=0$. Indeed, for every $N\geq 0$,
it is contained in the set
$$\big\{ ([s_p]) \in \Omega_1(L,F): [s_p]\in E_p
\text{ for at least one  index } p\geq N\big\}$$
which is of $\sigma_\infty$-measure at most equal to
\begin{align}\label{eq:3.7}
\sum_{p=N}^\infty \sigma_p(E_p)
\leq c' \sum_{p=N}^\infty\frac{1}{p^{\,\delta}}=O(N^{1-\delta}).
\end{align}
Therefore, the second assertion of the corollary follows.
\endproof
We easily deduce from Corollary \ref{cor_abstract} the following.
\begin{corollary} \label{cor_abstract1}
Let $(X,\omega_X)$ be a compact K\"ahler manifold of 
dimension $n$ and let $(L,h^L)$ 
be a singular Hermitian holomorphic line bundle on $X$. 
Let $(F,h^F)$ be a 
holomorphic line bundle with a smooth Hermitian metric. 
Let $c=c(X,L,F)$ be the constant given by Corollary 
\ref{cor_abstract} and let $(\lambda_p)$ be a sequence 
of positive numbers satisfying 
\begin{equation}\label{e:abs1.3}
\liminf_{p\to\infty} \frac{\lambda_p}{\log p}>(2n+1)c\,,\;\;
\lim_{p\to\infty} \frac{\lambda_p}{p}=0\,.
\end{equation}
Let $U\subset X$ be an open set. Assume that 
$({\frac1p}\omega_p)$ converges to a current $\Theta$ 
in $U$ with speed $(c_p)$\,. Then for $\sigma_\infty$-almost 
every sequence $([s_p])\in\Omega_1(L,F)$, 
${(\frac1p}[\Div(s_p)])$ converges to $\Theta$ on $U$ with 
speed $\big(c_p+\frac{\lambda_p}{p}\big)$ as $p\to\infty$. 
\end{corollary}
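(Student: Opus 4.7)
\medskip
\noindent\textbf{Proof plan for Corollary \ref{cor_abstract1}.}
The argument is essentially a triangle inequality combined with the previous corollary, so the plan is to verify that the hypotheses have been chosen precisely so that the ``global'' almost sure bound from Corollary~\ref{cor_abstract} transfers to an estimate on the open set $U$ after normalization by $\frac1p$.

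First, I would apply Corollary~\ref{cor_abstract} with the given sequence $(\lambda_p)$. Since the assumption \eqref{e:abs1.3} on $\lambda_p/\log p$ is exactly the hypothesis \eqref{e:abs1.2} of that corollary, it yields a set of full $\sigma_\infty$-measure on which, for all $p$ large enough,
\begin{equation*}
\bigl\|[\Div(s_p)]-\omega_p\bigr\|_{-2}\leqslant \lambda_p.
\end{equation*}
Dividing by $p$ gives $\bigl\|\tfrac1p[\Div(s_p)]-\tfrac1p\omega_p\bigr\|_{-2}\leqslant \lambda_p/p$, and the second condition $\lambda_p/p\to 0$ in \eqref{e:abs1.3} ensures that the right-hand side is a meaningful ``speed'' (it tends to $0$).

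Next, I would observe that for any current $T$ of order $0$ and any open set $U\subset X$, the localized semi-norm satisfies $\|T\|_{U,-2}\leqslant \|T\|_{-2}$, because the supremum in \eqref{eq:3.1} defining $\|T\|_{U,-2}$ is taken over a subclass of the test forms used for the global norm. Therefore the bound above automatically implies
\begin{equation*}
\Bigl\|\tfrac1p[\Div(s_p)]-\tfrac1p\omega_p\Bigr\|_{U,-2}\leqslant \frac{\lambda_p}{p}.
\end{equation*}

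Finally, I would combine this with the hypothesis that $\bigl\|\tfrac1p\omega_p-\Theta\bigr\|_{U,-2}\leqslant c_p$ for $p$ large (Definition~\ref{def_speed}) via the triangle inequality for $\|\cdot\|_{U,-2}$, getting
\begin{equation*}
\Bigl\|\tfrac1p[\Div(s_p)]-\Theta\Bigr\|_{U,-2}\leqslant c_p+\frac{\lambda_p}{p}
\end{equation*}
for $p$ large enough, on a set of sequences of full $\sigma_\infty$-measure. This is exactly the conclusion according to Definition~\ref{def_speed}. There is no real obstacle here; the only subtle point worth checking is that the exceptional set of sequences produced by Corollary~\ref{cor_abstract} is independent of $U$ and of the target current $\Theta$, so the pointwise triangle inequality can be applied sequence by sequence without losing the full-measure property.
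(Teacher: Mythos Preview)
Your proposal is correct and matches the paper's approach: the paper simply states that the result is ``easily deduce[d] from Corollary~\ref{cor_abstract}'' without giving further details, and the triangle-inequality argument you outline (global bound from Corollary~\ref{cor_abstract}, localization via $\|\cdot\|_{U,-2}\le\|\cdot\|_{-2}$, then combination with the hypothesis on $\tfrac1p\omega_p$) is exactly the intended derivation.
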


We consider now products of projective spaces. 
Let $\pi_i:\P(V^*)^k\to \P(V^*)$, $i=1,\ldots, k$, 
be the canonical projections from the multi-projective space 
$\P(V^*)^k\simeq (\P^d)^k$ onto its factors. 
As usual we denote by $\omega_\FS$ the Fubini-Study form 
on $\P(V^*)$. 
Consider the K\"ahler form and volume form on $\P(V^*)^k$, 
\begin{equation}\label{MP}
\omega_\MP:=c_{d,k} \sum_{i=1}^k \pi_i^*(\omega_\FS)\,,\:\:
\sigma_\MP:=\omega_\MP^{kd}\,,
\end{equation}
where $c_{d,k}$ is the positive constant so that the volume form 
$\sigma_\MP$ defines a probability measure. 
The constant $c_{d,k}$ is given by the formula
\begin{align}\label{eq:3.9}
(c_{d,k})^{-dk}=
\Big(\begin{array}{c}
dk \\ 
d
\end{array} \Big)
\Big(\begin{array}{c}
dk-d \\ 
d
\end{array} \Big)
\cdots
\Big(\begin{array}{c}
2d \\ 
d
\end{array} \Big)=\frac{(dk)!}{(d!)^k}
\cdot
\end{align}
Using Stirling's formula $n!\simeq \sqrt{2\pi n} n^n e^{-n}$, 
one can show that $c_{d,k}$ is smaller than 1 and larger than 
a strictly positive constant depending only on $k$. 
The measure $\sigma_\MP$ is the Haar measure associated 
with the natural action of 
the unitary group on the factors of $\P(V^*)^k$.

We give now the proof of Theorem \ref{th_abstract_high}. 
Recall that a {\it quasi-psh} function is locally the difference 
between a 
psh function and a smooth function. A quasi-psh function $u$ on 
$\P(V^*)^k$ is {\it $\omega_\MP$-psh} if it satisfies 
$\ddc u\geq -\omega_\MP$, 
i.e., $\ddc u+\omega_\MP$ is a positive current.  
We need the following result from \cite[Proposition A.9]{DS06b}. 

\begin{lemma} \label{lemma_DS}
There are $c>0$, $\alpha>0$ and $m>0$ depending only 
on $k$ such that 
if $u$ is an $\omega_\MP$-psh function on $\P(V^*)^k$ with  
$\int u d\sigma_\MP=0$, then 
\begin{align}\label{eq:3.11}
u \leq c(1+\log d) \quad \mbox{and}\quad \sigma_\MP\{u<-t\} 
\leq c\, d^me^{-\alpha t} \quad \mbox{for }\, \, t\geq 0.
\end{align}
\end{lemma}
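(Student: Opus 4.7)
My plan is to establish the two inequalities separately via a slicing argument on the factors of $\P(V^*)^k\simeq(\P^d)^k$, combined with a uniform Skoda-type exponential integrability estimate. The pointwise upper bound is essentially the content of the sub-mean value inequality, while the distribution inequality will follow from Chebyshev once uniform exponential integrability is established.

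For the pointwise bound $u\leq c(1+\log d)$, I fix all but one coordinate in $(\P^d)^k$ and consider the restriction of $u$ to the remaining factor. Since $\omega_\MP=c_{d,k}\sum_i\pi_i^*\omega_\FS$ and $c_{d,k}$ is bounded below by a positive constant depending only on $k$ (as noted after \eqref{eq:3.9}), the restriction is $(c_{d,k}\omega_\FS)$-psh on $\P^d$. After rescaling, matters reduce to an $\omega_\FS$-psh function $v$ on $\P^d$. The classical sub-mean value inequality applied in an affine chart, together with the fact that the $\omega_\FS$-volume of a Euclidean ball of fixed radius in that chart is comparable to the total volume up to a factor at most polynomial in $d$, yields an estimate of the form
\begin{equation*}
\sup_{\P^d} v \leq \int_{\P^d} v\,\omega_\FS^d + c\log d.
\end{equation*}
Iterating over the $k$ factors by Fubini and using the normalization $\int u\,d\sigma_\MP=0$ yields $\sup u\leq c(1+\log d)$.

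For the lower tail, the key step is a uniform exponential integrability estimate
\begin{equation*}
\int_{\P(V^*)^k} e^{-\alpha u}\,d\sigma_\MP \leq c\,d^m
\end{equation*}
with $\alpha=\alpha(k)>0$ and $m=m(k)$, valid for any $\omega_\MP$-psh function with $\int u\,d\sigma_\MP=0$. Granted this, Chebyshev's inequality combined with the upper bound gives
\begin{equation*}
\sigma_\MP\{u<-t\} \leq e^{-\alpha t}\int_{\{u<-t\}} e^{-\alpha u}\,d\sigma_\MP \leq c\,d^m e^{-\alpha t}.
\end{equation*}
To prove the integrability, I would first translate by the upper bound so that $\tilde u:=u-c(1+\log d)\leq 0$, which contributes at most a factor $e^{\alpha c(1+\log d)}=O(d^{\alpha c})$ to the integral, and then apply the slicing strategy again: on a generic $\P^d$-slice, $\tilde u$ is $\omega_\FS$-psh (up to a factor depending only on $k$), and the classical Skoda integrability theorem on $\P^d$ provides a uniform-in-$d$ exponent $\alpha$ together with a constant that grows at worst polynomially in $d$. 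Integrating over the remaining factors and applying Fubini absorbs further powers of $d$ into the exponent $m$.

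The principal obstacle is precisely the uniformity of the Skoda constants in $d$: the standard proof of Skoda's theorem uses $L^2$ extension, and its constants a priori depend on the geometry of $\P^d$, which degenerates as $d\to\infty$. The cleanest path around this is to exploit that the Lelong numbers of an $\omega_\MP$-psh function are controlled in terms of the cohomology class $[\omega_\MP]$, which in turn is a sum of $k$ pullbacks of $c_{d,k}\omega_\FS$, with $c_{d,k}$ bounded uniformly in $d$; this keeps the Skoda exponent $\alpha$ independent of $d$, while all $d$-dependence is funneled into the polynomial prefactor $d^m$. The upper bound portion is comparatively routine; the delicate calibration is in tracking these polynomial losses through the slicing and translation steps so that a single exponent $m$ works uniformly.
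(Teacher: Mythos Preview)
The paper does not prove this lemma; it is quoted as \cite[Proposition~A.9]{DS06b}, so there is no in-paper argument to compare your proposal against.

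Your outline for the upper bound $u\leq c(1+\log d)$ is correct. Since $\sigma_\MP$ is precisely the product of the $\sigma_\FS$ measures on the $k$ factors (this follows from the multinomial expansion together with formula~\eqref{eq:3.9}), the slicing/Fubini iteration works as you describe, reducing matters to the one-factor estimate $\sup_{\P^d} v\leq\int v\,d\sigma_\FS + c(1+\log d)$ for $\omega_\FS$-psh $v$, which is classical.

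For the lower tail you have correctly isolated the crux: a uniform exponential estimate $\int e^{-\alpha u}\,d\sigma_\MP\leq c\,d^m$ with $\alpha$ depending only on $k$. However, your proposed resolution does not close the gap you yourself flag. A Lelong-number bound (here automatic, since $\omega_\MP$-psh functions have Lelong numbers at most $c_{d,k}\leq 1$) governs only \emph{which} exponents $\alpha$ make $e^{-\alpha u}$ locally integrable; it says nothing about the \emph{size} of the integral. In the standard proofs of Skoda-type integrability the constant depends on auxiliary data (coverings, local extension constants) whose behaviour as $d\to\infty$ is precisely what is in question. You acknowledge this as ``the principal obstacle'' and then essentially assert it away. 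To complete the argument you need the quantitative input from \cite{DS06b}, whose proof exploits the homogeneous structure of $\P^d$ and explicit comparison with logarithms of distances to track the $d$-dependence directly, or an equivalent device.
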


\begin{lemma} \label{lemma_key}
Let $\Sigma$ be a closed subset of $\P(V^*)^k$ and  
let $u$ be an $L^1$ function which 
is continuous on $\P(V^*)^k\setminus \Sigma$. 
Let $\gamma$ be a positive constant. 
Suppose there is a positive closed $(1,1)$-current $S$ of 
mass $1$ on  $\P(V^*)^k$ such that 
$-S\leqslant \ddc u\leqslant S$ 
and $\int ud\sigma_\MP=0$.  Then, there are 
$c>0$, $\alpha>0$, $m>0$ depending only on $k$ and a Borel set 
$E'\subset \P(V^*)^k$ depending only on $S$ and $\gamma$ 
such that
\begin{align}\label{eq:3.12}
\sigma_\MP(E')\leqslant c\, d^me^{-\alpha\gamma}\quad 
\mbox{and}\quad |u(a)|\leqslant \gamma \quad \mbox{for}
\quad a\not\in \Sigma\cup E'.
\end{align}
\end{lemma}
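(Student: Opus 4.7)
The plan is to reduce the hypothesis $-S \leqslant \ddc u \leqslant S$ to the $\omega_\MP$-plurisubharmonic setting of Lemma \ref{lemma_DS}. Since $\P(V^*)^k \simeq (\P^d)^k$ is a rational K\"ahler manifold, $H^{0,2}(\P(V^*)^k)=0$, so the $\ddbar$-lemma applies. I would write $S = \widetilde S + \ddc v$ where $\widetilde S = \sum_{i=1}^k a_i\,\pi_i^*\omega_\FS$ is the smooth representative of $[S]$ in the natural basis of $H^{1,1}(\P(V^*)^k,\R)$, with $a_i \geqslant 0$, and $v$ a real quasi-psh function. A direct multinomial computation using the mass normalization $\langle S,\omega_\MP^{kd-1}\rangle=1$ gives the sharp identity $\sum_i a_i = k\,c_{d,k}$, hence $a_i \leqslant k\,c_{d,k}$ for every $i$ and therefore $\widetilde S \leqslant k\,\omega_\MP$. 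Since $S = \widetilde S + \ddc v \geqslant 0$, the function $v$ is $\widetilde S$-psh, and in particular $k\omega_\MP$-psh.

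Next I would exploit both sides of $-S \leqslant \ddc u \leqslant S$: the lower bound gives $\ddc(u+v) + \widetilde S \geqslant 0$, so $u+v$ is $k\omega_\MP$-psh, and the symmetric argument gives that $-u+v$ is also $k\omega_\MP$-psh. Setting $\bar v = \int v\,d\sigma_\MP$ and recalling $\int u\,d\sigma_\MP = 0$, the three functions $\tfrac{1}{k}(v-\bar v)$, $\tfrac{1}{k}(u+v-\bar v)$ and $\tfrac{1}{k}(-u+v-\bar v)$ are $\omega_\MP$-psh with zero mean. Applying Lemma \ref{lemma_DS} to each yields an everywhere upper bound $\leqslant kc(1+\log d)$ and, for every $t\geqslant 0$, an exceptional set of $\sigma_\MP$-measure at most $c\,d^m e^{-\alpha t/k}$ on which the corresponding function is $<-t$.

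Off the union of these three exceptional sets, one has $v-\bar v \geqslant -t$ and simultaneously $u+v-\bar v \leqslant kc(1+\log d)$, which combine to $u \leqslant kc(1+\log d)+t$; the symmetric estimate bounds $-u$, so $|u| \leqslant kc(1+\log d)+t$ there. Choosing $t := \gamma - kc(1+\log d)$ and absorbing $e^{\alpha c(1+\log d)/k} = e^{\alpha c/k}\,d^{\alpha c/k}$ into a new polynomial factor, the exceptional set $E'$ (depending only on $S$ through $v$ and on $\gamma$) satisfies $\sigma_\MP(E') \leqslant c'd^{m'}e^{-\alpha\gamma/k}$ with $c',m',\alpha$ depending only on $k$; the trivial regime $\gamma \leqslant kc(1+\log d)$ is absorbed by enlarging $c'$ and taking $E' = \P(V^*)^k$. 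On the complement, $|u(a)| \leqslant \gamma$ holds at every point where $u$ is defined, i.e., off $\Sigma$.

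The main obstacle will be the cohomological step: establishing $\widetilde S \leqslant k\omega_\MP$ with a dilation factor depending only on $k$ and not on $d$. This relies crucially on the product structure of $\P(V^*)^k$ and on the sharp identity $\sum_i a_i = kc_{d,k}$, which in turn uses the precise value of $c_{d,k}$ from \eqref{eq:3.9}; a cruder bound here would leak a $d$-dependence into the exponential decay rate and destroy the required form of the estimate.
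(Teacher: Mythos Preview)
Your approach is essentially identical to the paper's: decompose $S=\widetilde S+\ddc v$ with $\widetilde S=\sum_i a_i\pi_i^*\omega_\FS$, use the mass identity to bound $\widetilde S\leqslant k\,\omega_\MP$ (your computation $\sum_i a_i=kc_{d,k}$ is exactly the paper's \eqref{eq:3.14}), and then feed the resulting $\omega_\MP$-psh functions into Lemma~\ref{lemma_DS}. Your bound $\widetilde S\leqslant k\,\omega_\MP$ is in fact slightly sharper than the paper's, which only asserts $\widetilde S\leqslant\lambda\,\omega_\MP$ for some $\lambda=\lambda(k)$.

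There is one inconsistency to repair. You form $E'$ as the union of the three exceptional sets coming from Lemma~\ref{lemma_DS} applied to $\tfrac1k(v-\bar v)$, $\tfrac1k(u+v-\bar v)$ and $\tfrac1k(-u+v-\bar v)$, and then claim $E'$ depends only on $S$ and~$\gamma$. But the last two sets depend on $u$, and the lemma explicitly requires $E'$ to be independent of $u$; this is not cosmetic, since in the proof of Theorem~\ref{th_abstract_high} a single $E'$ must work simultaneously for every test form~$\varphi$. The fix is immediate and is precisely what the paper does: the upper bounds $\pm u+v-\bar v\leqslant kc(1+\log d)$ from Lemma~\ref{lemma_DS} hold \emph{everywhere}, so only the single sublevel set $E'=\{v-\bar v<-t\}$ is needed, and that set depends only on $S$ (through~$v$) and on~$\gamma$. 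With this correction your argument is complete. (A minor point you skate over, as does the paper with a one-line remark: $u$ is only $L^1$, so one should observe that $u+v$ agrees outside $\Sigma$ with a genuine quasi-psh function before invoking Lemma~\ref{lemma_DS}.)
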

\proof
By K\"unneth's formula, the cohomology group 
$H^{1,1}(\P(V^*)^k,\R)$ is generated 
by the classes of $\pi_i^*(\omega_\FS)$ with $i=1,\ldots,k$. 
Therefore, there are $\lambda_i\geq0$ such that the class 
$\{S\}$ of $S$ is equal to 
$\sum \lambda_i \{\pi_i^*(\omega_\FS)\}$. 
The mass of $S$ can be computed cohomologically. 
If we identify the top bi-degree cohomology group 
$H^{kd,kd}(\P(V^*)^k,\R)$ with $\R$ in the canonical way, 
this mass is equal to the cup product 
$\{S\}\smallsmile \{\omega_\MP\}^{kd-1}$ 
and then a direct computation gives 
\begin{align}\label{eq:3.14}
\sum_{i=1}^k \lambda_i (c_{d,k})^{kd-1} \Big(\begin{array}{c}
dk-1 \\ 
d-1
\end{array} \Big)
\Big(\begin{array}{c}
dk-d \\ 
d
\end{array} \Big)
\cdots
\Big(\begin{array}{c}
2d \\ 
d
\end{array} \Big)
=\sum_{i=1}^k \lambda_i (c_{d,k})^{-1}k^{-1}.
\end{align}
We used here that $\{\omega_\FS\}^d=1$ in 
$H^{d,d}(\P(V^*),\R)\simeq \R$. 
Since $c_{d,k}\leq 1$ and $S$ is of mass $1$, 
we deduce that  $\lambda_i\leq k$.

By the $\ddc$-lemma \cite[Lemma\,1.5.1]{MM07}, 
there is a unique quasi-psh function $v$ such that 
\begin{align}\label{eq:3.15}
\ddc v= S-\sum_{i=1}^k \lambda_i \pi_i^*(\omega_\FS) 
\quad \mbox{and} 
\quad \int v d\sigma_\MP=0.
\end{align}
We have $\ddc v +\lambda\omega_\MP\geq S$ for some constant 
$\lambda>0$ depending only on $k$. 
Define $w:=\lambda^{-1}(u+v)$. We have 
$\ddc w\geq  -\omega_\MP$. 
Since $u$ is continuous outside $\Sigma$, the latter property
implies that $w$ is equal outside $\Sigma$ to a quasi-psh function. 
We still denote this quasi-psh function by $w$. Applying
Lemma \ref{lemma_DS} to $w$ instead of $u$, we obtain that
\begin{align}\label{eq:3.16}
u=\lambda w-v\leqslant c\lambda(1+\log d) -v.
\end{align}

Let $E'$ denote the set $\{v<-\gamma+c\lambda (1+\log d)\}$ 
which does not 
depend on $u$. Clearly, $u\leqslant \gamma$ outside 
$\Sigma\cup E'$. 
The same property applied to $-u$ implies that
$|u|\leqslant \gamma$ outside $\Sigma\cup E'$. 
It remains to bound the size of $E'$.
Lemma \ref{lemma_DS} applied to $\lambda^{-1} v$ yields
\begin{align}\label{eq:3.17}
\sigma_\MP(E')\leq c\, d^m \exp\big(-\alpha\lambda^{-1}\gamma
+c\alpha(1+\log d)\big).
\end{align}
This is the desired inequality for (other) suitable constants 
$c,\alpha$ and $m$.
\endproof

\begin{proof}[End of the proof of Theorem \ref{th_abstract_high}]  
Let $\Phi:X\dashrightarrow\P(V)$ be a meromorphic map and let
$\Gamma\subset X\times \P(V)$ its graph. We define 
\begin{equation}\label{e:incid}
\widetilde{X}=\big\{(x,\xi)\in X\times \P(V^*)^k:
\text{$\exists v\in\P(V)$ 
such that $(x,v)\in\Gamma$, $v\in H_\xi$}\big\}.
\end{equation}
Recall that for $\xi=(\xi_1,\ldots,\xi_k)\in \P(V^*)^k$ we denote 
$H_\xi=H_{\xi_1}\cap\ldots\cap H_{\xi_k}$
the intersection of the hyperplanes $H_{\xi_i}$ in $\P(V )$.
The set $\widetilde{X}$ is a compact analytic subset
in $X\times \P(V^*)^k$,
of dimension $n+(d-1)k$.
Let $\Pi_1$ and $\Pi_2$ denote the natural projections from 
$\widetilde X$ onto $X$ and $\P(V^{*})^k$ respectively.
\begin{lemma}\label{L:exc}
Let $\Sigma\subset \P(V^*)^k$ be the set of points $\xi$ 
such that 
$ \widetilde X\cap \Pi_2^{-1}(\xi)\not=\emptyset$ 
and one of the following properties holds:
\begin{enumerate}
\item[(a)] $\dim H_\xi>d-k$;
\item[(b)] $\dim \widetilde X\cap \Pi_2^{-1}(\xi)>n-k$; 
\item[(c)] $\dim H_\xi=d-k$, $\dim \widetilde X\cap \Pi_2^{-1}(\xi)
=n-k$ 
but the last intersection is not transversal at a generic point. 
\end{enumerate}
Then $\Sigma$ is contained in a proper analytic subset of 
$\P(V^*)^k$.  
\end{lemma}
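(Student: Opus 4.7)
The plan is to exhibit, for each of the three conditions (a), (b), (c), a proper analytic subset of $\P(V^*)^k$ containing the corresponding locus, and then take the union. The main tools will be elementary linear algebra for (a), upper semi-continuity of fiber dimension for proper holomorphic maps for (b), and a critical-locus argument combined with Sard's theorem and Remmert's proper mapping theorem for (c).

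For (a), I will note that if $\xi_1,\ldots,\xi_k$ are represented by nonzero vectors in $V^*$, then $\dim H_\xi = d-\dim_\C\mathrm{span}(\xi_1,\ldots,\xi_k)$, so (a) is equivalent to the linear dependence of $\xi_1,\ldots,\xi_k$ in $V^*$. This is cut out by the simultaneous vanishing of all $k\times k$ minors of the matrix formed by the $\xi_i$, defining a proper algebraic subset of $\P(V^*)^k$.

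For (b), the projection $\Pi_2:\widetilde X\to\P(V^*)^k$ is proper since $\widetilde X$ is compact. If $\Pi_2$ is not surjective, Remmert's proper mapping theorem gives that its image is a proper analytic subset of $\P(V^*)^k$, already containing every $\xi$ with nonempty fiber. Otherwise, the dimension count $\dim\widetilde X-\dim\P(V^*)^k = n+(d-1)k-dk = n-k$ shows the generic fiber has dimension $n-k$, and upper semi-continuity of fiber dimension implies that the jump locus in (b) is a proper analytic subset.

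For (c), which I expect to be the main technical obstacle, introduce the degeneracy locus $B\subset\widetilde X$ as the union of $\widetilde X_{\sing}$ and the closure in $\widetilde X$ of the critical locus of $\Pi_2|_{\widetilde X_{\reg}}$ (the set where the differential $d\Pi_2$ fails to be surjective). Then $B$ is an analytic subset of $\widetilde X$, so by the proper mapping theorem its image $\Pi_2(B)$ is analytic in $\P(V^*)^k$. The key observation is that at a point $(x,\xi)\in\widetilde X_{\reg}$, transversality of the fiber $\widetilde X\cap\Pi_2^{-1}(\xi)$ with $\widetilde X$ is equivalent to $d\Pi_2$ being surjective there; hence if (a) and (b) fail for $\xi$ but (c) still holds, the top-dimensional components of the fiber (of dimension $n-k$) must lie in $B$, whence $\xi\in\Pi_2(B)$. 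The subtle point is to verify that $\Pi_2(B)$ is a proper subset of $\P(V^*)^k$: Sard's theorem applied to $\Pi_2|_{\widetilde X_{\reg}}$ takes care of the critical-locus part, while a stratification argument on $\widetilde X_{\sing}$ (using that each smooth stratum $S\subset\widetilde X_{\sing}$ has $\dim S\leq \dim\widetilde X-1 = n+(d-1)k-1$, so contributes to a generic fiber a piece of dimension at most $s-dk\leq n-k-1<n-k$) handles the singular part. Combining the three proper analytic subsets then exhausts $\Sigma$ and concludes the proof.
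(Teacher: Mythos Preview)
Your handling of (a) and (b) is correct and somewhat more direct than the paper's. The gap is in (c). You assert that $\Pi_2(B)$ is a \emph{proper} subset of $\P(V^*)^k$, but this can fail: for any stratum $S\subset\widetilde X_{\sing}$ with $\dim S\geq dk$ (possible whenever $k\leq n-1$), the restriction $\Pi_2|_S$ may be dominant, so $\Pi_2(\widetilde X_{\sing})$, and hence $\Pi_2(B)$, can be all of $\P(V^*)^k$. What your stratification argument actually proves is that the generic fiber of $\Pi_2|_{\widetilde X_{\sing}}$ has dimension at most $n-k-1$. That is the right ingredient, but the conclusion to draw from ``a top $(n-k)$-dimensional component of the fiber lies in $B$'' is not $\xi\in\Pi_2(B)$; it is $\xi\in\{\eta:\dim(B\cap\Pi_2^{-1}(\eta))\geq n-k\}$, and it is \emph{this} jump locus you must show is proper analytic. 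There is also a secondary technical point: the analyticity of $B$ as you define it is not automatic, since the closure in $\widetilde X$ of the critical locus of $\Pi_2|_{\widetilde X_{\reg}}$ need not be analytic without further argument (Remmert--Stein requires a dimension hypothesis you do not have), and Sard applied to the non-proper map $\Pi_2|_{\widetilde X_{\reg}}$ yields only a measure-zero, not an analytic, set of critical values.

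The paper sidesteps both problems by passing to a resolution of singularities $\tau:\widehat X\to\widetilde X$. Then $\widehat\Pi_2:=\Pi_2\circ\tau$ is a proper holomorphic map between compact manifolds, so its set of critical values $\Sigma_2$ is analytic (Remmert) and of measure zero (Sard), hence a proper analytic subset; and the exceptional set $E\subset\widehat X$ is an honest analytic subset with $\dim E<\dim\widehat X$, so the locus $\Sigma_3$ of $\xi$ with $\dim(E\cap\widehat\Pi_2^{-1}(\xi))\geq n-k$ is proper analytic by the same upper semi-continuity you invoke. For $\xi\notin\Sigma_1\cup\Sigma_2\cup\Sigma_3$ the fiber $\widehat\Pi_2^{-1}(\xi)$ is smooth of dimension $n-k$, and since $\tau$ is biholomorphic off $E$, transversality of $\widetilde X\cap\Pi_2^{-1}(\xi)$ holds outside a set of dimension $\leq n-k-1$. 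Your approach can be repaired along the lines indicated above, but the resolution makes the bookkeeping clean.
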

\proof
If $\Pi_2$ is not surjective, the lemma is clear because $\Sigma$ 
is contained in $\Pi_2(\widetilde X)$ which is a proper analytic 
subset of $\P(V^*)^k$. Assume that $\Pi_2$ is surjective. 
So $ \widetilde X\cap \Pi_2^{-1}(\xi)\not=\emptyset$ 
for every $\xi$. 
Observe that the set $\Sigma_1$ of $\xi$ satisfying (a)
is a proper analytic subset of $\P(V^*)^k$. 
Thus, we only consider parameters $\xi$ outside $\Sigma_1$. 

Let $\tau:\widehat X\to\widetilde X$ be a singularity resolution 
for $\widetilde X$ and define $\widehat\Pi_2:=\Pi_2\circ\tau$. 
The last map is a holomorphic surjective map between compact 
complex manifolds. 
So by Bertini-Sard type theorem, there is a proper analytic 
subset $\Sigma_2$ of $\P(V^*)^k$ such that $\widehat\Pi_2$
is a submersion outside $\widehat\Pi_2^{-1}(\Sigma_2)$.
Indeed, $\Sigma_{2}$ is the set of critical values of $\widehat\Pi_2$
which is analytic. Sard's theorem implies that it is a proper 
analytic subset of $\P(V^*)^k$.
It follows that for $\xi\not\in\Sigma_1\cup\Sigma_2$
the fiber $\widehat\Pi_2^{-1}(\xi)$ has dimension $n-k$,
i.e., the minimal dimension for the fibers of $\widehat \Pi_2$. 
So $\Pi_2^{-1}(\xi)$, which is the image of 
$\widehat\Pi_2^{-1}(\xi)$ by $\tau$, is also of minimal dimension 
$n-k$. Therefore, such parameters $\xi$ do not satisfy (b). 

Let $E$ denote the exceptional analytic subset in $\widehat X$,
i.e., the pull-back of the singularities of $\widetilde X$ by 
$\tau$. Since $\dim E<\dim \widehat X$, arguing as above, 
we obtain a proper analytic subset $\Sigma_3$ of $\P(V^*)^k$ 
such that for $\xi$ outside $\Sigma_3$, the dimension of 
$E\cap \widehat \Pi_2^{-1}(\xi)$ is at most equal to $n-k-1$. 
Since $\tau$ is locally bi-holomorphic outside $E$, 
for $\xi\not\in \Sigma_1\cup\Sigma_2\cup\Sigma_3$, 
the intersection $\widetilde X\cap\Pi_2^{-1}(\xi)$ is 
transverse outside the image by $\tau$ of 
$E\cap \widehat\Pi_2^{-1}(\xi)$, which is of dimension 
at most $n-k-1$. Such parameters $\xi$ do not satisfy (c).
The lemma follows. 
\endproof

From now on, we only consider  $\xi\in\P(V^*)^k\setminus\Sigma$, 
where $\Sigma$ is defined in Lemma \ref{L:exc}. 
The current $[(\Pi_2)^*(\xi)]$ 
is then well-defined and we have 
\begin{align}\label{eq:3.19}
\Phi^*[H_\xi]=(\Pi_1)_*\big([(\Pi_2)^*(\xi)]\big).
\end{align}

There exists $C>0$ such that for any test smooth real 
$(n-k,n-k)$-form $\varphi$ 
on $X$ with $\|\varphi\|_{\Cc^2}\leq C$ we have
\begin{equation}\label{e:bd}
-\omega_X^{n-k+1}\leq \ddc \varphi\leq \omega_X^{n-k+1}.
\end{equation}
Take such a $\varphi$ and define $v:=(\Pi_2)_*(\Pi_1)^*(\varphi)$. 
This is a function on $\P(V^{*})^k$ whose value at 
$\xi\in \P(V^{*})^k\setminus\Sigma$ 
is the integration of $(\Pi_1)^*(\varphi)$ on the fiber 
$\Pi_2^{-1}(\xi)$. So, we have
\begin{align}\label{eq:3.20}
v(\xi)=\langle \Phi^*[H_\xi],\varphi\rangle\, .
\end{align}
Hence, $v$ is continuous on $\P(V^{*})^k\setminus\Sigma$. 
Since the form $\omega_\FS$ on $\P(V)$ is the average of 
$[H_{\xi_i}]$ with respect to the measure 
$\sigma_\FS$ on $\xi_i\in \P(V^*)$, the average of $[H_\xi]$ 
with respect to the measure 
$\sigma_\MP$ on $\xi\in \P(V^*)^k$ is equal to $\omega_\FS^k$. 
Thus, the mean value of $v$ is
\begin{align}\label{eq:3.21}
M_{v}:=\int vd\sigma_\MP
=\langle\Phi^*(\omega_\FS^k),\varphi\rangle\,.
\end{align}
So we need to prove that $|v- M_{v}|\leq \gamma m_{k-1}$ 
outside a set $E_{\gamma}$ of 
$\sigma_\MP$-measure less than $c\, d^{m}e^{-\gamma/c}$ 
which does not depend on $\varphi$. 
This implies Theorem \ref{th_abstract_high}.

Define 
\begin{align}\label{eq:3.23}
T:=(\Pi_2)_*(\Pi_1)^*(\omega_X^{n-k+1}).
\end{align}
This is a positive closed $(1,1)$-current on $\P(V^*)^k$ and 
we have, 
thanks to the above property \eqref{e:bd} of $\ddc \varphi$, that
\begin{align}\label{eq:3.24}
-T\leq \ddc v\leq T.
\end{align}
Let $\vartheta$ be the mass of $T$. We can apply 
Lemma \ref{lemma_key} to the function 
$u:=\vartheta^{-1}(v- M_{v})$, $S:=\vartheta^{-1}T$ and to 
$\vartheta^{-1} \gamma m_{k-1}$ instead of $\gamma$. 
We can take $E_{\gamma}=\Sigma\cup E'$ 
which does not depend on $\varphi$. Since $\Sigma$ is 
of measure 0, in order to get 
from Lemma \ref{lemma_key} the desired estimate on 
$\sigma_\MP(E_{\gamma})=\sigma_\MP(E')$, 
it is enough to show that $\vartheta$ is bounded above 
by $m_{k-1}$ times a constant which only depends on $k$. 

We have
\begin{align}\label{eq:3.25}
\|T\|=\big\langle (\Pi_2)_*(\Pi_1)^*(\omega_X^{n-k+1}), 
\omega_\MP^{kd-1}\big\rangle=
\big\langle \omega_X^{n-k+1}, (\Pi_1)_*(\Pi_2)^*
(\omega_\MP^{kd-1})\big\rangle.
\end{align}
Let $\widetilde{\P(V)}$ denote the set of points 
$(x,\xi)\in \P(V)\times \P(V^*)^k$ 
such that $x\in H_{\xi_i}$ for every $i$. 
Denote by $\Pi_1'$ and $\Pi_2'$ 
the natural projections from $\widetilde{\P(V)}$ onto $\P(V)$ 
and $\P(V^*)^k$. By construction, we have 
\begin{align}\label{eq:3.26}
(\Pi_1)_*(\Pi_2)^*(\omega_\MP^{kd-1})
= \Phi^*\big((\Pi_1')_*(\Pi_2')^*(\omega_\MP^{kd-1})\big).
\end{align}
By definition of $m_{k-1}$, it is enough to check that 
$(\Pi_1')_*(\Pi_2')^*(\omega_\MP^{kd-1})$ is bounded by 
$\omega_\FS^{k-1}$ 
times a constant depending only on $k$.

We obtain with a direct computation
\begin{align}\label{eq:3.27}
\omega_\MP^{kd-1}  = c_{d,k}^{kd-1} \Big(\begin{array}{c}
dk-1 \\ 
d-1
\end{array} \Big)
\Big(\begin{array}{c}
dk-d \\ 
d
\end{array} \Big)
\cdots
\Big(\begin{array}{c}
2d \\ 
d
\end{array} \Big)
\sum_{i=1}^k \Theta_i 
 =  (c_{d,k})^{-1} k^{-1}\sum_{i=1}^k \Theta_i ,
\end{align}
where 
\begin{align}\label{eq:3.28}
\Theta_i:=\pi_1^*(\omega_\FS^d)\wedge\ldots\wedge 
\pi_{i-1}^*(\omega_\FS^d)
\wedge \pi_{i}^*(\omega_\FS^{d-1})\wedge
\pi_{i+1}^*(\omega_\FS^d)\wedge\ldots 
\wedge \pi_k^*(\omega_\FS^d).
\end{align}
We will show that 
$(\Pi_1')_*(\Pi_2')^*(\Theta_i)=\omega_\FS^{k-1}$ 
and this implies the theorem.

For simplicity, assume that $i=1$. 
Since $(\Pi_1')_*(\Pi_2')^*(\Theta_1)$ 
is invariant under the action of the unitary group, 
it is equal to a constant times $\omega_\FS^{k-1}$. 
So we only have to check that the constant is 1 or equivalently 
the mass of $(\Pi_1')_*(\Pi_2')^*(\Theta_1)$ is 1. 
Recall that the mass of a positive 
closed current depends only on its cohomology class. 
Therefore, in the definition of $\Theta_1$, we can replace 
$\omega_\FS^{d-1}$ with the current of integration 
on a generic projective line $\ell$ and each $\omega_\FS^d$ 
with the Dirac mass 
of a generic point, say $\xi_j$, for $j=2,\ldots,k$. 
The current $\Theta_1$ is in the same cohomology class as 
the current of integration on 
$$\ell\times \{\xi_2\} \times\cdots\times\{\xi_k\}$$
that we denote by $\Theta_1'$. 

It is not difficult to see that $(\Pi_1')_*(\Pi_2')^*(\Theta_1')$ 
is the current of 
integration on the projective subspace  
$H_{\xi_2}\cap H_{\xi_3}\cap\ldots\cap H_{\xi_k}$. 
So it is clear that its mass is equal to 1. 
This completes the proof of the Theorem \ref{th_abstract_high}. 
\end{proof}

The following property of the constants $m_k$ is useful.

\begin{lemma} \label{le:mk}
There is $c>0$ depending only on $(X,\omega_X)$ such that 
$m_k\leq c\, m_1^k$
for $1\leq k\leq n$. 
\end{lemma}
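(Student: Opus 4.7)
The plan is to pass to a common resolution of the graph of $\Phi$ and apply the mixed Khovanskii--Teissier (log-concavity) inequality. Let $\Gamma\subset X\times\P(V)$ be the graph of $\Phi$ and let $\tau\colon \widehat X\to\Gamma$ be a desingularization. Set
\[
\hat\pi := \pi_Y\circ \tau\colon \widehat X\to X,\qquad \hat\Phi := \pi_Z\circ \tau\colon \widehat X\to \P(V).
\]
Then $\widehat X$ is a compact K\"ahler manifold of dimension $n$, $\hat\pi$ is a bimeromorphic morphism of degree one, and $\hat\Phi$ is holomorphic. Define the smooth semipositive $(1,1)$-forms
\[
\alpha := \hat\Phi^*\omega_\FS,\qquad \beta := \hat\pi^*\omega_X;
\]
their cohomology classes are nef on $\widehat X$. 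By formula \eqref{e:ur1} together with the fact that $\tau$ is biholomorphic on an open dense set, one obtains
\[
m_k = \langle \Phi^*(\omega_\FS^k),\,\omega_X^{n-k}\rangle = \int_{\widehat X} \alpha^k\wedge\beta^{n-k},\qquad 1\le k\le n,
\]
and likewise $a_0:=\int_{\widehat X}\beta^n=\int_X\omega_X^n=:V_X>0$, a quantity depending only on $(X,\omega_X)$.

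Next I apply the mixed Khovanskii--Teissier inequality on $\widehat X$ to the two nef classes $\{\alpha\}$ and $\{\beta\}$: the sequence $a_i:=\int_{\widehat X}\alpha^i\wedge\beta^{n-i}$ is logarithmically concave, i.e.\ $a_i^2\ge a_{i-1}\,a_{i+1}$ for $1\le i\le n-1$. The inequality is classical for K\"ahler classes (see e.g.\ Demailly's analytic methods book); its extension to nef classes follows by replacing $\beta$ with $\beta+\varepsilon\,\omega_{\widehat X}$ for a fixed K\"ahler form $\omega_{\widehat X}$ on $\widehat X$ and letting $\varepsilon\to 0$. Log-concavity implies that the ratios $a_{i+1}/a_i$ are non-increasing, so a telescoping argument from $i=0$ to $i=k-1$ gives
\[
\frac{a_k}{a_0}\le \Big(\frac{a_1}{a_0}\Big)^{\!k},\qquad\text{equivalently}\qquad m_k = a_k \le \frac{m_1^k}{V_X^{k-1}}.
\]
Setting $c:=\max_{1\le k\le n}V_X^{-(k-1)}$ yields $m_k\le c\,m_1^k$ with a constant depending only on $(X,\omega_X)$.

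The only step requiring any real care is the extension of Khovanskii--Teissier from K\"ahler to merely nef classes, handled by the approximation above; the remaining ingredients---existence of a K\"ahler desingularization of $\Gamma$, the equality $\int_{\widehat X}\beta^n=\int_X\omega_X^n$ coming from $\deg\hat\pi=1$, and the identification of $m_k$ as an intersection number via Lemma \ref{lemma_pullback}---are routine.
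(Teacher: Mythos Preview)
Your proof is correct and follows a genuinely different route from the paper's. The paper works directly on $X$: it observes that $\Phi^*(\omega_\FS^k)=\Phi^*(\omega_\FS)^k$ on the Zariski open set where $\Phi$ is holomorphic, then invokes a mass inequality of Dinh--Nguy\^en (\cite{DN}) stating that for positive closed currents $T,S$ on $X$ which are smooth on an open set $U$ one has $\|T\wedge S\|_U\le C\,\|T\|\,\|S\|$ with $C$ depending only on $(X,\omega_X)$; Skoda's extension theorem then allows an induction yielding $m_k\le C^{k-1}m_1^k$. Your argument instead lifts everything to a K\"ahler desingularization of the graph, identifies the $m_k$ as intersection numbers of the nef classes $\{\hat\Phi^*\omega_\FS\}$ and $\{\hat\pi^*\omega_X\}$, and applies Khovanskii--Teissier log-concavity. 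What your approach buys is an explicit constant $c=\max_{1\le k\le n}V_X^{-(k-1)}$ determined by the volume of $X$, whereas the paper's constant comes from an abstract mass estimate; the paper's method, on the other hand, stays entirely in the language of currents and avoids resolution of singularities and Hodge-theoretic input. Two small remarks: to pass Khovanskii--Teissier from K\"ahler to nef classes you should perturb \emph{both} $\alpha$ and $\beta$ by $\varepsilon\,\omega_{\widehat X}$ (not only $\beta$), and in the telescoping step you implicitly use $a_0=V_X>0$; if some later $a_j$ vanishes, log-concavity together with $a_{j-1}>0$ forces all subsequent $a_i$ to vanish and the inequality is trivial. These are cosmetic points; the argument is sound.
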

\proof
Observe that by Lemma \ref{lemma_pullback},
the currents $\Phi^*(\omega_\FS^k)$ are given 
by $L^1$ forms and they are 
smooth on some Zariski open set $U$ of $X$ where $\Phi$ is 
holomorphic. Moreover, we have 
$\Phi^*(\omega_\FS^k)= \Phi^*(\omega_\FS)^k$ on $U$. 
Since $\Phi^*(\omega_\FS^k)$ is given by an $L^1$ form, 
it has no mass outside $U$.

By \cite[Lemma 2.2]{DN}, there is  $C>0$ depending only on 
$(X,\omega_X)$ such that if $T$ and $S$ are positive closed 
currents on $X$ which are smooth in an open set $U$ 
then the mass $\|T\wedge S\|_U$ of $T\wedge S$ on $U$
is bounded by $C\|T\|\|S\|$. 
By Skoda's extension theorem \cite[Th\'eor\`eme\,1]{Skoda}, 
positive closed currents of finite mass 
can be  extended by 0 through analytic sets. 
So if $U$ is a Zariski open set, 
the form $T\wedge S$ extends by 0 to a positive closed current 
on $X$ with mass bounded by $C\|T\|\|S\|$. 
This allows us to apply inductively the mass estimate for 
$T\wedge S$ to the case of product of 
several positive closed currents.
 
Observe that $\Phi^*(\omega_\FS^k)
= \Phi^*(\omega_\FS)\wedge \Phi^*(\omega_\FS^{k-1})$ on $U$,
so, by induction on $k$, we deduce from 
the above discussion 
that $m_k\leq C^{k-1} m_1^k$. 
The lemma follows.
\endproof

In the case where $V=H^0_{(2)}(X,L^p\otimes F)^*$, 
we have $m_1=O(p)$ 
and therefore $m_k=O(p^{k})$.
This together with Theorem \ref{th_abstract_high} 
imply the following corollary. 
Consider the Kodaira map 
$\Phi_p:X\dashrightarrow\P\big(H^0_{(2)}(X,L^p\otimes F)^*\big)$ 
defined in \eqref{e:kod}. The pull-back 
$\Phi_p^*(\omega_\FS^k)$ of the current $\omega_\FS^k$ is 
given by an $L^1$ form equal to $\omega_p^k$ on a dense Zariski 
open set (here $\omega_p$ is the Fubini-Study current \eqref{fs}). 

\begin{corollary} \label{cor_abstract_2}
There are $c=c(X,L,F)>0$ and $m=m(X,L,F)>0$  depending only 
on $(X,\omega_X)$ and $c_1(L)$, $c_1(F)$, 
with the following property. For any sequence 
$\lambda_p$\,, there are subsets $E_p$ of 
$\P(H_{(2)}^0(X,L^p\otimes F))^k$ such that for $p$ large enough
\begin{enumerate}
\item[(a)] $\sigma_p(E_p)\leqslant c \, p^{m} e^{-\lambda_p/c}$.
\item[(b)] For $S_p=([s_p^{(1)}],\ldots,[s_p^{(k)}])$ in 
$\P(H^0_{(2)}(X,L^p\otimes F))^k\setminus E_p$\,, 
we have
\begin{equation}\label{est_abs_2}
\Big\|\frac{1}{p^k} \big[s_p^{(1)}=\ldots=s_p^{(k)}=0\big] 
-\frac{1}{p^k}\Phi_p^*(\omega_\FS^k)\Big\|_{-2} 
\leqslant \frac{\lambda_p}{p}\cdot
\end{equation}
\end{enumerate}
In particular, when  
\begin{equation}\label{est_abs_21}
\liminf_{p\to\infty} \frac{\lambda_p}{\log p}>(m+1)c\,,
\end{equation}
for $\sigma_\infty$-almost every sequence 
$(S_p)\in\Omega_k(L,F)$, 
the above estimate holds for $p$ large enough. 
If $\frac{1}{p^k}\Phi_p^*(\omega_\FS^k)$ converge to a current 
$\Theta_k$ in some open set $U$ 
with speed $(c_p)$ as $p\to\infty$, 
then $\frac{1}{p^k}\big[s_p^{(1)}=\ldots=s_p^{(k)}=0\big] $ 
converge to $\Theta_k$ on $U$ with speed 
$\big(c_p+\lambda_p/p\big)$ as $p\to\infty$. 
\end{corollary}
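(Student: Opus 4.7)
The plan is to specialize Theorem~\ref{th_abstract_high} to $V = H^0_{(2)}(X,L^p\otimes F)^*$ and $\Phi = \Phi_p$, the Kodaira map of \eqref{e:kod}. Under the identification of $[s] \in \P(V^*) = \P(H^0_{(2)}(X,L^p\otimes F))$ with the hyperplane $H_{[s]} \subset \P(V)$, one checks that $\Phi_p^{-1}(H_{[s]}) = \Div(s)$: a point $x$ is sent by $\Phi_p$ to the hyperplane of sections vanishing at $x$, and this point lies in $H_{[s]}$ precisely when $s(x) = 0$. Consequently, for $\xi = ([s_p^{(1)}], \ldots, [s_p^{(k)}])$ the pullback $\Phi_p^*[H_\xi]$ coincides, as a current, with $[s_p^{(1)} = \ldots = s_p^{(k)} = 0]$.

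Next I would convert the abstract estimates into the normalized form of the corollary. From the observation immediately preceding the statement, $m_1 = O(p)$, so Lemma~\ref{le:mk} gives $m_{k-1} \leq C p^{k-1}$ with $C$ depending only on $(X,\omega_X)$, $c_1(L)$, $c_1(F)$. Setting $\gamma_p := \lambda_p/C$ in Theorem~\ref{th_abstract_high}(b) yields $\|\Phi_p^*[H_\xi] - \Phi_p^*(\omega_\FS^k)\|_{-2} \leq \lambda_p\, p^{k-1}$ outside an exceptional set $E_p \subset \P(V^*)^k$, and dividing through by $p^k$ produces \eqref{est_abs_2}. Part~(a) then follows from Theorem~\ref{th_abstract_high}(a) combined with the Siegel-lemma bound $d_p \leq C' p^n$ recalled before Corollary~\ref{cor_abstract}: after absorbing $C$ and $C'$ into the constants and replacing the abstract exponent $m$ by $mn$, we obtain $\sigma_p(E_p) \leq c\, p^m\, e^{-\lambda_p/c}$ for new constants $c, m$ depending only on the stated data.

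For the almost-sure claim, the hypothesis $\liminf \lambda_p/\log p > (m+1)c$ forces $\sigma_p(E_p) = O(p^{-1-\delta})$ for some $\delta>0$, so $\sum_p \sigma_p(E_p) < \infty$, and a Borel-Cantelli argument identical to the one used in the proof of Corollary~\ref{cor_abstract} (with the product exceptional set defined as in \eqref{eq:3.6}) shows that $\sigma_\infty$-a.e.\ sequence $(S_p)$ lies outside $E_p$ for all large $p$. The final statement on speed is then immediate from the triangle inequality applied to $\tfrac{1}{p^k}[s_p^{(1)} = \ldots = s_p^{(k)} = 0] - \Theta_k$ together with \eqref{est_abs_2} and the assumed convergence speed $(c_p)$ of $\tfrac{1}{p^k}\Phi_p^*(\omega_\FS^k)$ on $U$. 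There is no genuine obstacle here: the Large Deviation estimate of Theorem~\ref{th_abstract_high} has already carried out the hard analytic work, and the only thing to watch is the bookkeeping of constants in the exponential together with the correct use of the polynomial growth $m_{k-1} = O(p^{k-1})$ that turns the abstract error $\gamma m_{k-1}$ into the normalized bound $\lambda_p/p$.
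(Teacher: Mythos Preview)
Your proposal is correct and follows essentially the same approach as the paper: the paper's proof is in fact just the sentence ``In the case where $V=H^0_{(2)}(X,L^p\otimes F)^*$, we have $m_1=O(p)$ and therefore $m_k=O(p^{k})$. This together with Theorem~\ref{th_abstract_high} imply the following corollary,'' and you have filled in precisely the details needed to unpack this (the identification $\Phi_p^*[H_\xi]=[s_p^{(1)}=\ldots=s_p^{(k)}=0]$, the choice $\gamma_p=\lambda_p/C$, the Siegel bound on $d_p$, the Borel--Cantelli step, and the triangle inequality for the speed).
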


Note that the constants in the corollary can be chosen 
independently of $k$ because $1\leq k\leq n=\dim X$. 
The corollary can be applied in the situation of
Corollaries \ref{cor_cv_holder} and \ref{cor_cv_holder2}.
In that cases, we have $\Theta_k=c_1(L,h^L)^k$ on $U$. 

\section{Semi-positive curved metrics on big line bundles} 
\label{section_hyp}
Let $(X,\omega_X)$ be a compact K\"ahler manifold 
of dimension $n$. Let $(L,h^L)$ be a holomorphic 
line bundle endowed with a singular metric $h^L$.
Fix a smooth Hermitian metric $h^L_0$ on $L$ and let 
$\alpha=c_1(L,h^L_0)$ 
denote its first Chern form. We can write
\begin{equation}\label{e:1}
h^L=e^{-2\varphi} h^L_0\, , \text{ i.e., } 
|s|_{h^L}^2=|s|_{h^L_0}^2 e^{-2\varphi} 
\text{ for any section } s \text{ of } L\, ,
\end{equation}
where $\varphi$ is an $L^1$ function on $X$ with values
in $\R\cup\{\pm \infty\}$. 
We assume that the curvature of $h^L$ is semipositive, that is, 
$c_1(L,h^L)=\ddc\varphi+\alpha$ is a positive current. 
So the function $\varphi$ is $\alpha$-psh, i.e., $\varphi$
is quasi-psh and satisfies $\ddc\varphi\geq -\alpha$. Define 
\begin{equation}\label{e:1.1}
\omega:=c_1(L,h^L)=\ddc\varphi+\alpha.
\end{equation}
We also assume that the line bundle $L$ is big. So, there is a metric 
\begin{equation}\label{e:2}
\widetilde{h}^L=e^{-2\varphi'} h^L_0
\end{equation}
 such that 
$\omega':=\ddc\varphi'+\alpha\geq   \varepsilon\omega_X$ 
for some $\varepsilon>0$ (cf.\ \cite[Theorem\,2.3.30]{MM07}). 
Let $B_{p}$ be the Bergman function in (\ref{e:Bergfcn}) 
associated with $(L^p, (h^L)^{\otimes p})$. The function 
\begin{equation}\label{e:21}
\varphi_p:=\varphi+\frac1{2p}\log B_p
\end{equation}
is quasi-psh and by \eqref{e:Bergfcn2} satisfies
\begin{equation}\label{e:1.2}
\frac1p\,\omega_p=\ddc\varphi_p+\alpha,
\end{equation}
where $\omega_{p}$ are the Fubini-Study currents \eqref{fs}.
We call the functions $\varphi_{p}$ 
\emph{global Fubini-Study weights}.

We we will use the $L^2$-estimates of 
Andreotti-Vesentini-H\"ormander for $\dbar$ in the following 
form (cf.\ \cite[Th\'eor\`eme 5.1]{D82}).
\begin{theorem}[$L^2$-estimates for 
 $\overline\partial$]\label{T:l2}
(i) Let $(X,\omega_{X})$ be a K\"ahler manifold of dimension $n$
which admits a complete K\"ahler metric. 
Let $(L,h^L)$ be a singular Hermitian holomorphic line bundle 
and let $\lambda:X\to[0,+\infty)$ 
be a continuous function such that 
$c_1(L,h^L)\geq\lambda\omega_{X}$.
Then for any form $g\in L_{n,1}^2(X,L,loc)$ satisfying
\begin{align}\label{eq:4.6}
{\overline\partial}g=0\,,\quad 
\int_X\lambda^{-1}|g|^2\,\omega_{X}^n<+\infty
\end{align}
there exists $u\in L_{n,0}^2(X,L)$ with 
$\overline\partial u=g$ and
\begin{align}\label{eq:4.7}
\int_X|u|^2\,\omega_{X}^n
\leq\int_X\lambda^{-1}|g|^2\,\omega_{X}^n\,.
\end{align}

(ii) Let $(X,\omega_X)$ be a complete K\"ahler manifold of 
dimension $n$ and 
let $(L,h^L)$ be a singular Hermitian line bundle. 
Assume that there exists $C>0$ such that
\[c_1(L,h^L)+c_1(K^{\ast}_X,h^{K^{\ast}_X})\geq  C\omega_X\]
where $h^{K^{\ast}_X}$ is the metric  induced  by $\omega_X$ 
on the anti-canonical bundle 
$K^{\ast}_X$. Then for any form $g\in L_{0,1}^2(X,L)$ 
satisfying  $\dbar g=0$ there exists $u\in L_{0,0}^2(X,L)$ with 
\begin{align}\label{eq:4.9}
\overline\partial u=g\,,\quad\int_X|u|^2\, \omega_X^n
\leqslant\frac1C\int_X|g|^2\, \omega_X^n\,.
\end{align}
\end{theorem}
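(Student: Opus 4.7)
The plan is to follow the classical Andreotti--Vesentini--H\"ormander $L^{2}$-method, in the version refined by Demailly \cite{D82} to accommodate singular Hermitian metrics. First I would reduce (ii) to (i). The canonical isomorphism between $L$-valued $(0,q)$-forms and $(L\otimes K_{X}^{*})$-valued $(n,q)$-forms translates the curvature hypothesis of (ii) into $c_{1}(L\otimes K_{X}^{*},h^{L}\otimes h^{K_{X}^{*}})\geq C\omega_{X}$; this is the hypothesis of (i) applied to $L\otimes K_{X}^{*}$ with the constant function $\lambda\equiv C$, so it suffices to prove (i).

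The analytic input for (i) is the Bochner--Kodaira--Nakano identity. For a smooth, compactly supported $L$-valued $(n,1)$-form $u$ on a complete K\"ahler manifold one has the Nakano inequality
\[
\bigl\|\dbar u\bigr\|^{2}+\bigl\|\dbar^{*}u\bigr\|^{2}
\geq \int_{X}\bigl\langle[\,c_{1}(L,h^{L}),\Lambda]u,u\bigr\rangle\,\omega_{X}^{n},
\]
where $\Lambda$ is the adjoint of multiplication by $\omega_{X}$. In bidegree $(n,1)$ the commutator on the right simplifies to the trace of the curvature, and the assumption $c_{1}(L,h^{L})\geq\lambda\omega_{X}$ yields the a priori estimate
\[
\int_{X}\lambda|u|^{2}\,\omega_{X}^{n}\leq\bigl\|\dbar u\bigr\|^{2}+\bigl\|\dbar^{*}u\bigr\|^{2}.
\]
Completeness of the auxiliary K\"ahler metric ensures that $\dbar^{*}$ agrees with its formal adjoint on a dense domain, and the inequality extends to all $u\in\Dom(\dbar)\cap\Dom(\dbar^{*})$ by a cut-off argument based on an exhaustion of $X$.

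With the a priori estimate in hand, the solution is produced by Hilbert space duality. Given $g\in L^{2}_{n,1}(X,L)$ with $\dbar g=0$ and $\int_{X}\lambda^{-1}|g|^{2}\,\omega_{X}^{n}<\infty$, the antilinear functional $\dbar^{*}v\mapsto(v,g)$ is well-defined on $\dbar^{*}\!\bigl(\Dom(\dbar^{*})\bigr)$, and Cauchy--Schwarz combined with the a priori estimate gives
\[
|(v,g)|^{2}\leq\Bigl(\int_{X}\lambda^{-1}|g|^{2}\,\omega_{X}^{n}\Bigr)\Bigl(\int_{X}\lambda|v|^{2}\,\omega_{X}^{n}\Bigr)\leq\Bigl(\int_{X}\lambda^{-1}|g|^{2}\,\omega_{X}^{n}\Bigr)\bigl\|\dbar^{*}v\bigr\|^{2}.
\]
The functional is therefore bounded; Hahn--Banach and Riesz representation deliver $u\in L^{2}_{n,0}(X,L)$ with $\dbar u=g$ and $\|u\|^{2}\leq\int_{X}\lambda^{-1}|g|^{2}\,\omega_{X}^{n}$.

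The main obstacle is that the Bochner--Kodaira formula is pointwise and requires $h^{L}$ to be smooth, whereas the theorem allows $h^{L}$ to be singular. To bypass this, I would regularize the weights of $h^{L}$ using Demailly's smoothing procedure, obtaining smooth metrics $h^{L}_{\varepsilon}$ whose local weights decrease to those of $h^{L}$ and whose curvatures satisfy $c_{1}(L,h^{L}_{\varepsilon})\geq(\lambda-\varepsilon)\omega_{X}$. Solving the equation with the smooth metric $h^{L}_{\varepsilon}$ yields $u_{\varepsilon}$ with controlled norm; since the approximating weights dominate the original one, the $L^{2}(h^{L})$-norms of $u_{\varepsilon}$ are uniformly bounded, and extracting a weak limit together with Fatou's lemma delivers the stated estimate for the singular metric. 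Globality of the regularization across changes of frame and uniform control of the $\varepsilon$-loss in the curvature bound are the technical points where Demailly's original argument in \cite{D82} is essential.
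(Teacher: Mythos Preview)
The paper does not prove this theorem; it is quoted verbatim from Demailly \cite[Th\'eor\`eme~5.1]{D82} as a tool, with no argument given. Your sketch is precisely the classical Andreotti--Vesentini--H\"ormander scheme in Demailly's singular-metric formulation (Bochner--Kodaira--Nakano a~priori estimate, completeness to identify formal and Hilbert adjoints, duality via Riesz, and regularization of the weight), so it matches the cited source; the only step you elide is the orthogonal decomposition $v=v_1+v_2$ with $v_1\in\ker\dbar$ needed to drop the $\|\dbar v\|^{2}$ term from the a~priori inequality before applying Cauchy--Schwarz, but that is standard.
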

We will also need the following.
\begin{lemma} \label{lemma_max_psh}
Let $\psi$ be a negative psh function on a neighborhood of 
the unit ball $B$ in $\C^n$. Define 
\begin{align}\label{eq:4.10}
\psi'(z):=\sup_{B(z,\rho^4)} \psi,
\end{align}
where $B(z,\rho^4)$ denotes the ball of center $z$ and 
radius $\rho^4$. 
Then there is $c>0$ depending on $\psi$ 
such that for $\rho$ small enough
\begin{align}\label{eq:4.11}
\Big|\int_{B} \psi' dZ\Big| \geq  \Big|\int_{B} \psi dZ\Big|
- c\rho,
\end{align}
where $dZ$ denotes the Lebesgue measure on $\C^n$. 
\end{lemma}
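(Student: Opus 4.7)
The plan is to prove the equivalent inequality $\int_B(\psi'-\psi)\, dZ\leq c\rho$. Indeed $\psi'\geq \psi$ pointwise and both are $\leq 0$, so both integrals are non-positive and
$\big|\int_B \psi'\big|=\big|\int_B \psi\big|-\int_B(\psi'-\psi)\, dZ$; both integrals are finite since $\psi$ is psh on a neighborhood of $\overline B$, hence locally integrable.

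I would first introduce the ball-average $\widetilde\psi_s(z):=|B(0,s)|^{-1}\int_{B(z,s)}\psi\, dZ$, which is well-defined for small $\rho$ since $\psi$ lives on a neighborhood of $\overline{B}$. The crucial estimate is a one-sided comparison between concentric averages: for $|w-z|\leq r$ and $s>r$, the inclusion $B(z,s-r)\subset B(w,s)$ combined with $\psi\leq 0$ (shrinking the domain of integration increases the integral of a non-positive function) gives
\begin{equation*}
\widetilde\psi_s(w) \leq \frac{|B(0,s-r)|}{|B(0,s)|}\,\widetilde\psi_{s-r}(z) = (1-r/s)^{2n}\,\widetilde\psi_{s-r}(z).
\end{equation*}
Combining with the sub-mean value inequality $\psi(w)\leq\widetilde\psi_s(w)$ and taking the supremum over $w\in B(z,r)$ produces
\begin{equation*}
\psi'(z) \leq (1-r/s)^{2n}\,\widetilde\psi_{s-r}(z).
\end{equation*}
Since $\widetilde\psi_{s-r}\leq 0$, Bernoulli's inequality yields $\psi'(z)-\widetilde\psi_{s-r}(z)\leq 2n(r/s)\,|\widetilde\psi_{s-r}(z)|$.

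Next I would integrate over $z\in B$. Using $\widetilde\psi_{s-r}\geq \psi$ (both $\leq 0$), one has $\int_B|\widetilde\psi_{s-r}|\, dZ\leq \int_B|\psi|\, dZ$, so
\begin{equation*}
\int_B \psi'\, dZ - \int_B \widetilde\psi_{s-r}\, dZ \leq 2n\,\frac{r}{s}\,\Big|\int_B \psi\, dZ\Big|.
\end{equation*}
The remaining step is a boundary-layer estimate for the averaging: by Fubini,
\begin{equation*}
\int_B \widetilde\psi_t\, dZ-\int_B\psi\, dZ = \int \psi(w)\bigg[\frac{|B\cap B(w,t)|}{|B(0,t)|}-\mathbf{1}_B(w)\bigg]\, dW,
\end{equation*}
and the bracket is supported on the boundary shell $\{w:\dist(w,\partial B)<t\}$, whose Lebesgue measure is $O(t)$. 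Since psh functions lie in $L^2_{\loc}$, Cauchy--Schwarz gives $0\leq \int_B\widetilde\psi_t\, dZ-\int_B\psi\, dZ\leq C\sqrt{t}$ with $C$ depending on $\psi$.

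Choosing $r=\rho^4$ and $s=\rho^2$ balances the two errors: $r/s=\rho^2$ and $\sqrt{s-r}\leq\rho$, so adding the bounds gives $\int_B(\psi'-\psi)\, dZ\leq c\rho$ for some $c>0$ depending on $\psi$. The main tactical point is precisely the choice of the intermediate scale $s$: it must be large enough that $(1-r/s)^{2n}$ is close to $1$, which forces $s\gg r=\rho^4$, yet small enough that the boundary shell of width $s$ contributes only $O(\rho)$. The scale $s=\rho^2$ puts both error terms on the same order, yielding the desired $O(\rho)$ bound.
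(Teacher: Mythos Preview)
Your proof is correct and follows a genuinely different route from the paper's.

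Both arguments share the same architecture: compare $\psi'$ with the average of $\psi$ over a ball of the intermediate scale $\rho^2$, then control the boundary discrepancy via Cauchy--Schwarz and the fact that psh functions are in $L^2_{\loc}$. The difference lies in how the first comparison is obtained. The paper fixes $x\in B(z,\rho^4)$, picks $y\in B(z,2\rho^4)$ realizing $\psi'(x)=\psi(y)$, and uses a M\"obius automorphism $\tau$ of the ball $B(z,\rho^2)$ sending $y$ to the center; since $\|\tau-\id\|_{\Cc^1}=O(\rho)$, the Jacobian satisfies $\tau^*(dZ)\geq (1-c\rho)\,dZ$, and the sub-mean inequality applied to $\psi\circ\tau^{-1}$ gives $\psi'(x)\leq (1-c\rho)\cdot(\text{average of }\psi\text{ on }B(z,\rho^2))$. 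You instead use the elementary inclusion $B(z,s-r)\subset B(w,s)$ together with the negativity of $\psi$ to get $\widetilde\psi_s(w)\leq (1-r/s)^{2n}\widetilde\psi_{s-r}(z)$, and then the sub-mean inequality at $w$. Your route avoids the automorphism machinery entirely and is more self-contained; the paper's route is more geometric and makes explicit why the exponent $4$ in $\rho^4$ interacts with the scale $\rho^2$ (the displacement $\rho^4$ is of relative size $\rho^2$ in the ball of radius $\rho^2$, so the automorphism is $O(\rho^2)$-close to the identity). Either way the choice $s=\rho^2$ is what balances the two error terms.
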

\proof
In the last integral, we can replace $B$ by $B(0,1-2\rho^2)$ 
because by Cauchy-Schwarz inequality, 
the associated error is $O(\rho)$; we use here that psh functions 
are locally $L^2$-integrable. 
So, we have to prove that 
\begin{align}\label{eq:4.12}
\Big|\int_{B} \psi' dZ\Big| 
\geq  \Big|\int_{B(0,1-2\rho^2)} \psi dZ\Big|- c\rho.
\end{align}
It is enough to check for some (other) constant $c$ and for 
$\rho$ small enough that
\begin{align}\label{eq:4.14}
\Big|\int_{B} \psi' dZ\Big| 
\geq  (1-c\rho)\Big|\int_{B(0,1-2\rho^2)} \psi dZ\Big|.
\end{align}

\noindent
We claim that
\begin{align}\label{eq:4.15}
\rho^{-8n}\int_{B(z,\rho^4)}\psi' dZ
\leqslant (1-c\rho)\rho^{-4n} \int_{B(z,\rho^2)} \psi dZ .
\end{align}
The inequality can be rewritten as 
\begin{align}\label{eq:4.16}
\rho^{-8n}\int_{B(0,\rho^4)}\psi'(z+t) dZ(t)\leqslant 
(1-c\rho)\rho^{-4n} \int_{B(0,\rho^2)} \psi(z+t) dZ(t) .
\end{align}
Recall that $\psi$ and $\psi'$ are negative. 
Therefore, taking integrals in $z$ of both
 sides of the last inequality over $B(0,1-\rho^2)$ and using 
 Fubini's theorem for the variables $z$ and $t$, 
we obtain the desired inequality (\ref{eq:4.14}).
It remains to prove the claim.

Fix $x$ in $B(z,\rho^4)$. It is enough to check that 
\begin{align}\label{eq:4.17}
\psi'(x)\leqslant  
(1-c\rho)n!\pi^{-n}\rho^{-4n} \int_{B(z,\rho^2)} \psi dZ.
\end{align}
Note that the last expression is $1-c\rho$ times the average 
of $\psi$ on $B(z,\rho^2)$. 

By definition, there is $y\in B(z, 2\rho^4)$ such that
$\psi(y)=\psi'(x)$. So, there is a holomorphic automorphism $\tau$ 
of $B(z,\rho^2)$ such that $\tau(y)=z$ and 
$\|\tau-\id\|_{\Cc^1}=O(\rho)$ 
(cf. \cite[p. 25-28]{Rudin80}). 
Applying the sub-mean inequality to the psh function 
$\widetilde\psi:=\psi\circ \tau^{-1}$ at $z$ we have 
\begin{align}\label{eq:4.18}
\psi'(x)=\widetilde\psi(z)\leqslant 
n!\,\pi^{-n}\rho^{-4n} \int_{B(z,\rho^2)} \widetilde \psi dZ =
n! \,\pi^{-n}\rho^{-4n} \int_{B(z,\rho^2)} \psi \tau^*(dZ).
\end{align}
Observe that since $\|\tau-\id\|_{\Cc^1}=O(\rho)$, 
\begin{align}\label{eq:4.19}
\tau^*(dZ) \geq  (1-c\rho) dZ
\end{align}
for some $c>0$. The lemma follows.
\endproof

The following result gives us a situation where 
Corollary \ref{cor_abstract} applies.
It refines \cite[Theorem\,5.1]{CM11}, where it is shown that 
$\frac1p\log B_p\to 0$ in
$L^{1}(X,\omega_X^{n})$ for the Bergman kernel $B_{p}$ 
on powers $L^{p}$ of a big line bundle $L$ over 
a compact K\"ahler manifold $(X,\omega_X)$.
\begin{theorem}\label{t4.1}
Let $(X,\omega_X)$ be a compact K\"ahler manifold of 
dimension $n$. Let $L$ be a big 
holomorphic line bundle and let $h^L$, $\widetilde{h}^L$ 
be singular Hermitian metrics on $L$ 
such that $c_1(L,h^L)\geq 0$ and 
$c_1(L,\widetilde{h}^L)\geq \varepsilon\omega_X$ 
for some $\varepsilon>0$. 
Assume there is $A>  0$ 
such that $h^L\leq A \,  \widetilde{h}^L$. Then 
\begin{equation}\label{el1}
\big\|\log B_p\big\|_{L^1(X)} =O(\log p)\,,\;\;p\to\infty\,.
\end{equation}
Hence $\dfrac{1}{p}\omega_p\to c_1(L,h^L)$ as $p\to\infty$
with speed $O(\frac1p\log p)$.
\end{theorem}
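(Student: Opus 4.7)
The plan is to reduce the statement to the $L^1$-estimate $\|\log B_p\|_{L^1(X,\omega_X^n)}=O(\log p)$. Indeed, since $F$ is trivial, \eqref{e:Bergfcn2} reads $\frac{1}{p}\omega_p-c_1(L,h^L)=\frac{1}{2p}\ddc\log B_p$; for any test $(n-1,n-1)$-form $\beta$ with $\|\beta\|_{\Cc^2}\leqslant 1$, integration by parts yields $|\langle \ddc\log B_p,\beta\rangle|=|\langle\log B_p,\ddc\beta\rangle|\leqslant C\|\log B_p\|_{L^1(X)}$, so $\|\tfrac{1}{p}\omega_p-c_1(L,h^L)\|_{-2}\leqslant (C/p)\|\log B_p\|_{L^1(X)}$, and the claimed convergence speed follows.

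For the upper part of the $L^1$-bound, I would apply Jensen's inequality with the probability measure $\omega_X^n/V$, where $V:=\int_X\omega_X^n$. Since $\int_X B_p\,\omega_X^n=d_p\leqslant Cp^n$ by the Siegel Lemma \cite[Lemma\,2.2.6]{MM07}, one obtains $\int_X\log B_p\,\omega_X^n\leqslant V\log(d_p/V)=O(\log p)$.

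The main work lies in bounding $\int_X(\log B_p)^-\,\omega_X^n=O(\log p)$, which I would handle via peak sections. The key device is the interpolated metric $h^L_{1/p}:=(h^L)^{1-1/p}(\widetilde{h}^L)^{1/p}$, whose weight is $(1-1/p)\varphi+(1/p)\varphi'$ and whose curvature satisfies $c_1(L,h^L_{1/p})\geqslant(\varepsilon/p)\omega_X$. For $x_0$ in the full-measure locus where $\varphi,\varphi'$ are both locally bounded, take a cutoff $\chi$ equal to $1$ on a small ball around $x_0$ inside this locus, and solve $\dbar u=\dbar(\chi\,e_L^{\otimes p})$ via Theorem \ref{T:l2}(i) applied to $L^p$ equipped with $(h^L_{1/p})_p$ together with a regularization of the log-singular weight $n\log|z-x_0|^2$ (this forces $u(x_0)=0$). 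The curvature lower bound $\varepsilon/p$ produces an $L^2$-constant that depends polynomially on $p$. Setting $s_{x_0}:=\chi\,e_L^{\otimes p}-u$ gives a holomorphic section with $s_{x_0}(x_0)\ne0$. Crucially, because $p\cdot(1/p)=1$, the transition from the $(h^L_{1/p})_p$-norm back to the $h_p$-norm costs only a bounded factor: pointwise, $|s|^2_{h_p}/|s|^2_{(h^L_{1/p})_p}=e^{2(\varphi'-\varphi)}$ is uniformly $\leqslant A$ globally (from $h^L\leqslant A\widetilde{h}^L$), and at $x_0$ equals the positive factor $e^{2(\varphi'(x_0)-\varphi(x_0))}$, whose logarithm lies in $L^1(X)$. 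Combining yields $B_p(x_0)\geqslant c(x_0)\,p^{-N}$ with $\log c\in L^1(X)$, and integrating $\log B_p\geqslant\log c-N\log p$ over $X$ delivers $\int_X(\log B_p)^-\,\omega_X^n=O(\log p)$.

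The hard part will be the careful peak-section construction: the logarithmic weight at $x_0$ destroys continuity and positivity of the curvature there, so one needs either a regularization of the weight or to work on $X\setminus\{x_0\}$ with a complete K\"ahler metric in order to invoke Theorem \ref{T:l2}(i), while tracking all constants to ensure only polynomial $p$-dependence. The balance $\delta=1/p$ of the interpolation parameter is dictated by two competing demands: the $L^2$-constant $O(1/\delta)=O(p)$ coming from the curvature lower bound $\delta\varepsilon$ must be compensated against the norm-transition factor $e^{p\delta\log A}$, which stays bounded precisely when $p\delta=O(1)$.
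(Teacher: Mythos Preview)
Your strategy is sound and close to the paper's, with one genuine simplification and one quantitative slip in the lower bound.

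For the upper bound you use Jensen, $\int_X\log B_p\,\omega_X^n\leq V\log(d_p/V)=O(\log p)$, which together with the lower bound immediately controls $\|\log B_p\|_{L^1}$. This is cleaner than what the paper does: the paper proves the pointwise submean estimate $\log B_p(z)\leq\log(C_1r^{-2n})+2p\big(\sup_{B(z,r)}\psi-\psi(z)\big)$, takes $r=p^{-4}$, and then appeals to a separate integral lemma (Lemma~\ref{lemma_max_psh}) comparing $\int\psi$ with $\int\sup_{B(\cdot,\rho^4)}\psi$ to bound the positive part. Your route avoids that lemma entirely.

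For the lower bound both arguments build peak sections from an interpolated metric, a logarithmic weight at $z$, and H\"ormander estimates (the paper starts the local section via Ohsawa--Takegoshi, you via a cutoff; either works). The paper, however, uses $H_p=(h^L)^{\otimes(p-p_0)}\otimes(\widetilde h^L)^{\otimes p_0}$ with a \emph{fixed large} $p_0$, so that $c_1(L^p,H_p)\geq p_0\varepsilon\,\omega_X$ absorbs the bounded negativity from the log weight $\varphi_z$ and from $c_1(K_X^*)$ in Theorem~\ref{T:l2}(ii). Your choice $\delta=1/p$ corresponds to $p_0=1$; then on $L^p$ the curvature bound is only $\varepsilon\,\omega_X$, which need not dominate those fixed negative terms. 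Your ``competing demands'' bookkeeping is off by a level: the curvature entering the $\overline\partial$-estimate on $L^p$ is $p\delta\varepsilon$, not $\delta\varepsilon$, so with $\delta=p_0/p$ the $L^2$-constant is $O(1)$ (not $O(p)$) and the norm transition is $A^{p_0}=O(1)$. With this correction the paper in fact gets the $p$-independent pointwise bound $\log B_p\geq 2p_0(\varphi'-\varphi)-C=:\eta\in L^1(X)$, stronger than the $-N\log p+\log c$ you aim for. Also note you should invoke Theorem~\ref{T:l2}(ii) rather than (i), since you are solving for sections of $L^p$, not of $L^p\otimes K_X$.
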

\proof
Since we work only on $L$, we set in this proof for simplicity 
$h=h^L$, $\widetilde{h}=\widetilde{h}^L$.
Let $x\in X$ and $U_0\subset X$ 
be a coordinate neighborhood of $x$ on which there exists 
a holomorphic frame $e_{L}$ of $L$. 
Let $\psi$ be the psh weight of $h$ on $U_0$ relative to 
$e_{L}$, $|e_{L}|_{h}^2=e^{-2\psi}$. 
Likewise, let $\psi'$
be the psh weight of $\widetilde{h}$ on $U_0$ relative to 
$e_{L}$, $|e_{L}|_{\widetilde{h}}^2=e^{-2\psi'}$.
Multiplying the section $e_{L}$ with a constant allows
us to assume that $\psi\leq 0$. 
Fix $r_0>0$ so that the ball $V:=B(x,2r_0)$ of 
center $x$ and radius $2r_0$ is relatively 
compact in $U_0$ and let $U:=B(x,r_0)$.
By \cite[Theorem\,5.1]{CM11} and its proof (following \cite{D92}) 
there exists $C_1>0$ 
so that 
\begin{equation}\label{e:Bke}
\log B_p(z)\leq\log(C_1r^{-2n})
+2p\Big(\sup_{B(z,r)}\psi-\psi(z)\Big)
\end{equation}
holds for all $p\geq 1$, $0<r<r_0$ and $z\in U$ 
with $\psi(z)>-\infty$. 

\par Choose $r=1/p^4$. 
By applying Lemma \ref{lemma_max_psh} to $\psi$
we obtain from \eqref{e:Bke} that the integral on $U$ of 
the positive part of the right hand side of (\ref{e:Bke})
is smaller than $C_2\log p + C_2$ for some $C_2>0$\,.
Hence, in order to prove \eqref{el1} it remains to bound 
the negative part of $\log B_p$\,.

Multiplying $\widetilde{h}$ with a constant allows us 
to assume that $A=1$. 
So we have $h\leq \widetilde{h}$ and $\psi'\leq\psi$. 
Consider an integer $p_0$ (to be chosen momentarily). 
Write $L^p=L^{p-p_0}\otimes L^{p_0}$ and consider 
on $L^p$, $p>p_{0}$, the metric 
\begin{equation}\label{e:Hp}
H_p:=h^{\otimes (p-p_0)} \otimes \widetilde{h}^{\otimes p_0}\,,
\quad h_p:=h^{\otimes p}. 
\end{equation}  Then 
\begin{align}\label{eq:4.25}
c_1(L^p,H_p)=(p-p_0)c_1(L,h)
+p_0c_1(L,\widetilde{h})\geq  p_0\varepsilon \omega_X\,.
\end{align}
The weight of the metric $H_p$ with respect to the frame 
$e_{L}^{\otimes p}$ is 
$\Psi_p:=(p-p_0)\psi+p_0\psi'$ and we have 
$|e_{L}^{\otimes p}|_{H_p}^2=e^{-2\Psi_{p}}$.

\par Following \cite[Section 9]{D93b}, we proceed as in 
\cite[Theorem\,5.1]{CM11} 
to show that there exist $C_1>0$ and $p_0\in\mathbb{N}$ 
such that for all $p>p_0$ and all $z\in U$ with $\Psi_p(z)>-\infty$ 
there is a section 
$s_{z,p}\in H^0_{(2)}(X,L^p)$ with $s_{z,p}(z)\neq0$ and 
\begin{equation}\label{e:Bke1}
\int_X|s_{z,p}|^2_{H_p}\,\omega_X^n
\leq C_1|s_{z,p}(z)|^2_{H_p}\,.
\end{equation}

\par Let us prove the existence of $s_{z,p}$ as above. 
By the Ohsawa-Takegoshi 
extension theorem \cite{OT87} there exists $C'>0$ 
(depending only on $x$) such that 
for any $z\in U$ and any $p\in\N$ one can find a holomorphic 
function $v_{z,p}$ on $V$ with $v_{z,p}(z)\neq0$ and 
\begin{align}\label{eq:4.27}
\int_V|v_{z,p}|^2e^{-2\Psi_p}\omega_X^n
\leq C'|v_{z,p}(z)|^2e^{-2\Psi_p(z)}\,.
\end{align}
The function $v_{z,p}$ can be identified to a local section of 
$L^p$ satisfying an estimate similar to (\ref{e:Bke1}). 

\par We shall now solve the $\overline\partial$-equation with 
$L^2$-estimates in order to modify $v_{z,p}$ and get 
a global section $s_{z,p}$ of $L^p$ over $X$. 
Let $\theta\in\mathscr{C}^\infty(\mathbb R)$ be 
a cut-off function such that 
$0\leq\theta\leq1$, $\theta(t)=1$ for $|t|\leq\frac12$, 
$\theta(t)=0$ for $|t|\geq 1$.
Define the quasi-psh function $\varphi_z$ on $X$ by
\begin{align}\label{eq:4.29}
\varphi_z(y)=\begin{cases}n\theta\big(\tfrac{|y-z|}{r_0}\big)
\log\frac{|y-z|}{r_0}\,,\quad
\text{for $y\in U_0$}\,,\\
0,\quad\text{for $y\in X\setminus B(z,r_0)$}\,.
\end{cases}
\end{align}
We apply Theorem \ref{T:l2} (ii) for $(X,\omega_X)$ and 
$(L^p,H_p\,e^{-\varphi_z})$.
Note that there exists $C_3>0$ such that 
$dd^c\varphi_z\geq-C_3\omega_X$ for all $z\in U$.
We have
\begin{align}\label{eq:4.30}
c_1(L^p,H_p\,e^{-\varphi_z})=(p-p_0)\,c_1(L,h^L)
+p_0\,c_1(L,\widetilde{h}^L)+
dd^c\varphi_z\geq  (p_0\varepsilon-C_3) \omega_X\,.
\end{align}
Since $p_0$ is large enough, we have 
$(p_0\varepsilon-C_3)\omega_X+c_1(K_X^*,h^{K_X^*})
\geq  C_3\, \omega_X$.
Thus, 
\begin{align}\label{eq:4.31}
c_1(L^p,H_p\,e^{-\varphi_z})+c_1(K^*_X,h^{K^*_X})
\geq  C_3\, \omega_X\,,\quad  \text{ for any } p\geq  p_0\,.
\end{align}

Consider the form 
\begin{align}\label{eq:4.32}
g\in L^2_{0,1}(X,L^p),\;
g=\overline\partial\big(v_{z,p}\,
\theta\big(\tfrac{|y-z|}{r_0}\big)e_{L}^{\otimes p}\big),
\end{align}
which vanishes outside $V$ and also on $B(z,r_0/2)$. 
By (\ref{eq:4.27}), (\ref{eq:4.32}) and $\Psi_p(z)>-\infty$,
we get 
\begin{align}\label{eq:4.34}\begin{split}
\int_X|g|^2_{H_p}\,e^{-2\varphi_z}\omega_X^n
&=\int_{V\setminus B(z,r_0/2)}|v_{z,p}|^2|
\overline\partial\theta(\tfrac{|y-z|}{r_0})|^{2}e^{-2\Psi_p}
e^{-2\varphi_z}\omega_X^n\\
&\leqslant C''\int_{V}|v_{z,p}|^2e^{-2\Psi_p}\omega_X^n
\leqslant C'' C' \, |v_{z,p}(z)|^2e^{-2\Psi_p(z)}<\infty,
\end{split}\end{align}
where $C''>0$ is a constant that depends only on $x$. 
By Theorem \ref{T:l2} (ii), (\ref{eq:4.31}) and (\ref{eq:4.34}), 
for each $p\geq p_0$ there exists $u\in L^2_{0,0}(X,L^p)$ 
such that $\overline\partial u =g$ and
\begin{align}\label{eq:4.33}
\int_X|u|^2_{H_p}\,e^{-2\varphi_z}\,\omega_X^n
\leqslant\frac{1}{C_3}\int_X|g|^2_{H_p}\,
e^{-2\varphi_z}\omega_X^n \,.
\end{align}

Since $g$ is smooth, $u$ is also smooth. 
Near $z$, $e^{-2\varphi_z(y)}=r_0^{2n}|y-z|^{-2n}$ is not
integrable, thus $u(z)=0$. Define 
\begin{align}\label{eq:4.35}
s_{z,p}:=v_{z,p}\,\theta\big(\tfrac{|y-z|}{r_0}\big)
e^{\otimes p}_{L}-u.
\end{align}
Then 
\begin{align}\label{eq:4.36}
\overline\partial s_{z,p}=0, \quad
s_{z,p}(z)=v_{z,p}(z)e^{\otimes p}_{L}(z)\neq0, \quad
s_{z,p}\in H^0_{(2)}(X,L^p).
\end{align}
Since $\varphi_z\leqslant0$ on $X$, by (\ref{eq:4.27}),
(\ref{eq:4.34}), (\ref{eq:4.33}) and (\ref{eq:4.35}), we get
\begin{eqnarray*}
\int_X|s_{z,p}|^2_{H_p}\,\omega_X^n&\leqslant&
2\left(\int_V|v_{z,p}|^2e^{-2\Psi_p}\omega_X^n
+\int_X|u|^2_{H_p}e^{-2\varphi_z}\,\omega_X^n\right)\\
&\leqslant&2C'\left(1+\frac{C''}{C_3}\right)|v_{z,p}(z)|^2
e^{-2\Psi_p(z)}=C_1|s_{z,p}(z)|^2_{H_p}\,,
\end{eqnarray*}
with a constant $C_1>0$ that depends only on $x$. 
This concludes the proof of \eqref{e:Bke1}. 

By dividing both sides of \eqref{e:Bke1} by a constant, 
we obtain the existence of sections
$s_{z,p}\in H^0(X,L^p)$, $p>p_0$, such that 
\begin{equation}\label{e:Bke2}
\int_X|s_{z,p}|^2_{H_p}\,\omega_X^n=1\,,
\quad |s_{z,p}(z)|^2_{H_p}\geq  \frac{1}{C_{1}}.
\end{equation}
Since $\widetilde{h}\geq  h$, the first property of \eqref{e:Bke2} 
and \eqref{e:Hp} imply
\begin{equation}\label{e:Bke3}
\int_X|s_{z,p}|^2_{h_p}\,\omega_X^n\leqslant 1\,.
\end{equation}
Then \eqref{e:1}, \eqref{e:2}, \eqref{e:Hp} and
the second property of \eqref{e:Bke2} yield
\begin{equation}\label{e:Bke4}
|s_{z,p}(z)|_{h_p}^2 \geq C_{1}^{-1} e^{2p_0(\psi'(z)-\psi(z))}
=C_{1}^{-1}  e^{2p_0(\varphi'(z)-\varphi(z))}.
\end{equation}
Recall now (see e.\,g., \cite[Lemma\,3.1]{CM11}) that
\begin{equation}\label{e:Bke5}
\begin{split}
B_p(x)&=\max\{|s(x)|^2_{h_p}:\,s\in H^0_{(2)}(X,L^p),\;
\|s\|_p=1\}\\
&=\max\{|s(x)|^2_{h_p}:\,s\in H^0_{(2)}(X,L^p),\;\|s\|_p\leq1\}\,.
\end{split}
\end{equation}
It follows from \eqref{e:Bke3}-\eqref{e:Bke5} that there exists 
$C_5>0$ such that
\begin{align}\label{eq:4.38}
\log B_p(z)\geq \log |s_{z,p}(z)|_{h_p}^2
\geq 2 p_0\big(\varphi'(z)-\varphi(z)\big)-C_5
=:\eta(z)\,,
\end{align}
where $\eta\in L^{1}(X,\omega_X^{n})$, $\eta\leq0$. 
Hence $\log B_p\geq\eta$ a.\,\!e.\ on $X$.  
The result follows.
\endproof

\begin{corollary}\label{cor_cv_holder2}
Let $(X,\omega_X)$ be a compact K\"ahler manifold of 
dimension $n$. Let $L$ be a big 
holomorphic line bundle and let $h^L$, $\widetilde{h}^L$ 
be as in Theorem \ref{t4.1}.
Let $U$ be an open subset of $X$. 

\noindent
(i) Assume that the global weight $\varphi'$ 
of $\widetilde{h}^L$ given by \eqref{e:2} is bounded on 
a neighborhood of $\overline U$. Then
\begin{equation}\label{e:weiconv}
\|\varphi_p-\varphi\|_{L^1(U)}=O\left(\frac1p\log p\right)\,,
\:\:p\to\infty,
\end{equation}
and for every $1\leq k\leq n$ we have
\begin{equation}\label{e:fsk}
\frac{1}{p^k}\omega_p^k\to c_1(L,h^L)^k\,,\:\:p\to\infty,
\:\:\text{on $U$.}
\end{equation} 
 (ii) Assume moreover, $\varphi$ is H\"older continuous on 
 a neighborhood of $\overline U$. Then
\begin{equation}\label{e5.12}
\|\varphi_p-\varphi\|_{U,\infty}=O\left(\frac1p{\log p}\right)\,,
\:\:p\to\infty\,,
\end{equation}
 and \eqref{e:fsk} holds with speed  
 $O\big(\frac1p\log p\big)$. 
 
Hence
for $\sigma_{\infty}$-almost every sequence 
$(S_p)\in(\Omega_k(L),$ $\sigma_\infty)$,
$S_p=([s_p^{(1)}],\ldots,[s_p^{(k)}])$,
\begin{equation}\label{e5.13}
\frac{1}{p^k}\big[s_p^{(1)}=\ldots=s_p^{(k)}=0\big]
\to c_1(L,h^L)^k\,,\:\:p\to\infty,
\:\:\text{on $U$ with speed $O\left(\frac1p\log p\right)$.}
\end{equation}
\end{corollary}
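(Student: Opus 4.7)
The plan is to reduce Corollary \ref{cor_cv_holder2} to Theorem \ref{t4.1} together with the two pointwise Bergman kernel bounds that essentially appear in its proof --- the submean upper estimate \eqref{e:Bke} and the Ohsawa--Takegoshi lower estimate \eqref{eq:4.38} --- a Bedford--Taylor wedge-product continuity step, and finally Corollary \ref{cor_abstract_2} to pass to random common zeros.

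For part (i), the identity $\varphi_p-\varphi=\tfrac{1}{2p}\log B_p$ together with Theorem \ref{t4.1} immediately yields \eqref{e:weiconv}. To establish \eqref{e:fsk}, I would first show that the family $\{\varphi_p\}$ is \emph{uniformly} bounded on some open neighbourhood $V$ of $\overline U$. The upper bound comes from the pointwise estimate \eqref{e:Bke} used with a \emph{fixed} radius $r<r_0$: this gives $\varphi_p(z)\leq \sup_{B(z,r)}\psi + O(1/p)$, uniformly in $p$, and the right-hand side is locally bounded because $\psi$ is psh. The lower bound is already contained in \eqref{eq:4.38}: $\log B_p\geq 2p_0(\varphi'-\varphi)-C_5$; since the hypothesis of (i) says $\varphi'$ is bounded on a neighbourhood of $\overline U$ and $\varphi$ is quasi-psh, hence locally bounded above, $\varphi-\varphi'$ is bounded on $V$, so $\log B_p\geq -C$ there. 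Combining the uniform boundedness of $\{\varphi_p\}$ on $V$ with the $L^1(V)$-convergence $\varphi_p\to\varphi$ and the identity $\tfrac{1}{p}\omega_p=dd^c\varphi_p+\alpha$, the Bedford--Taylor continuity of wedge products for uniformly bounded quasi-psh functions converging in $L^1_{\loc}$ delivers $\tfrac{1}{p^k}\omega_p^k\to (dd^c\varphi+\alpha)^k=c_1(L,h^L)^k$ weakly on $U$.

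For part (ii), H\"older continuity of $\varphi$, and hence of any local weight $\psi$, on a neighbourhood of $\overline U$ yields $\sup_{B(z,r)}\psi-\psi(z)\leq C r^{\beta}$ for some H\"older exponent $\beta\in(0,1]$. Inserting this into \eqref{e:Bke} and optimising in the auxiliary radius (roughly $r\sim p^{-1/\beta}$) produces $\log B_p\leq C'\log p$ uniformly on $V$, while the lower bound above still gives $\log B_p\geq -C$. This yields $\|\varphi_p-\varphi\|_{U,\infty}=O(\tfrac{\log p}{p})$, which is \eqref{e5.12}. To upgrade the convergence of $\tfrac{1}{p^k}\omega_p^k$ to the same speed in $\|\cdot\|_{U,-2}$, I would use the telescoping identity
\begin{equation*}
\tfrac{1}{p^k}\omega_p^k-\omega^k=\sum_{j=0}^{k-1}\big(\tfrac{1}{p}\omega_p\big)^j\wedge\omega^{k-1-j}\wedge dd^c(\varphi_p-\varphi),
\end{equation*}
pair it with a test form $u$ supported in $U$ with $\|u\|_{\Cc^2}\leq 1$, and integrate by parts --- legitimate because $\varphi_p$ and $\varphi$ are uniformly bounded on a neighbourhood of $\supp u$, so all Bedford--Taylor products involved make sense and are closed positive currents. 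Each resulting term is bounded by $\|\varphi_p-\varphi\|_{V,\infty}\cdot\|dd^c u\|_\infty\cdot\big\|(\tfrac{1}{p}\omega_p)^j\wedge\omega^{k-1-j}\big\|$, and the last mass is $O(1)$ because it depends only on the cohomology class $c_1(L)^{k-1}$.

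Finally, \eqref{e5.13} follows from Corollary \ref{cor_abstract_2} applied with $\lambda_p:=C\log p$ for any constant $C>(m+1)c$: the corollary then gives, for $\sigma_\infty$-almost every sequence $(S_p)\in\Omega_k(L)$, that $\tfrac{1}{p^k}[s_p^{(1)}=\ldots=s_p^{(k)}=0]$ converges to $\tfrac{1}{p^k}\Phi_p^*(\omega_\FS^k)=\tfrac{1}{p^k}\omega_p^k$ in $\|\cdot\|_{U,-2}$ at speed $\lambda_p/p=O(\tfrac{\log p}{p})$; combined with the previous paragraph this gives the required convergence to $c_1(L,h^L)^k$ at speed $O(\tfrac{\log p}{p})$. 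The main technical obstacle is the uniform boundedness of $\{\varphi_p\}$ on a neighbourhood of $\overline U$ --- once that is in hand, the rest is a standard combination of telescoping, Bedford--Taylor continuity, and a Borel--Cantelli argument implicit in Corollary \ref{cor_abstract_2}.
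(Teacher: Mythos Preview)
Your proof is correct and follows essentially the same approach as the paper: the $L^1$ estimate from Theorem~\ref{t4.1}, the pointwise bounds \eqref{e:Bke} and \eqref{eq:4.38} to control $\varphi_p$ uniformly on a neighbourhood of $\overline U$, Bedford--Taylor continuity for \eqref{e:fsk}, the H\"older estimate inserted into \eqref{e:Bke} with $r\sim p^{-\ell}$ for \eqref{e5.12}, and Corollary~\ref{cor_abstract_2} for \eqref{e5.13}. You are in fact more explicit than the paper, which leaves both the uniform upper bound on $\varphi_p$ and the telescoping/integration-by-parts argument for the speed of $\tfrac{1}{p^k}\omega_p^k\to\omega^k$ entirely implicit.
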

\begin{proof}
Since $\varphi'$ is bounded on a neighborhood 
of $\overline U$, $\varphi$ is also bounded in that neighborhood. 
We see in the above proof that \eqref{e:weiconv} holds 
and $\varphi_p+c/p\geq \varphi$ for some $c>0$. 
On the set where $\varphi$ and $\varphi_{p}$ are locally bounded 
the wedge-products
$\omega^k$ and $\omega_p^k$ are well-defined for any 
$1\leq k\leq n$ by \eqref{e:1.1}, \eqref{e:1.2} 
and \cite{BT82}. Thus \eqref{e:fsk} holds.

Assume moreover that $\varphi$ is H\"older continuous on 
a neighborhood of $\overline U$. 
Observe that the function $\eta$  in (\ref{eq:4.38}) is bounded on 
$U$, thus, taking $r=1/p^\ell$ with $\ell$ large enough 
in \eqref{e:Bke},  yields \eqref {e5.12}. 
Finally, \eqref{e5.13} follows from Corollary \ref{cor_abstract_2}.
\end{proof}
Note that under the assumptions of 
Corollary \ref{cor_cv_holder2} (i) we do not obtain 
an estimate of the convergence speed in \eqref{e:fsk}. 
To get this, the
assumption of H\"older continuity in item (ii) is necessary.

We can state a result similar to Theorem \ref{t4.1} 
in the case of adjoint line bundles $L^p\otimes K_X$. 
We do not suppose that the base manifold is compact, 
so the space of $L^2$ holomorphic
sections could be infinite dimensional. However, 
the definitions \eqref{e:Bergfcn}
and \eqref{fs1} of the Bergman kernel function 
and Fubini-Study currents carry over without change. 
Theorem \ref{t4.2} refines \cite[Theorem\,3.1]{CM13b}, 
where it is shown that $\frac1p\log B_p\to 0$ in 
$L^{1}(U,\omega_X^{n})$.
\begin{theorem}\label{t4.2}
Let $(X,\omega_{X})$ be a K\"ahler manifold of dimension $n$ 
which admits a (possibly different) complete K\"ahler metric.
Let $L$ be a holomorphic line bundle
and let $h^L$ be a singular Hermitian metric on $L$ such that 
$c_1(L,h^L)\geq 0$. 
Let $U\subset X$ be a relatively compact open set such that 
$c_1(L,h^L)\geq \varepsilon\omega_X$ on a neighborhood 
of $\overline U$ for some $\varepsilon>0$. 
Let $B_p$ and $\omega_p$ be the Bergman kernel function and 
Fubini-Study current associated 
with $H^0_{(2)}(X,L^p\otimes K_X)$. Then 
\begin{equation}\label{el2}
\big\|\log B_p\big\|_{L^1(U)} =O(\log p)\,,\;\;p\to\infty\,.
\end{equation}
Hence $\dfrac{1}{p}\omega_p\to c_1(L,h^L)$ on $U$ 
as $p\to\infty$ with speed $O\big(\frac1p\log p\big)$.    
\end{theorem}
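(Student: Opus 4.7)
The strategy parallels that of Theorem \ref{t4.1}: bound $\log B_p$ above and below on $U$, each with $L^1(U,\omega_X^n)$-norm $O(\log p)$, and then conclude via the adjoint-bundle analog of \eqref{e:Bergfcn2} (a smooth correction from $c_1(K_X,h^{K_X})$ appears but does not affect the convergence speed). The new features compared with Theorem \ref{t4.1} are that $X$ is not assumed compact and that strict positivity of $c_1(L,h^L)$ holds only on a neighborhood of $\overline U$. The first is handled by the assumed complete K\"ahler metric on $X$, which is the precise input needed by Theorem \ref{T:l2}(i); the second is compatible with the lower-bound construction because the relevant $\overline\partial$-data can be arranged to be supported inside the positivity neighborhood.

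Fix a finite cover of $\overline U$ by coordinate charts $U_0$ trivializing $L$ with frame $e_L$, write $|e_L|^2_{h^L}=e^{-2\psi}$ with $\psi$ psh on $U_0$, and express local sections of $L^p\otimes K_X$ as $f\cdot e_L^{\otimes p}\otimes(dz_1\wedge\cdots\wedge dz_n)$. Since the metric induced on $K_X$ by $\omega_X$ is smooth, $\log B_p=\log\bigl(\sum_j|f_j^p|^2\bigr)-2p\psi+O(1)$ on $U_0$. The submean-value argument of \cite[Thm.\,5.1]{CM11} (cf.\ \cite{D92}) then yields, for $z\in U$ and $0<r<r_0$,
\begin{equation*}
\log B_p(z)\leq \log(C_1 r^{-2n})+2p\bigl(\sup_{B(z,r)}\psi-\psi(z)\bigr)+O(1).
\end{equation*}
Choosing $r=1/p^4$ and invoking Lemma \ref{lemma_max_psh} gives $\int_U(\log B_p)_+\,\omega_X^n=O(\log p)$.

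For the lower bound, we produce for each $z\in U\setminus\Sigma$ a section $s_{z,p}\in H^0_{(2)}(X,L^p\otimes K_X)$ with $\|s_{z,p}\|_p\leq 1$ and $|s_{z,p}(z)|^2_{h_p}\geq c_0>0$ uniformly in $z$ and $p$. Ohsawa-Takegoshi on a ball $V\subset U_0$ around $z$, chosen inside the strict-positivity neighborhood, yields a local holomorphic function $v_{z,p}$ with $\int_V|v_{z,p}|^2 e^{-2p\psi}\omega_X^n\leq C'|v_{z,p}(z)|^2 e^{-2p\psi(z)}$. Cutting off by $\theta(|y-z|/r_0)$ produces a smooth $L^p\otimes K_X$-valued form whose $\overline\partial$, denoted $g$, is supported in an annulus around $z$ still inside the positivity region. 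Apply Theorem \ref{T:l2}(i) to $L^p$ with the modified metric $h^L_p\cdot e^{-2\varphi_z}$, where $\varphi_z$ is as in \eqref{eq:4.29}: select a continuous $\lambda\geq 0$ equal to $p\varepsilon-C_3$ on $\supp g$ and vanishing outside the positivity region, so that $c_1(L^p,h^L_p e^{-2\varphi_z})\geq \lambda\omega_X$ globally and $\int_X\lambda^{-1}|g|^2 e^{-2\varphi_z}\omega_X^n<\infty$. The resulting solution $u$ of $\overline\partial u=g$ satisfies $u(z)=0$ (since $e^{-2\varphi_z}$ is non-integrable at $z$) and has $\|u\|_p$ controlled; after normalization, $s_{z,p}:=v_{z,p}\theta(|y-z|/r_0)\cdot e_L^{\otimes p}\otimes(dz_1\wedge\cdots\wedge dz_n)-u$ is the required global section, yielding $\log B_p(z)\geq -C$ on $U\setminus\Sigma$ and hence $\int_U(\log B_p)_-\,\omega_X^n=O(1)$.

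The main obstacle is that, unlike in Theorem \ref{t4.1}, there is no auxiliary globally strictly positive metric on $L$ available to twist against. What rescues the argument is the adjoint-bundle formulation of the Andreotti-Vesentini-H\"ormander estimate, Theorem \ref{T:l2}(i), which demands only a continuous $\lambda\geq 0$ with $c_1(L,h^L)\geq \lambda\omega_X$ and the datum $g$ weighted by $\lambda^{-1}$ in $L^2$. Because $g$ can be localized inside the region where $\lambda$ is bounded below by a positive multiple of $p$, the integrability is automatic, and completeness of some K\"ahler metric on $X$ substitutes for compactness. The passage from $\|\log B_p\|_{L^1(U)}=O(\log p)$ to $\|\tfrac1p\omega_p-c_1(L,h^L)\|_{U,-2}=O(\tfrac1p\log p)$ is then immediate from \eqref{e:Bergfcn2} (adjusted for $K_X$) and the duality defining $\|\cdot\|_{U,-2}$.
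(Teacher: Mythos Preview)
Your proposal is correct and follows essentially the same route as the paper: the upper bound via the submean inequality with $r=1/p^4$ and Lemma~\ref{lemma_max_psh}, and the uniform lower bound $\log B_p\geq -C$ via Ohsawa--Takegoshi plus Theorem~\ref{T:l2}(i) applied to $(L^p,(h^L)^{\otimes p}e^{-2\varphi_z})$ with a continuous $\lambda$ supported in the strict-positivity neighborhood, exactly as you describe. The paper refers to \cite[Theorem~4.2]{CM13} for the details of the lower-bound construction, whereas you have spelled them out; otherwise the arguments coincide.
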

\begin{proof}
The proof is similar to the proof of Theorem \ref{t4.1}, 
with some simplifications due to the fact
that we don't need an auxiliary metric $\widetilde{h}^L$. 
The K\"ahler metric $\omega_X$ 
induces a metric on the canonical line bundle $K_X$ 
that we denote by $h^{K_X}$. 
We denote by $h_p$ the metric induced by
$h^L$ and $h^{K_{X}}$ on $L^p\otimes K_{X}$.
Let $U'$ be a neighborhood of $\overline U$ 
on which  the hypothesis $c_1(L,h^L)\geq\varepsilon\omega_X$ 
holds. We let $x\in U$ and $U_{0}\subset U'$ be a coordinate 
neighborhood of $x$ on which there exists a
holomorphic frame $e_{L}$ of $L$ and $e'$ of $K_{X}$. Let $\psi$
be a psh weight of $h^L$.
Fix $r_0>0$ so that the ball $V:=B(x,2r_0)\Subset U_{0}$ 
and let $W:=B(x,r_0)$.

\par Following the arguments of \cite[Theorem 5.1]{CM11} 
(or, more precisely, \cite[Theorem 4.2]{CM13},
where forms with values in $L^{p}\otimes K_{X}$ are considered)
we show that there exist
$C=C(W)>0$ and $p_0=p_{0}(W)\in\mathbb{N}$ 
so that
\begin{equation}\label{e:Bke7}
-\log C\leq\,\log B_p(z)\leq\log(Cr^{-2n})+
2p\Big(\max_{B(z,r)}\psi-\psi(z)\Big)
\end{equation}
holds for all $p> p_{0}$, $0<r<r_0$ and $z\in W$ 
with $\psi(z)>-\infty$.

The right-hand side estimate follows as in \cite[Theorem 5.1]{CM11}; 
it holds for all $p$ and does not require the hypothesis that $X$
is compact.

We prove next the lower estimate from \eqref{e:Bke7}. 
We proceed like in the proof of \cite[Theorem 4.2]{CM13} 
to show that there exist $C_2=C_2(W)>0$, 
$p_0=p_{0}(W)\in\mathbb{N}$
such that for all $p>p_0$ and all $z\in W$ with $\psi(z)>-\infty$
there exists $s_{z,p}\in H_{(2)}^0(X,L^p\otimes K_{X})$ 
with $s_{z,p}(z)\neq0$ and
\begin{equation}\label{e:sp}
\|s_{z,p}\|_p^2\leq C_2|s_{z,p}(z)|^2_{h_p}\,,
\end{equation}
where $\|s\|_p$ is the $L^2$ norm defined in \eqref{herm_prod}.
This is done exactly as in \cite[Theorem 4.2]{CM13}; 
the main point is again
the Ohsawa-Takegoshi extension theorem and the solution of the
$\overline\partial$-equation by the $L^{2}$ method from 
Theorem \ref{T:l2} (i).
Observe that \eqref{e:Bke5} and \eqref{e:sp} 
yield the desired lower estimate
\begin{equation}\label{e:Bke8}
\log B_p(z)=\,\max_{\|s\|_p=1}\log|s(z)|^2_{h_p}
\geq-\log C_2\,,\;\; \text{Êfor } p> p_{0},\,z\in W
\text{ and } \psi(z)>-\infty.
\end{equation}
Since $U$ is relatively compact we can choose $C_{2}$ 
and $p_{0}$ such that $\log B_p\geq-\log C_2$
holds a.\,\!e.\ on $U$ for all $p> p_{0}$.
As in the proof of Theorem \ref{t4.1}, we use the estimate 
from above in \eqref{e:Bke7}
and Lemma \ref{lemma_max_psh} to show the existence of 
$C_1=C_1(U')>0$ such that for all $p\in\N^*$,
\begin{equation*}
\int_{U}(\log B_p)\,\omega_{X}^{n}\leq C_1\log p + C_1.
\end{equation*}
This completes the proof of Theorem \ref{t4.2}.
\end{proof}

\begin{proof}[Proof of Theorem \ref{th_eq_speed}]
Combining Theorem \ref{t4.1} and Corollary \ref{cor_abstract1} 
applied to the case where $(F,h^F)$ is the trivial  line bundle 
and $\lambda_p=(2n+2)c\log p$, we obtain item (i).
Theorem \ref{t4.2} and Corollary \ref{cor_abstract1}
for $(F,h^F)=(K_X,h^{K_X})$
and the same $\lambda_p$ as above yield item (ii).
\end{proof}
\section{Approximation of H\"older continuous 
weights}\label{s:hoelder}
In this section we prove Theorem \ref{th_cv_holder} 
and Corollary \ref{cor_cv_holder}.
\proof[Proof of Theorem \ref{th_cv_holder}]
The continuity can be deduced directly from 
the estimate \eqref{e5.1}. 

We prove now \eqref{e5.1}.   
Recall that  $h_p = (h^L)^{\otimes p}$ is
 the metric on $L^p$ (cf. \eqref{e:hp}).  
Write as above 
$L^p=L^{p-p_0}\otimes L^{p_0}$ with the metric 
$H_{e,p}:=(h^L_{\eq})^{\otimes (p-p_0)} 
\otimes (h^L_0)^{\otimes p_0}$. 
As in Section \ref{section_hyp} (see \eqref{e:Bke2}), 
given a point $x_0\in X$, there exists a neighborhood $U(x_0)$ 
and $C>0$ such that for any $z\in U(x_0)$, one can find
a holomorphic section $s_{z,p}\in H^0(X,L^p)$ satisfying
\begin{equation}\label{e:Bke2b}
\int_X |s_{z,p}|^2_{H_{e,p}} \omega_X^n\leq C\,,\quad
|s_{z,p}(z)|_{H_{e,p}}=1.
\end{equation}
Since $\varphi_{\eq}$ and $\varphi$ are bounded and 
$\varphi_{\eq}\leq \varphi$, we deduce from \eqref{e:Bke2b} that
there exists $C>0$ such that
\begin{equation}\label{e:Bke2c}
\int_X |s_{z,p}|^2_{h_p}\omega_X^n\leq C\,,\quad
|s_{z,p}(z)|_{h_p}\geq C^{-1} e^{p(\varphi_{\eq}-\varphi)}.
\end{equation}
It follows from \eqref{e:Bke5} and \eqref{e:Bke2c} that there 
exists $c>0$ such that  we have
\begin{align}\label{eq:5.2}
{\frac1{2p}}\log B_p\geq  \varphi_{\eq}-\varphi- \frac{c}{p}
\:\:\:\text{on $X$}.
\end{align}
Since $\varphi_p=\varphi+{\frac1{2p}}\log B_p$, we obtain that
\begin{align}\label{eq:5.3}
\varphi_p-\varphi_{\eq}\geq  -\frac{c}{p}\:\:\:\text{on $X$}.
\end{align}

The estimate from above for $\varphi_p-\varphi_{\eq}$ is obtained 
using the submean inequality. 
Since $\varphi_p$ is $\alpha$-psh, 
by \eqref{e_phie}, 
it is enough to show that 
$\varphi_p \leq  \varphi+ \frac{c\log p}{p}$ on $X$ 
which is equivalent to $B_p\leq  p^{2c}$ for some $c>0$. 
Fix a point $a$ in $X$. Consider an arbitrary holomorphic section
$s\in H^0(X,L^p)$ such that 
\begin{align}\label{eq:5.4}
\int_X |s|_{h_p}^2\omega_X^n=1.
\end{align}
By \eqref{e:Bke5}, we only have to check that 
\begin{align}\label{eq:5.5}
|s(a)|_{h_p}^2\leq  p^{2c}.
\end{align}

Fix local holomorphic coordinates $z$ around $a$ with $|z|\leq  1$ 
and a holomorphic frame of $L$ such that $s$ is 
represented by a holomorphic function 
$f$ and the metric $h^L$ is represented by $e^{-\psi}$
with $\psi$ is H\"older continuous and $\psi(0)=0$. 
So, we have for some $C,\alpha>0$
\begin{align}\label{eq:5.6}
|\psi(z)|\leq  C|z|^\alpha.
\end{align}
Since $s$ has unit $L^2$-norm, the integral
$$\int_{|z|\leq  p^{-1/\alpha}} |f(z)|^2 e^{-2Cp|z|^\alpha} dZ$$
is bounded by a constant independent of $p$.
It follows that the integral of $|f|^2$ on the ball 
$B(0,p^{-1/\alpha})$ is bounded, because the function 
$e^{2Cp|z|^\alpha}$ is bounded there.
Therefore, by the submean inequality, we get
\begin{align}\label{eq:5.8}
|s(a)|_{h_p}^2  =|f(0)|^2\leq C' p^{2n/\alpha}.
\end{align}
This completes the proof.
\endproof

\begin{proof}[Proof of Corollary \ref{cor_cv_holder}] 
    Theorem \ref{th_cv_holder} together with 
Corollary \ref{cor_abstract_2} applied to
\[\lambda_p=(m+2)c\log p\] imply immediately the result.
\end{proof}

\begin{example}
Let us discuss here the important example of  the line bundle 
$L=\mathcal{O}(1)$ over $X={\mathbb P}^n$. 
The global holomorphic sections of $L^p=:\mathcal{O}(p)$ 
are given by homogeneous polynomials 
of degree $p$ on ${\mathbb C}^{n+1}$:
\begin{equation}
H^0({\mathbb P}^n,\mathcal{O}(p))
\cong\big\{f\in\C[w_0,\ldots,w_n]: 
\text{$f$ homogeneous, $\deg f=p$}\big\}=:R_p.
\end{equation}
There exists a smooth metric $h_{\FS}=h^{\cO(1)}_{\FS}$ 
on $\cO(1)$ such that the Fubini-Study K\"ahler 
form on ${\mathbb P}^n$ is defined as the first Chern form
associated to $(\cO(1), h_{\FS})$, 
\begin{equation}
\omega_{\FS}=\frac{i}{2\pi}R^{\cO(1)}.
\end{equation}
Let $\met^+(\mathcal{O}(1))$ be the set of all
semipositively curved singular metrics on $\mathcal{O}(1)$. 
By \eqref{psh-met} we know that there exists a bijection
\begin{equation}\label{psh-met1}
PSH({\mathbb P}^n,\omega_{\FS})
\longrightarrow\met^+(\mathcal{O}(1))\,,\:\: 
\varphi\longmapsto h_{\varphi}=h_{\FS} e^{-2\varphi},
\end{equation}
and $c_1(\mathcal{O}(1),h_{\varphi})=\omega_{\FS}+dd^c\varphi$.
Moreover, $PSH({\mathbb P}^n,\omega_{\FS})$ is in 
one-to-one correspondence to 
the Lelong class $\mathcal L({\mathbb C}^n)$ 
of entire psh functions with logarithmic growth, 
\[\mathcal L({\mathbb C}^n)=\Big\{ \psi\in PSH(\C^n):
\text{ there is } C_{\psi} \in \R \,\text{Êsuch that } 
\:\psi(z)
\leq\tfrac12\log(1+|z|^2)+C_\psi\,  \text{Êfor } z\in\C^n\Big\},\]
and the map $\mathcal L({\mathbb C}^n)
\to PSH({\mathbb P}^n,\omega_{\FS})$
is given by $\psi\mapsto\varphi$ where 
\[
\varphi=\begin{cases}
\psi(w)-\frac12\log(1+|w|^2)\,,\quad &w\in\C^n,\\
\limsup\limits_{z\to w,z\in\C^n}\varphi(z)\,,\quad
&w\in\P^n\setminus\C^n.
\end{cases}
\]
Here we use the usual embeding of $\C^n$ in $\P^n$. 
Let $h\in\met^+(\mathcal{O}(1))$ and let 
$\varphi\in PSH({\mathbb P}^n,\omega_{\FS})$ such that
$h=h_{\FS} e^{-2\varphi}$. Then
\begin{equation}
H^0_{(2)}({\mathbb P}^n,\mathcal{O}(p))
=\Big\{f\in H^0({\mathbb P}^n,\mathcal{O}(p)): 
\int_{\P^n}|f|^2_{h^p_{\FS}}e^{-2p\varphi}\omega^n_{\FS}
<\infty\Big\}=:R_p(\varphi).
\end{equation}
We denote as usual by $\omega_p$ the Fubini-Study current 
associated with
$H^0_{(2)}({\mathbb P}^n,\mathcal{O}(p))$ by \eqref{fs} and 
let $\varphi_p$ be the Fubini-Study global weights \eqref{e:21}.
Note that if $\varphi$ is bounded, 
$R_p(\varphi)=R_p$ (as sets but in general not as Hilbert spaces).

We have the following immediate consequence of
Theorem \ref{th_eq_speed} and Corollary \ref{cor_cv_holder2}.
\begin{corollary}\label{cor_cv_holder3}
(i) Let $\varphi\in PSH({\mathbb P}^n,\omega_{\FS})$.  
Assume there exists $\widetilde{\varphi}
\in PSH({\mathbb P}^n,\omega_{\FS})$
such that 
\[\varphi\geq\widetilde{\varphi}\:\:\text{and}\:\:
(1-\varepsilon)\omega_{\FS}+\ddc\widetilde{\varphi}\geq0\,,\:\: 
\text{for some $\varepsilon>0$}.
\]
Then for $\sigma_\infty$-almost every sequence 
$[s_p]\in \P\big(R_p(\varphi)\big)$ of homogeneous polynomials, 
${(\frac1p}[\Div(s_p)])$ 
converges to $\omega_{\FS}+\ddc\varphi$ on $\P^n$ 
as $p\to\infty$ with speed $O\big(\frac1p\log p\big)$.

\noindent
(ii) Let $U$ be an open subset of $\P^n$. Assume that 
$\widetilde{\varphi}$ 
is bounded on a neighborhood of $\overline U$. Then
\begin{equation}\label{e:weiconv1}
\|\varphi_p-\varphi\|_{L^1(U)}=O\left(\frac1p\log p\right)\,,
\:\:p\to\infty,
\end{equation}
and for every $1\leq k\leq n$ we have (not necessarily with 
speed estimate),
\begin{equation}\label{e:fsk1}
\frac{1}{p^k}\omega_p^k\to (\omega_{\FS}+\ddc\varphi)^k\,,
\:\:p\to\infty, \:\:\text{on $U$.}
\end{equation} 
(iii) Assume moreover that $\varphi$ is H\"older continuous on 
 a neighborhood of $\overline U$. Then
\begin{equation}\label{e5.12a}
\|\varphi_p-\varphi\|_{U,\infty}=O\left(\frac1p{\log p}\right)\,,
\:\:p\to\infty\,,
\end{equation}
 and \eqref{e:fsk1} holds with speed  
 $O\big(\frac1p\log p\big)$. 
 
Hence
for $\sigma_{\infty}$-almost every sequence 
$([s_p^{(1)}],\ldots,[s_p^{(k)}])\in\P\big(R_p(\varphi)\big)^k$ 
of $k$-tuples of homogeneous polynomials we have as $p\to\infty$,
\begin{equation}\label{e5.13a}
\frac{1}{p^k}\big[s_p^{(1)}=\ldots=s_p^{(k)}=0\big]
\to (\omega_{\FS}+\ddc\varphi)^k\,,
\:\:\text{on $U$ with speed $O\left(\frac1p\log p\right)$.}
\end{equation}
\end{corollary}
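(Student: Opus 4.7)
The plan is to reduce each assertion to the general results already proved in the paper by setting up the appropriate Hermitian metrics on $\mathcal{O}(1)$ and verifying the hypotheses. First I would put $h^L := h_{\FS}e^{-2\varphi}$, which lies in $\met^+(\mathcal{O}(1))$ since $\varphi \in PSH(\P^n,\omega_{\FS})$, with $c_1(\mathcal{O}(1),h^L) = \omega_{\FS}+\ddc\varphi \geq 0$. The identification \eqref{psh-met1} gives $R_p(\varphi)=H^0_{(2)}(\P^n,\mathcal{O}(p))$ as sets, and the $L^2$ inner product \eqref{herm_prod} provides the Hilbert space structure, so the probability spaces $\Omega_1(\mathcal{O}(1))$ and $\Omega_k(\mathcal{O}(1))$ used in Theorem \ref{th_eq_speed} and Corollary \ref{cor_abstract_2} coincide with those formed from $\P(R_p(\varphi))^k$.

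Next I would introduce the auxiliary metric $\widetilde{h}^L := h_{\FS}e^{-2\widetilde{\varphi}}$. The hypothesis $(1-\varepsilon)\omega_{\FS}+\ddc\widetilde{\varphi}\geq 0$ rewrites as
\begin{equation*}
c_1(\mathcal{O}(1),\widetilde{h}^L) = \omega_{\FS}+\ddc\widetilde{\varphi} \geq \varepsilon\,\omega_{\FS},
\end{equation*}
so $\widetilde{h}^L$ is a strictly positively curved singular Hermitian metric. The condition $\varphi \geq \widetilde{\varphi}$ pointwise is equivalent to $h^L \leq \widetilde{h}^L$, which is the domination hypothesis appearing in Theorem \ref{th_eq_speed}(i). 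Since $\mathcal{O}(1)$ is ample (hence big), the base manifold $\P^n$ is K\"ahler, and all the assumptions of Theorem \ref{th_eq_speed}(i) are met; applying it with $X=\P^n$, $\omega_X=\omega_{\FS}$ yields assertion (i) directly.

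For assertion (ii), I would invoke Corollary \ref{cor_cv_holder2}(i) in the same setting: the global weight of $\widetilde{h}^L$ relative to the smooth reference $h^L_0 = h_{\FS}$ is precisely $\widetilde{\varphi}$, so the assumption that $\widetilde{\varphi}$ is bounded on a neighborhood of $\overline{U}$ is exactly the hypothesis of that corollary. This delivers \eqref{e:weiconv1} and the wedge-power convergence \eqref{e:fsk1}, noting that $c_1(\mathcal{O}(1),h^L)^k = (\omega_{\FS}+\ddc\varphi)^k$ is well-defined on $U$ via \cite{BT82} since $\varphi$ and the $\varphi_p$ are locally bounded there (a consequence of $\varphi \geq \widetilde{\varphi}$ with $\widetilde{\varphi}$ bounded).

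For assertion (iii), the H\"older assumption on $\varphi$ near $\overline{U}$ activates Corollary \ref{cor_cv_holder2}(ii), giving the uniform estimate \eqref{e5.12a} and the speed $O(\frac{1}{p}\log p)$ for $\frac{1}{p^k}\omega_p^k \to (\omega_{\FS}+\ddc\varphi)^k$ on $U$. The convergence \eqref{e5.13a} for common zeros of random $k$-tuples of sections then follows from Corollary \ref{cor_abstract_2} applied with $(F,h^F)$ trivial and $\lambda_p = (m+2)c\log p$, combining the speed $O(\lambda_p/p) = O(\frac{1}{p}\log p)$ given there with the same speed for the Fubini-Study currents. No new obstacle arises: the entire proof is an unpacking of prior results, and the only point requiring care is checking that the $PSH(\P^n,\omega_{\FS})$-to-$\met^+(\mathcal{O}(1))$ dictionary correctly matches the domination $h^L \leq \widetilde{h}^L$ with $\varphi \geq \widetilde{\varphi}$ and the curvature positivity $\geq\varepsilon\omega_{\FS}$ with the stated $(1-\varepsilon)\omega_{\FS}+\ddc\widetilde{\varphi}\geq 0$ condition.
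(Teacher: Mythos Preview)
Your proposal is correct and follows exactly the approach the paper indicates: the paper states this corollary as an immediate consequence of Theorem~\ref{th_eq_speed} and Corollary~\ref{cor_cv_holder2}, and you have carefully verified the dictionary between the $PSH(\P^n,\omega_{\FS})$ hypotheses and the metric hypotheses required there. No further argument is needed.
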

Theorem \ref{th_cv_holder} and Corollary \ref{cor_cv_holder} 
imply the following.
\begin{corollary}\label{cor_cv_holder4}
Let $\varphi$ be a H\"older continuous function on $\P^n$.
Then: 

(i) The equilibrium weight $\varphi_{\eq}$ is 
continuous on $\P^n$ and the global Fubini-Study
weights $\varphi_p$ given by \eqref{e:21} converge to 
$\varphi_{\eq}$ uniformly with speed $O\big(\frac1p{\log p}\big)$.

(ii) 
For any $1\leq k\leq n$ we have 
$\frac{1}{p^k}\omega_p^k\to\omega_{\eq}^k$ on $\P^n$ as 
$p\to\infty$ with speed $O\big(\frac1p\log p\big)$. 

(iii)  Let $1\leq k\leq n$.  
For $\sigma_{\infty}$-almost every sequence 
$([s_p^{(1)}],\ldots,[s_p^{(k)}])\in\P\big(R_p(\varphi)\big)^k$,
\begin{equation}\label{e5.14a}
\frac{1}{p^k}\big[s_p^{(1)}=\ldots=s_p^{(k)}=0\big]
\to \omega_{\eq}^k\,,
\:\:\text{on $\P^n$ with speed $O\left(\frac1p\log p\right)$.}
\end{equation}

\end{corollary}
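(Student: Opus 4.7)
The plan is to apply Theorem \ref{th_cv_holder} and Corollary \ref{cor_cv_holder} directly, with the choice $X=\P^n$ endowed with the Fubini--Study form $\omega_{\FS}$, the ample line bundle $L=\mathcal{O}(1)$, and the smooth reference metric $h^L_0=h_{\FS}$. Under this choice the Chern form $\alpha=c_1(\mathcal{O}(1),h_{\FS})=\omega_{\FS}$ is a K\"ahler form, so the hypotheses of Theorem \ref{th_cv_holder} are satisfied; the H\"older continuous function $\varphi$ in the statement of Corollary \ref{cor_cv_holder4} plays the role of the H\"older global weight of the singular metric $h^L=h_{\FS}\, e^{-2\varphi}$.

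The first thing I would verify is that the objects entering the abstract statements coincide with those used in the preceding example. Under the identification above, the space $H^0_{(2)}(\P^n,\mathcal{O}(p))$ equipped with the inner product \eqref{herm_prod} for the trivial choice of $F$ is precisely $R_p(\varphi)$ as defined in the example; consequently, the multi-projective measure $\sigma_p$ on $\P(R_p(\varphi))^k$ is the one used in \eqref{probsp_2}, and the product measure $\sigma_\infty$ is the same. The Fubini--Study currents $\omega_p$ from \eqref{fs} and the global Fubini--Study weights $\varphi_p$ from \eqref{e:21} then agree with the objects appearing in Theorem \ref{th_cv_holder} and in the example.

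Once this bookkeeping is in place the three items follow immediately. Item (i) is exactly the first conclusion of Theorem \ref{th_cv_holder}: the equilibrium weight $\varphi_{\eq}$ is continuous on $\P^n$ and the uniform estimate $\|\varphi_p-\varphi_{\eq}\|_\infty=O(\frac{1}{p}\log p)$ holds. Item (ii) is the last assertion of the same theorem, namely $\frac{1}{p^k}\omega_p^k\to\omega_{\eq}^k$ on $\P^n$ with speed $O(\frac{1}{p}\log p)$. Item (iii) is precisely Corollary \ref{cor_cv_holder} applied in this setting, which gives the almost sure equidistribution of common zeros to $\omega_{\eq}^k$ with the claimed speed.

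There is no substantive obstacle: the real work has already been done in proving Theorem \ref{th_cv_holder} (via the submean inequality giving the upper bound on $\varphi_p$ and the Ohsawa--Takegoshi peak-section construction giving the lower bound) and in proving Corollary \ref{cor_cv_holder} (which combines Theorem \ref{th_cv_holder} with the abstract deviation estimate of Corollary \ref{cor_abstract_2}). The only point requiring care is the straightforward translation between the abstract notation $H^0_{(2)}(X,L^p)$ and the concrete realization $R_p(\varphi)$ of degree-$p$ homogeneous polynomials endowed with the weighted $L^2$ scalar product.
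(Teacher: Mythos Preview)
Your proposal is correct and matches the paper's own treatment exactly: the paper simply states that Corollary~\ref{cor_cv_holder4} follows from Theorem~\ref{th_cv_holder} and Corollary~\ref{cor_cv_holder}, and your specialization $X=\P^n$, $L=\mathcal{O}(1)$, $h^L_0=h_{\FS}$, $\alpha=\omega_{\FS}$ is precisely the intended one. The identification $H^0_{(2)}(\P^n,\mathcal{O}(p))=R_p(\varphi)$ you spell out is the only bookkeeping needed, and it is already set up in the example preceding the corollary.
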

\end{example}

\end{document}